\newtheorem{thm}{Theorem}[section] 
\newtheorem{lemma}[thm]{Lemma}
\newtheorem{prop}[thm]{Proposition}
\newtheorem{cor}[thm]{Corollary}
\theoremstyle{definition}
\newtheorem{definition}[thm]{Definition}
\newcommand{\C}{\mathbb{C}}
\newcommand{\R}{\mathbb{R}}
\numberwithin{equation}{section}
\begin{document} 

\title[Higgs bundles and flat connections on Sasakian manifolds]{Higgs bundles and 
flat connections over compact Sasakian manifolds}

\author[I. Biswas]{Indranil Biswas}

\address{School of Mathematics, Tata Institute of Fundamental
Research, Homi Bhabha Road, Mumbai 400005, India}

\email{indranil@math.tifr.res.in}

\author[H. Kasuya]{Hisashi Kasuya}

\address{Department of Mathematics, Graduate School of Science, Osaka University, Osaka,
Japan}

\email{kasuya@math.sci.osaka-u.ac.jp}

\subjclass[2010]{53C43, 53C07, 32L05, 58E15}

\keywords{Higgs bundle, harmonic map, Yang-Mills equation, flat connection, basic cohomology
classes.}

\begin{abstract}
Given a compact K\"ahler manifold $X$, there is an equivalence of categories between the
completely reducible flat vector bundles on $X$ and the polystable Higgs bundles $(E,\,\theta)$
on $X$ with $c_1(E)\,=\, 0\,=\, c_2(E)$ \cite{SimC}, \cite{Cor}, \cite{UY}, \cite{DonI}. We
extend this equivalence of categories to the context of compact Sasakian manifolds. We prove that
on a compact Sasakian manifold, there is an equivalence between the category of semi-simple flat
vector bundles on it and the category of polystable basic Higgs bundles on it with trivial first
and second basic Chern classes. We also prove that any stable basic Higgs bundle over a compact
Sasakian manifold admits a basic Hermitian metric that satisfies the Yang--Mills--Higgs equation.
\end{abstract}

\maketitle

\tableofcontents

\section{Introduction}

Let $X$ be a compact K\"ahler manifold equipped with a K\"ahler form $\omega$, and let
$E$ be a holomorphic vector bundle over $X$. Uhlenbeck and Yau proved that $E$ is polystable
with respect to $\omega$ if and only if $E$ admits a Hermitian metric that satisfies
the Hermite--Einstein equation defined using $\omega$ \cite{UY}. This result was proved
earlier by Donaldson under the extra assumptions that $X$ is a complex projective
manifold and $\omega$ represents a rational cohomology class \cite{DonI}. As a consequence of
these theorems of Uhlenbeck--Yau and Donaldson, a holomorphic vector
bundle on $X$ admits a flat unitary connection if and only if $E$ is polystable with
$c_1(E)\,=\, 0\,=\, c_2(E)$.

Hitchin proved that a Higgs bundle $(E,\, \theta)$ of rank two and degree zero
on a compact Riemann surface $X$ admits a Hermitian metric that solves the Yang-Mills equation if
and only if $(E,\, \theta)$ is polystable \cite{Hi}. This implies that a polystable Higgs
bundle $(E,\, \theta)$ of rank two and degree zero produces a completely reducible flat principal
$\text{GL}(2,{\mathbb C})$--bundle (also called a principal $\text{GL}(2,{\mathbb C})$--bundle
with a semi-simple flat connection) on $X$. Donaldson proved that a principal $\text{GL}(2,
{\mathbb C})$--bundle on $X$ with a completely reducible
flat connection admits a harmonic metric \cite{Do3}; this produces the
inverse map from the completely reducible flat $\text{GL}(2,{\mathbb C})$--bundles on $X$ to the
polystable Higgs bundles of rank two and degree zero on $X$.

Simpson and Corlette extended the above bijective correspondence to the more general context
of bundles over compact K\"ahler manifolds \cite{SimC}, \cite{Cor}. More precisely, Simpson and Corlette
proved that for any compact K\"ahler manifold $X$, there is an equivalence of categories between
the completely reducible (also called semi-simple) flat vector bundles on $X$ and the polystable Higgs bundles $(E,\,
\theta)$ on $X$ with $c_1(E)\,=\, 0\,=\, c_2(E)$; this result is explained in detail in \cite{SimL}.

We recall that the contact manifolds constitute the odd dimensional counterpart of the 
symplectic manifolds. For example, while the total space of the cotangent bundle of a smooth 
manifold $X$ is a typical local model of a symplectic manifold, the local model of a contact 
manifold is the total space of the projective bundle $P(T^*M)$. In a similar vein, compact 
Sasakian manifolds can be thought of as the odd dimensional counterparts of the compact 
K\"ahler manifolds. A compact regular Sasakian manifold is in fact the unit circle bundle 
inside a holomorphic Hermitian line bundle of positive curvature on a complex projective 
manifold. More generally, a compact quasi-regular Sasakian manifold is the unit circle bundle 
inside a holomorphic Hermitian line bundle of positive curvature over a complex projective 
orbifold. However, the global structures of more general compact Sasakian manifolds, known as 
irregular Sasakian manifolds, do not admit any such explicit description.

It may be mentioned that Sasakian manifolds were introduced by Sasaki \cite{Sa}, \cite{SH}, 
which explains the terminology. During the last twenty years there has been a very substantial 
increase of the interests in Sasakian manifolds, accompanied by a flurry of research activities 
(see \cite{BoG} and references therein). It is evident from these references that a large part 
of this recent investigations into Sasakian manifolds were carried out by C. Boyer and K. 
Galicki. Another aspect contributing to this recent activities in Sasakian manifolds is the 
discovery of their relevance in the string theory in theoretical physics. This was initiated in 
the works of J. Maldacena \cite{Ma}. For further developments in this direction, see 
\cite{GMSW}, \cite{MSY}, \cite{MS} and references therein.

Our aim here is to extend, to the context of Sasakian manifolds, the earlier mentioned 
equivalence of categories between the completely reducible flat vector bundles on a compact 
K\"ahler manifold and the polystable Higgs bundles $(E,\, \theta)$ with $c_1(E)\,=\, 0\,=\, 
c_2(E)$.

The main result proved here states as follows (see Theorem \ref{thml}).

\begin{thm}\label{thm0}
For a compact Sasakian manifold $M$, there is an equivalence between the category of semi-simple
flat bundles over $M$ and the category of polystable basic Higgs bundles over $M$
with trivial first and second basic Chern classes.
\end{thm}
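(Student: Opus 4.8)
The plan is to reduce the Sasakian statement to the Kähler case of Simpson–Corlette by the standard "transverse Kähler" mechanism, via the following route.

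The plan is to set up the correspondence on both sides starting from the transverse Kähler geometry of a compact Sasakian manifold $M$. Recall that the Reeb foliation $\mathcal{F}_\xi$ carries a transverse Kähler structure; basic objects (basic forms, basic cohomology, basic vector bundles, basic Higgs fields, basic Chern classes, basic Hermitian metrics) are exactly the foliated objects that descend, formally, to the leaf space. The first step is to fix the language: a basic Higgs bundle $(E,\theta)$ is a holomorphic vector bundle in the transverse holomorphic structure together with $\theta\in H^0(M,\, \mathrm{End}(E)\otimes \Omega^{1,0}_B)$ satisfying $\theta\wedge\theta=0$, and (basic) (semi)stability is defined using the transverse Kähler form $\omega_T$ in the usual slope inequality, restricted to basic (hence $\xi$-invariant and $\xi$-horizontal) coherent subsheaves. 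Likewise, semi-simple flat bundles over $M$ should be taken in the appropriate sense — flat bundles whose monodromy representation of $\pi_1(M)$ is completely reducible; one checks (using that on a compact Sasakian manifold the Reeb flow is isometric, so harmonic forms and harmonic metrics can be averaged over the closure of the flow) that these can be represented by basic flat connections.

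With this dictionary in place, the plan is to run the two halves of the correspondence. For the direction from basic polystable Higgs bundles with $c_1^B(E)=0=c_2^B(E)$ to semi-simple flat bundles: first invoke the last assertion quoted in the abstract — the Yang–Mills–Higgs/Hermite–Einstein existence theorem for stable basic Higgs bundles — to equip a polystable $(E,\theta)$ with a basic Hermitian–Yang–Mills metric $h$; then, exactly as in Simpson's argument, the vanishing of $c_1^B$ and $c_2^B$ together with the Bogomolov–Lübke–type inequality in basic cohomology (which holds because $\omega_T^{n}$ and $\int_M \alpha\wedge\eta\wedge\omega_T^{n-1}$ give a genuine positive-definite pairing, $\eta$ the contact form) forces the Hitchin–Simpson connection $D=\nabla_h+\theta+\theta^{*_h}$ to be flat; its monodromy is semi-simple because $h$ solves the Hermite–Einstein equation. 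Conversely, given a semi-simple flat bundle, represent it by a basic flat connection and apply Corlette's existence theorem for harmonic metrics in the basic/transverse setting — this is available because the Reeb flow is isometric, so Corlette's heat-flow (or the direct variational argument) preserves basicness — to produce a harmonic metric, whose $(0,1)$-part of the connection and whose $(1,0)$-part of the "second fundamental form" give a basic holomorphic structure and a basic Higgs field; harmonicity plus the transverse Kähler identities give $\theta\wedge\theta=0$ and polystability with vanishing basic Chern classes. Finally one checks these two constructions are mutually inverse and functorial, giving the claimed equivalence of categories.

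The main obstacle, I expect, is not the formal transcription but making the analytic inputs genuinely work in the foliated setting: one must check that Uhlenbeck–Yau/Simpson's Hermite–Einstein existence proof and Corlette's harmonic-metric existence proof go through for basic objects with basic estimates — i.e. that the relevant elliptic operators restricted to basic sections are still Fredholm with the right index, that the $L^p$ and $C^{k,\alpha}$ estimates hold with basic constants, and that the maximum principle / Donaldson functional arguments respect basicness. The cleanest way to handle this is to work $\xi$-equivariantly: average over the (compact) closure of the Reeb flow in the isometry group, so that a solution obtained by the non-basic theory is automatically basic, and the transverse ellipticity of the Laplacian on basic forms (El Kacimi-Alaoui's theory) supplies the needed analysis. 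A secondary subtlety is the precise definition of $c_2^B$ and the Bogomolov inequality with the mixed pairing against $\eta$; care is needed because $\omega_T$ is only a transverse, not an honest, Kähler form, but the wedge $\eta\wedge\omega_T^{\,n}$ is a volume form on $M$ and this is exactly what rescues the integration-by-parts identities.
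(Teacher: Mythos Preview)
Your overall architecture matches the paper's: produce a Hermitian--Yang--Mills metric on a polystable basic Higgs bundle, then use vanishing of $c_{1,B}$ and $c_{2,B}$ together with a basic Bogomolov-type identity to force flatness of $D^{h}=\nabla^{h}+\theta+\theta^{*_h}$; and in the other direction, pass from a semi-simple flat bundle through a harmonic metric to a basic Higgs bundle. For the first half the paper proceeds essentially as you outline (the heat flow is run on the K\"ahler cone $M\times[1,1+\epsilon]$ with Neumann boundary conditions, and basicness of the solution is extracted from uniqueness plus the fact that the Reeb flow acts by K\"ahler automorphisms of the cone---very close to your ``work $\xi$-equivariantly'' suggestion).

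The substantive divergence, and the place where your write-up has a gap, is the flat $\Rightarrow$ Higgs direction. The paper does \emph{not} run Corlette's argument inside the basic category, nor does it average. It applies Corlette's theorem on the full compact Riemannian manifold $(M,g_\eta)$ and then proves directly, as a separate result (Theorem~\ref{harmsa}), that any harmonic metric on a Sasakian manifold automatically satisfies $\phi_h(\xi)=0$, hence is basic. The proof uses the Tanaka--Webster connection and the Sasakian identities $T^{TW}(\xi,\cdot)=0$ and $R^{TW}(\xi,\cdot)=0$ in a Dirac-operator computation \`a la Petit; the authors explicitly flag this as a feature specific to Sasakian geometry, not available for an arbitrary transversely K\"ahler foliation.

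Your alternative---``average over the closure of the Reeb flow so that a solution obtained by the non-basic theory is automatically basic''---does not work as stated, because the harmonic-metric equation $(\nabla^{h})^{\ast}\phi_{h}=0$ is nonlinear in $h$, so a torus-average of harmonic metrics is not harmonic. What \emph{would} work is a uniqueness argument: the Reeb flow lifts to the flat bundle (parallel transport) and preserves both $g_\eta$ and $\nabla^{E}$, hence acts on the set of harmonic metrics; for an irreducible flat bundle the harmonic metric is unique up to a positive scalar, so the flow acts by $h\mapsto e^{\lambda t}h$, and a determinant normalization (or compactness of the closure of the flow in $\mathrm{Isom}(M)$) forces $\lambda=0$. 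If you intend this, say so and carry it out---including the reduction of the semi-simple case to isotypic pieces. That route is genuinely different from the paper's and arguably more portable, but as written your ``averaging'' sentence is the missing step.

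A minor point: flat bundles on $M$ are already basic (locally constant transition functions are trivially leafwise constant), so there is nothing to ``represent by a basic flat connection''; the issue is entirely whether the harmonic \emph{metric} is basic.
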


In \cite{BM}, the same question was addressed for the special case of quasi-regular Sasakian
manifolds. As mentioned before, quasi-regular Sasakian manifolds are the unit circle bundles
of holomorphic Hermitian line bundles of positive curvature over any complex projective
orbifold. Using this fact together with a result of geometric 
group theory \cite[p.~3492, Lemma 2.1]{BM}, the question for quasi-regular Sasakian manifolds
actually reduces to that for complex projective manifolds. In view of the known results on Higgs
bundles over complex projective manifolds, the 
differential geometric and analytical investigations needed for the proof of
Theorem \ref{thm0} could entirely be avoided in \cite{BM}.

To prove Theorem \ref{thm0} we
establish analogues of the theory of harmonic metrics on flat bundles and 
the theory of Hermitian-Yang-Mills metrics on Higgs bundles on compact K\"ahler manifolds. Just as 
Corlette and Simpson proved for compact K\"ahler manifolds (\cite[Theorem 5.1]{Cor}, 
\cite[Lemma 1.1]{SimL}), we obtain the characterization of harmonic flat bundles over compact 
Sasakian manifolds in terms of transversally holomorphic geometry (see Theorem \ref{thm1}). It
may be mentioned that this
is facilitated by a special feature of Sasakian geometry (see Theorem \ref{harmsa}) which would
not hold for a general transversally K\"ahler Geometry. 
This produces the functor that we are seeking in Theorem \ref{thm0} from the semi-simple flat bundles
to the basic Higgs bundles.

The proof of the opposite direction in Theorem \ref{thm0} is inspired by a recent work of
D. Baraglia and P. Hekmati \cite{BH}. Defining the stable and polystable basic Higgs bundles over
compact Sasakian manifolds, we prove for Sasakian manifolds the following analog of
\cite[Theorem 1]{SimC} proved in the K\"ahler setting (see Theorem \ref{HigHe}):

\begin{thm}\label{HigHe2}
For a stable basic Higgs bundle $(E, \,\theta)$ over a compact Sasakian manifold $(M,\, (T^{1,0},
\, S, \,I),\, (\eta, \,\xi))$, there exists a basic Hermitian metric $h$ on $E$ such that
\[\Lambda R^{D^{h} \perp}\, =\,0\, ,
\]
where $R^{D^{h} \perp}$ is the trace-free part of the curvature $R^{D^{h}}$ of the
canonical connection $D^{h}$ associated to $h$.
\end{thm}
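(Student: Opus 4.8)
\textbf{Proof strategy for Theorem \ref{HigHe2}.}

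The plan is to adapt Simpson's continuity-method proof of the existence of Hermite--Yang--Mills metrics on stable Higgs bundles to the transversally holomorphic (basic) setting over a compact Sasakian manifold $M$. Fix a background basic Hermitian metric $h_0$ on $E$; any other basic metric can be written as $h\,=\,h_0 f$ for a basic, positive self-adjoint endomorphism $f$. The Sasakian structure provides the transverse K\"ahler form on the foliation by the Reeb field $\xi$, and integration against the transverse volume form together with the basic Laplacian gives the required elliptic theory. Because all the data ($E$, $\theta$, $h_0$) are basic, i.e.\ constant along $\xi$, the relevant differential operators descend to operators on basic forms, and one obtains the same Weitzenb\"ock and Bochner-type identities as in the K\"ahler case; the key structural input here is Theorem \ref{harmsa}, which ensures that the basic Higgs field interacts with the transverse complex structure in the same way $\theta$ does on a K\"ahler manifold.

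The main steps, in order, are as follows. First I would set up the Donaldson-type functional $M(h_0,h)$ on the space of basic Hermitian metrics, together with the associated heat flow or the perturbed equation $\Lambda R^{D^{h_\varepsilon}\perp}\,=\,-\varepsilon\log f_\varepsilon$ parametrized by $\varepsilon>0$, and show using the basic elliptic theory that a smooth basic solution $h_\varepsilon$ exists for each $\varepsilon$ (this is the solvability step, via the implicit function theorem and continuity method, exactly as in \cite{SimC}). Second, I would establish the $C^0$ a priori estimate: one controls $\sup|\log f_\varepsilon|$ by $\int_M|\log f_\varepsilon|$ times the norm of $\Lambda R^{D^{h_\varepsilon}\perp}$, using the basic Bochner formula and a Moser iteration argument on basic functions. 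Third, assuming the contrary—that $\sup|\log f_\varepsilon|\to\infty$—I would rescale and extract a limiting weakly holomorphic basic sub-bundle (a basic coherent analytic subsheaf), exactly as Simpson does via the Uhlenbeck--Yau trick; here one needs the theory of basic/transverse coherent sheaves and a basic version of the fact that a weakly holomorphic projection defines an analytic subsheaf. Fourth, I would compute the transverse degree of this destabilizing subsheaf and show it contradicts the stability hypothesis on $(E,\theta)$—using that $\theta$ must preserve the subsheaf in the limit, so it is a Higgs subsheaf, contradicting basic-Higgs-stability. Finally, with the uniform $C^0$ bound in hand, bootstrapping via basic elliptic regularity gives $C^\infty$ bounds on a subsequence, and letting $\varepsilon\to 0$ produces the desired basic Hermitian--Yang--Mills metric $h$ with $\Lambda R^{D^h\perp}\,=\,0$.

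I expect the main obstacle to be the third step: constructing the destabilizing \emph{basic} subsheaf and proving it is coherent in the transverse sense. In the smooth K\"ahler case this rests on Uhlenbeck--Yau's regularity theorem for weakly holomorphic $L^2_1$ projections; transplanting it requires a foliated version, which one must either extract from the transverse elliptic theory of basic Dolbeault operators on $M$ or reduce to local transverse K\"ahler charts where the foliation is a product and the classical result applies, then patch. A secondary technical point is verifying that the limiting subsheaf is $\theta$-invariant; this follows from the same weak-limit argument as in \cite{SimC} once the basic Sobolev and Rellich embeddings are available, which they are since $M$ is compact and the transverse metric is smooth. The remaining analytic steps (the functional's convexity, the $C^0$ estimate, elliptic bootstrapping) are routine adaptations once one systematically replaces ``form'' by ``basic form'' and ``Laplacian'' by ``basic Laplacian,'' using that the transverse Hodge theory on a compact Sasakian manifold behaves formally like Hodge theory on a compact K\"ahler manifold.
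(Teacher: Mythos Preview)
Your overall architecture matches the paper's: set up Donaldson's functional on basic metrics, obtain the $C^0$ estimate (the paper's Proposition \ref{ESTm}) by contradiction with stability via a destabilizing basic subsheaf, and bootstrap. The paper also relies on exactly the transverse Uhlenbeck--Yau regularity result you flag as the main obstacle, quoting it from Baraglia--Hekmati \cite{BH} as Theorem \ref{TrUY}; so your anticipated difficulty is real and is resolved by citing that external result rather than reproving it.

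The substantive difference is in how the flow is produced. You propose to run the heat or continuity equation directly on $M$ using basic elliptic theory. The paper instead passes to the K\"ahler cone $C(M)=M\times\R^{>0}$, where the pullback $(\widetilde E,\widetilde\theta)$ is an ordinary Higgs bundle on a genuine K\"ahler manifold, and solves the standard heat equation on the compact manifold with boundary $M\times[1,1+\epsilon]$ with Neumann boundary conditions, invoking Hamilton \cite{Ham} and Donaldson \cite{Don} (Theorem \ref{Ellp}). Since the $1$-parameter group $A$ generated by $\xi$ acts by K\"ahler automorphisms of $C(M)$ and fixes the initial data, uniqueness forces the solution to be $A$-invariant, hence basic; restricting to the slice $M\times\{1\}$ via the Neumann condition yields the basic heat flow on $M$. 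This trick lets the paper import an off-the-shelf parabolic existence theorem rather than redevelop short- and long-time existence in the transverse setting; your direct route should also work but requires that redevelopment.

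One correction: Theorem \ref{harmsa} plays no role in this direction of the correspondence. It concerns harmonic metrics on flat bundles (showing $\phi_h(\xi)=0$) and is used only for the flat-to-Higgs half, not here.
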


Theorem \ref{HigHe2} implies the following Bogomolov--Miyaoka type inequality (see Corollary 
\ref{cor1}):

\begin{cor}
Let $(E, \,\theta)$ be a polystable basic Higgs bundle of rank $r$ over a compact
Sasakian manifold $(M,\, (T^{1,0},
\, S, \,I),\, (\eta, \,\xi))$ with $\dim M\,=\, 2n+1$. Then
$$
\int_{M}\left(2c_{2, B_{\mathcal F_{\xi}}}(E) -
\frac{r-1}{r}c_{1, B_{\mathcal F_{\xi}}}(E)^2\right)(d\eta)^{n-2}\wedge\eta \, \geq\, 0\, ,
$$
where $c_{i, B_{\mathcal F_{\xi}}}(E)$ is the $i$-th basic Chern class of $E$.
If the above inequality is an equality, 
then $R^{D^{h} \perp}\, =\,0$.
\end{cor}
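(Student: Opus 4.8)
The plan is to feed the Yang--Mills--Higgs metric provided by Theorem \ref{HigHe2} into basic Chern--Weil theory, and then to invoke the pointwise Hodge-theoretic estimate that underlies the Bogomolov inequality in the K\"ahler case (see \cite{SimC}), transplanted to the transverse K\"ahler geometry of the Sasakian manifold $M$. First I would produce the metric. A polystable basic Higgs bundle $(E,\,\theta)$ is an orthogonal direct sum $\bigoplus_i(E_i,\,\theta_i)$ of stable basic Higgs bundles all of the same basic slope. Applying Theorem \ref{HigHe2} to each summand, rescaling each resulting metric by a basic conformal factor so that the determinant curvatures are matched up (this does not change the trace-free part of the curvature), and forming the orthogonal direct sum, one obtains a basic Hermitian metric $h$ on $E$ with
\[
\Lambda R^{D^h\perp}\,=\,0\, ,
\]
where $\Lambda$ denotes contraction against the transverse K\"ahler form $\tfrac12\,d\eta$.

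Next I would pass to Chern--Weil forms. The basic connection $D^h$ represents the basic Chern classes of $E$ by the standard invariant polynomials in its curvature $R^{D^h}$; since $\mathrm{tr}\,R^{D^h\perp}\,=\,0$, substituting $R^{D^h}\,=\,R^{D^h\perp}+\tfrac1r(\mathrm{tr}\,R^{D^h})\,\mathrm{Id}_E$ and expanding yields the identity of basic $4$-forms
\[
2c_{2,B_{\mathcal F_{\xi}}}(E)-\frac{r-1}{r}\,c_{1,B_{\mathcal F_{\xi}}}(E)^2\;=\;\frac{1}{4\pi^{2}}\,\mathrm{tr}\!\left(R^{D^h\perp}\wedge R^{D^h\perp}\right)
\]
up to a $d$-exact basic form. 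The exact term does not affect the integral in the statement: a $d$-exact basic $4$-form wedged with $(d\eta)^{n-2}\wedge\eta$ becomes, after an application of Stokes' theorem and using $\iota_{\xi}d\eta\,=\,0$, a top-degree form killed by the nowhere-vanishing Reeb field $\xi$, hence identically zero.

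It then remains to estimate $\mathrm{tr}(R^{D^h\perp}\wedge R^{D^h\perp})\wedge(d\eta)^{n-2}\wedge\eta$ pointwise. Set $\Psi\,:=\,R^{D^h\perp}$; then $\Lambda\Psi\,=\,0$, so the $(1,1)$-component of $\Psi$ is transversally primitive, and $\Psi$ satisfies the Hermitian symmetry with respect to $h$. The transverse analog of the pointwise K\"ahler identities for $\mathrm{End}(E)$-valued primitive $(1,1)$-forms shows that $\mathrm{tr}(\Psi\wedge\Psi)\wedge(d\eta)^{n-2}\wedge\eta$ is a pointwise non-negative multiple of the Riemannian volume form $(d\eta)^{n}\wedge\eta$, vanishing precisely where $\Psi$ vanishes (any $(2,0)$ and $(0,2)$ components of $\Psi$, being mutually $h$-adjoint, contribute a further non-negative term). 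Integrating over $M$ and combining with the previous step gives
\[
\int_{M}\left(2c_{2,B_{\mathcal F_{\xi}}}(E)-\frac{r-1}{r}\,c_{1,B_{\mathcal F_{\xi}}}(E)^2\right)(d\eta)^{n-2}\wedge\eta\;\geq\;0\, ,
\]
with equality only if $\Psi\,=\,R^{D^h\perp}\,\equiv\,0$.

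The analytic difficulty is already absorbed into Theorem \ref{HigHe2}; the remaining work is transverse K\"ahler geometry. The step I expect to be the main obstacle is the last one: establishing the transverse pointwise K\"ahler identity with the correct sign, i.e. showing within the basic Hodge theory of the compact Sasakian manifold that $\mathrm{tr}(R^{D^h\perp}\wedge R^{D^h\perp})\wedge(d\eta)^{n-2}\wedge\eta$ is non-negative with the stated vanishing locus. A secondary point is to confirm that the Chern--Weil forms built from $D^h$ represent the basic Chern classes and that two such representatives differ by a basic-exact form. Both are transverse counterparts of facts worked out by L\"ubke and by Simpson in the K\"ahler case, and should follow from the basic Hodge theory on compact Sasakian manifolds.
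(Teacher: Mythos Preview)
Your proposal is correct and follows essentially the same approach as the paper: the paper simply states that the inequality is derived from Theorem \ref{HigHe} in exactly the way Simpson derives \cite[Proposition 3.4]{SimC} from \cite[Theorem 1]{SimC}, and your write-up is precisely that argument carried over to the transverse K\"ahler setting (Yang--Mills--Higgs metric, Chern--Weil identity for the trace-free curvature, and the Hodge--Riemann bilinear relations for basic primitive forms). The auxiliary point you flag---that representatives of the basic Chern classes built from two basic connections differ by a basic-exact form, so the integral against $(d\eta)^{n-2}\wedge\eta$ is well defined---is exactly the basic Chern--Weil theory set up in Section~\ref{se5}, and the Riemann bilinear relations for basic forms are already invoked in the proof of Proposition~\ref{HtF}.
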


We mention that the fundamental groups of compact Sasakian manifolds were investigated in
\cite{Ch}, \cite{Ka}, \cite{BM2}.

\section{Strongly pseudo-convex CR manifolds and Sasakian manifolds}

Let $M$ be a $(2n+1)$-dimensional real smooth manifold. A {\em CR-structure} on $M$ is an 
$n$-dimensional complex sub-bundle $T^{1,0}$ of the complexified tangent bundle $TM_{\C}\,=\, 
TM\otimes_{\mathbb R} {\C}$ such that $T^{1,0}\cap \overline{T^{1,0}}=\{0\}$ and $T^{1,0}$ is 
integrable (i.e., the locally defined
sections of $T^{1,0}$ are closed under the Lie bracket operation). We 
shall denote $\overline{T^{1,0}}$ by $T^{0,1}$. For a CR-structure $T^{1,0}$ on $M$, there is 
a unique sub-bundle $S$ of rank $2n$ of the real tangent bundle $TM$ together with a
vector bundle homomorphism $I\,:\,S\,\longrightarrow\, S$ satisfying the conditions that
\begin{enumerate}
\item $I^{2}\,=\,-{\rm Id}_{S}$, and

\item $T^{1,0}$ is the $\sqrt{-1}$--eigenbundle of $I$.
\end{enumerate}

A $(2n+1)$-dimensional manifold $M$ equipped with a triple $(T^{1,0},\, S,\, I)$ as above is 
called a {\em CR-manifold}. A {\em contact CR-manifold} is a CR-manifold $M$ with a contact 
$1$-form $\eta$ on $M$ such that $\ker\eta\,=\,S$. Let $\xi$ denote the Reeb vector field for the 
contact form $\eta$. On a contact CR-manifold, the above homomorphism $I$ extends to entire 
$TM$ by setting $I(\xi)\,=\,0$.

\begin{definition}
A contact CR-manifold $(M,\, (T^{1,0},\, S,\, I),\, (\eta,\, \xi))$ is a {\em 
strongly pseudo-convex CR-manifold} if the Hermitian form $L_{\eta}$ on $S_x$ defined by 
$L_{\eta}(X,Y)\,=\,d\eta(X, IY)$, $X,\,Y\,\in\, S_{x}$, is positive definite for every point
$x\,\in\, M$. 
\end{definition}

Given any strongly pseudo-convex CR-manifold $(M, \,(T^{1,0},\, S, \,I),\, (\eta,\, \xi))$, there
is a canonical Riemann metric $g_{\eta}$ on $M$, called the {\em Webster metric}, which is defined to be
\[g_{\eta}(X,Y)\,:=\, L_{\eta}(X,Y)+\eta(X)\eta(Y)\, ,\ \ X,\,Y\,\in \,T_{x}M\, , \ x\, \in\, M\, .
\] 

\begin{prop}[{\cite{Tan}, \cite{Web}}]\label{TWco}
For a strongly pseudo-convex CR-manifold $$(M,\, (T^{1,0},\, S,\, I),\, (\eta, \,\xi))$$
there exists a unique affine connection $\nabla^{TW}$ on $TM$ such that the following hold:
\begin{enumerate}
\item $\nabla^{TW}(C^{\infty}(S))\,\subset\, A^{1}(M,\,S)$, where $A^{k}(M,\,S)$ is the space of 
differential $k$-forms on $M$ with values in the vector bundle $S$.

\item $\nabla^{TW}\xi\,=\,0$, $\nabla^{TW}I\,=\,0$, $\nabla^{TW}d\eta\,=\,0$,
$\nabla^{TW}\eta\,=\,0$ and $\nabla^{TW}g_{\eta}\,=\,0$.

\item The torsion $T^{TW}$ of the affine connection $\nabla^{TW}$ satisfies the equation
\[T^{TW} (X,Y)\,=\, -d\eta (X,Y)\xi
\]
for all $X,\,Y\,\in\, S_{x}$ and $x\,\in\, M$.
\end{enumerate}
\end{prop}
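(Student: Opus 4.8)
The plan is to reduce the problem to constructing a Hermitian connection on the contact distribution $S$ and then extending it by the rule $\nabla^{TW}\xi\,=\,0$. Since $TM\,=\,S\oplus\R\xi$ as a smooth vector bundle, condition (1) together with the first equation in (2) forces any candidate connection to respect this splitting: sections of $S$ stay in $S$ and $\xi$ is parallel. Restricted to $S$, the pair $(L_\eta,I)$ is a Hermitian structure, and $\nabla^{TW}g_\eta\,=\,0$ together with $\nabla^{TW}I\,=\,0$ say precisely that the restricted connection is unitary for this structure; the identities $\nabla^{TW}\eta\,=\,0$ and $\nabla^{TW}d\eta\,=\,0$ are then automatic, since $\eta\,=\,g_\eta(\xi,\cdot)$ and $d\eta(\cdot,\cdot)\,=\,L_\eta(\cdot,-I\cdot)$ are built from data already declared parallel. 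Thus the real content of (2) is unitarity on $(S,L_\eta,I)$ and the real content of (3) is a prescription for the torsion along horizontal directions. I would handle both existence and uniqueness by passing to a local admissible coframe $(\eta,\theta^1,\dots,\theta^n)$ in which $d\eta\,=\,i\,h_{\alpha\bar\beta}\,\theta^\alpha\wedge\theta^{\bar\beta}$ with $(h_{\alpha\bar\beta})$ Hermitian positive definite, the positivity being exactly the strong pseudo-convexity hypothesis.

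For uniqueness I would use a difference-tensor argument. If $\nabla$ and $\nabla'$ both satisfy (1)--(3), then $D\,:=\,\nabla-\nabla'\,\in\,A^1(M,\mathrm{End}\,TM)$ is tensorial and inherits: $D_X\xi\,=\,0$ and $D_X(S)\subset S$ from the splitting; $g_\eta(D_XY,Z)+g_\eta(Y,D_XZ)\,=\,0$ from metric compatibility, so each $D_X$ is $g_\eta$-skew; $D_XI\,=\,ID_X$ from $\nabla I\,=\,0\,=\,\nabla'I$; and, crucially, $D_XY\,=\,D_YX$ for $X,Y\in S$, because the two torsions agree on $S\times S$. The symmetry in the lower slots combined with skew-symmetry in the metric slots forces vanishing: for $X,Y,Z\in S$ one cycles
\begin{align*}
g_\eta(D_XY,Z)&=-g_\eta(D_XZ,Y)=-g_\eta(D_ZX,Y)=g_\eta(X,D_ZY)\\
&=g_\eta(X,D_YZ)=-g_\eta(D_YX,Z)=-g_\eta(D_XY,Z),
\end{align*}
so $D$ vanishes on $S\times S$.

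The one place where more care is needed, and what I expect to be the main obstacle, is the Reeb direction. The argument above does not constrain $D_\xi|_S$, which a priori is an arbitrary $g_\eta$-skew endomorphism of $S$ commuting with $I$, i.e. an arbitrary section of $\mathfrak{u}(S)$; equivalently, the pseudo-Hermitian torsion $T^{TW}(\xi,\cdot)$ is not pinned down by (1)--(3) alone. It is fixed by the normalization built into the Tanaka--Webster connection in the cited references, namely that this torsion anti-commute with $I$ (equivalently, that the torsion coefficients $A_{\alpha\beta}$ of the coframe be symmetric). Adjoining this purity condition, the same symmetric-plus-skew bookkeeping applied in the $\xi$-slot eliminates $D_\xi|_S$ and completes the uniqueness.

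For existence I would solve Webster's structure equations in the coframe: one seeks connection $1$-forms $\omega^\beta_\alpha$ and torsion coefficients $A^\beta_{\bar\gamma}$ with
\[
d\theta^\beta\,=\,\theta^\alpha\wedge\omega^\beta_\alpha+\eta\wedge A^\beta_{\bar\gamma}\theta^{\bar\gamma},\qquad dh_{\alpha\bar\beta}\,=\,\omega_{\alpha\bar\beta}+\omega_{\bar\beta\alpha},\qquad A_{\alpha\beta}\,=\,A_{\beta\alpha},
\]
indices being raised and lowered with $(h_{\alpha\bar\beta})$; expanding $\omega^\beta_\alpha$ in the coframe and matching coefficients, these linear equations have a unique solution, exactly as in the Levi--Civita case. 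Declaring $\nabla^{TW}Z_\alpha\,=\,\omega^\gamma_\alpha Z_\gamma$ on the dual frame and $\nabla^{TW}\xi\,=\,0$ defines a connection; the first structure equation yields the torsion condition (3) together with purity, the second yields $\nabla^{TW}L_\eta\,=\,0$, and preservation of $T^{1,0}$ yields $\nabla^{TW}I\,=\,0$. Finally I would verify that under a change of admissible coframe the $\omega^\beta_\alpha$ transform as connection forms (and $A$ as a tensor), so the locally defined connections agree on overlaps and glue to the global $\nabla^{TW}$; the remaining identities $\nabla^{TW}\eta\,=\,0$, $\nabla^{TW}d\eta\,=\,0$ and $\nabla^{TW}g_\eta\,=\,0$ then follow from the compatibilities already established.
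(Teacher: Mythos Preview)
The paper does not prove this proposition; it is simply quoted from the cited references \cite{Tan}, \cite{Web}, so there is no in-paper argument to compare against. Your approach via a local admissible coframe and Webster's structure equations is the standard one found in those sources.

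Your observation about the Reeb direction is correct and worth flagging: conditions (1)--(3) as literally stated do \emph{not} uniquely determine the connection, since they impose no constraint on $T^{TW}(\xi,\cdot)$. A perturbation $D$ with $D_X=0$ for $X\in S$ and $D_\xi\vert_S$ an arbitrary $g_\eta$-skew endomorphism of $S$ commuting with $I$ preserves all of (1)--(3), exactly as you verify. The Tanaka--Webster connection in the original references is pinned down by the additional normalization $T^{TW}(\xi,IX)\,=\,-I\,T^{TW}(\xi,X)$ for $X\in S$ (equivalently, symmetry of the coefficients $A_{\alpha\beta}$), which you correctly identify and adjoin. So strictly speaking the proposition as stated in the paper is slightly under-specified; your argument establishes the correct, completed statement.
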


The affine connection $\nabla^{TW}$ in Proposition \ref{TWco} is
called the {\em Tanaka--Webster connection}.

\begin{definition}
A Sasakian manifold is a strongly pseudo-convex CR-manifold $$(M, \,(T^{1,0},\, S,\, I),\,
(\eta,\, \xi))$$ satisfying any (all) of the following equivalent conditions:
\begin{enumerate}
\item $[\xi,\, A^{0}(M,\,S)]\,\subset\, A^{0}(M,\,S)$.

\item ${\mathcal L}_{\xi}\eta\,=\,0$ and ${\mathcal L}_{\xi}I\,=\,0$.

\item $T^{TW}(\xi,v)\,=\,0$ for all $v\, \in\, TM$.
\end{enumerate}
\end{definition}

For a Sasakian manifold, the curvature $R^{TW}$ of the Tanaka--Webster
connection $\nabla^{TW}$ satisfies the equation
\begin{equation}\label{eR}
R^{TW}(\xi,v)\,=\,0
\end{equation}
for all $v\, \in\, TM$. See \cite{BoG} for Sasakian manifolds.

\section{Harmonic metrics on Sasakian manifolds}

Let $M$ be a compact Riemannian manifold and $E$ a flat complex vector bundle over $M$
equipped with a flat connection $\nabla^{E}$.
For any Hermitian metric $h$ on $E$, we have a unique decomposition
\begin{equation}\label{code}
\nabla^{E}\,=\, \nabla^{h}+\phi_{h}
\end{equation}
such that $\nabla^{h}$ is a unitary connection and $\phi_{h}$ is a $1$-form on $M$ with values in
the self-adjoint endomorphisms of $E$ with respect to $h$.

\begin{thm}[\cite{Cor}]
If a flat complex vector bundle $(E,\, \nabla^{E})$ is semi-simple (meaning, direct sum of
irreducible flat connections), then there exists a Hermitian metric
(called the harmonic metric) $h$ on $E$ such that
\[
(\nabla^{h})^{\ast}\phi_{h}\,=\,0\, ,
\]
where $(\nabla^{h})^{\ast}$ is the formal adjoint operator of $\nabla^{h}$.
If the connection $\nabla^{E}$ is irreducible, then the harmonic metric is unique up to
multiplication by a constant scalar.
\end{thm}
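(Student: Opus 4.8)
The plan is to recast the statement as an existence-and-uniqueness result for equivariant harmonic maps into a nonpositively curved symmetric space, and then run the harmonic map heat flow, using semisimplicity precisely to exclude escape of energy to infinity.

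First I would set up the dictionary. Fix a base point and let $\rho\colon\pi_1(M)\to\mathrm{GL}(r,\C)$ be the monodromy representation of $(E,\nabla^E)$, so that $E\,=\,\widetilde M\times_\rho\C^r$ with $\widetilde M$ the universal cover. A Hermitian metric $h$ on $E$ is the same thing as a $\rho$-equivariant smooth map $f_h\colon\widetilde M\to N$, where $N\,=\,\mathrm{GL}(r,\C)/\mathrm{U}(r)$ is the complete, simply connected, nonpositively curved symmetric space of positive-definite Hermitian forms on $\C^r$. A direct computation identifies the pullback $f_h^\ast g_N$ of the symmetric metric with $\langle\phi_h,\phi_h\rangle_h$ up to a universal constant, so the Dirichlet energy $\mathcal E(h)\,:=\,\tfrac12\int_M|\phi_h|_h^2\,dV_g$ is well defined on the space $\mathcal H$ of Hermitian metrics and equals the energy of $f_h$. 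Its first variation, computed using $\nabla^E\nabla^E=0$ and the self-adjointness of $\phi_h$ in the decomposition $\nabla^E\,=\,\nabla^h+\phi_h$, gives $\delta\mathcal E(h)(u)\,=\,-\int_M\langle(\nabla^h)^\ast\phi_h,u\rangle_h\,dV_g$; hence the metrics satisfying $(\nabla^h)^\ast\phi_h=0$ are exactly the critical points of $\mathcal E$, equivalently the $\rho$-equivariant harmonic maps $\widetilde M\to N$.

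Next I would produce a critical point by the heat flow. Starting from any $h_0$, consider the equivariant harmonic map heat flow $h_t^{-1}\dot h_t\,=\,-(\nabla^{h_t})^\ast\phi_{h_t}$ (equivalently $\partial_t f_t=\tau(f_t)$). Short-time existence is standard parabolic theory, and since $N$ is complete with nonpositive sectional curvature the Eells--Sampson/Hamilton Bochner argument gives a uniform bound on the energy density and hence long-time existence; along the flow $\mathcal E(h_t)$ is non-increasing. There are then two possibilities: either the flow subconverges in $C^\infty_{\mathrm{loc}}$ (equivariantly) to a harmonic metric, in which case we are done, or the images $f_t$ leave every compact subset of $N$, i.e. energy escapes to infinity. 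The hard part is exactly this second alternative. One must show that if $(E,\nabla^E)$ is irreducible no such escape can occur: after renormalization the diverging flow selects a point of the ideal boundary of $N$ fixed by $\rho(\pi_1(M))$, equivalently a proper parabolic subgroup $P\subsetneq\mathrm{GL}(r,\C)$ whose flag is $\rho$-invariant, equivalently a nonzero proper $\nabla^E$-parallel subbundle of $E$, contradicting irreducibility. Making this precise is the technical heart of the argument; I would follow Corlette's integral identity (equivalently the center-of-mass/convexity estimate of Donaldson--Labourie), which bounds the distance travelled by the flow by the energy drop plus a term measuring failure of semisimplicity, so that a bounded energy drop forces bounded motion unless an invariant parabolic appears.

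Finally, for the general semisimple case I would decompose $(E,\nabla^E)\,=\,\bigoplus_j(E_j,\nabla^{E_j})$ into irreducible flat summands, apply the previous step to get a harmonic metric $h_j$ on each $E_j$, and set $h\,=\,\bigoplus_j h_j$: the diagonal blocks are harmonic and the off-diagonal blocks of $\phi_h$ vanish because a $\nabla^E$-parallel homomorphism between non-isomorphic irreducible summands is zero by Schur's lemma, while between isomorphic summands the metric may be chosen block-diagonal, so $(\nabla^h)^\ast\phi_h=0$. For uniqueness when $\nabla^E$ is irreducible, note that $t\mapsto\mathcal E(h_t)$ is convex along any geodesic of metrics, since $\mathcal H\hookrightarrow\mathrm{Maps}(\widetilde M,N)$ has nonpositively curved target; hence two harmonic metrics $h$ and $h'$ are joined by a geodesic of harmonic metrics whose velocity is a nonzero $\nabla^E$-parallel self-adjoint endomorphism of $E$, which by Schur's lemma for the irreducible $(E,\nabla^E)$ is a real constant multiple of $\mathrm{Id}_E$, whence $h'\,=\,c\,h$ for a positive constant $c$. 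This completes the plan.
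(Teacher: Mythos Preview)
The paper does not give a proof of this theorem; it is quoted verbatim as Corlette's result \cite{Cor} and then used as a black box input to Theorem~\ref{harmsa} and the rest of the argument. So there is no ``paper's own proof'' to compare against.

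That said, your outline is a correct and standard sketch of how Corlette's theorem is proved: translate Hermitian metrics into $\rho$-equivariant maps $\widetilde M\to \mathrm{GL}(r,\C)/\mathrm{U}(r)$, identify the harmonic-metric equation with the harmonic-map equation, run a variational or heat-flow argument using nonpositive curvature of the target, and use semisimplicity exactly to rule out escape to the ideal boundary (an invariant point at infinity would yield a $\rho$-invariant proper parabolic, hence a flat proper subbundle). The uniqueness argument via convexity of the energy along geodesics and Schur's lemma is also the standard one.

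One small cleanup in your reduction to the irreducible case: once you choose $h=\bigoplus_j h_j$ block-diagonal with respect to the flat decomposition $(E,\nabla^E)=\bigoplus_j(E_j,\nabla^{E_j})$, both $\nabla^h$ and $\phi_h$ are automatically block-diagonal, since they are determined pointwise by $\nabla^E$ and $h$, each of which respects the splitting. There are no off-diagonal blocks of $\phi_h$ to analyze, so Schur's lemma is not needed at that step; the harmonicity of $h$ follows immediately from the harmonicity of each $h_j$.
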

 
\begin{thm}\label{harmsa}
Let $M$ be a compact Sasakian manifold with a Reeb vector field $\xi$, and let $(E,\, \nabla^{E})$
be a semi-simple flat complex vector bundle over $M$.
Then,
\[\phi_{h}(\xi)\,=\,0
\]
for any harmonic metric $h$ on $E$ for the flat connection $\nabla^{E}$.
\end{thm}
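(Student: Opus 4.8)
The plan is to exploit the special structure of the Reeb flow on a Sasakian manifold, namely that $\xi$ is a Killing field generating isometries of the Webster metric, together with the defining property $\mathcal{L}_\xi \eta = 0$, $\mathcal{L}_\xi I = 0$. First I would record the standard Bochner/Weitzenb\"ock-type identity that a harmonic metric satisfies: since $(\nabla^h)^\ast \phi_h = 0$ and $\nabla^E = \nabla^h + \phi_h$ is flat, expanding $(\nabla^E)^2 = 0$ into its $\nabla^h$-unitary and self-adjoint parts gives the two equations $d^{\nabla^h}\phi_h = 0$ and $R^{\nabla^h} + \phi_h \wedge \phi_h = 0$ (this is the usual Corlette--Simpson pair). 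Contracting these appropriately and integrating against the volume form of $g_\eta$, one obtains the Corlette identity $\int_M |\,\nabla^h \phi_h\,|^2 = 0$ type conclusion — more precisely that $\nabla^h \phi_h = 0$, i.e., $\phi_h$ is parallel for the unitary connection $\nabla^h$; this is exactly Corlette's theorem that a harmonic bundle over a compact K\"ahler (here Sasakian) base has $\phi_h$ covariantly constant. I would either cite this or reprove it quickly via the Weitzenb\"ock formula, being careful that the Sasakian (transversally K\"ahler) structure is what makes the cross terms vanish.

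Granting $\nabla^h \phi_h = 0$, the next step is to study the endomorphism-valued function $\phi_h(\xi)$. Because $\xi$ is a Killing field for $g_\eta$ and preserves the CR and contact data, the flow of $\xi$ acts on everything compatibly; in particular $\mathcal{L}_\xi \phi_h$ makes sense and, using $\nabla^h \phi_h = 0$ together with the fact that $\nabla^{TW}\xi = 0$ (Proposition \ref{TWco}(2)) and the relation between $\nabla^h$ and the Reeb direction, one shows that $\phi_h(\xi)$ is a parallel (hence constant-norm) self-adjoint section of $\mathrm{End}(E)$. Concretely I would compute $\nabla^h_v(\phi_h(\xi)) = (\nabla^h_v \phi_h)(\xi) + \phi_h(\nabla^h_v \xi)$; the first term vanishes by Corlette, and the second term involves $\nabla^h_v \xi$, which I need to control. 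Here the Sasakian condition \eqref{eR}, $R^{TW}(\xi, v) = 0$, and the torsion identity $T^{TW}(\xi, v) = 0$ are the key inputs: they force $\nabla^h_v \xi$ to lie in directions that, when fed into $\phi_h$ and combined with $d^{\nabla^h}\phi_h = 0$, contribute nothing. The upshot should be that $s := \phi_h(\xi)$ is $\nabla^h$-parallel.

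Finally, I would use the integral/algebraic argument of Corlette to kill $s$. Since $s$ is self-adjoint and parallel, $E$ decomposes into $\nabla^h$-parallel eigenbundles of $s$; since $\nabla^h\phi_h = 0$ and $\phi_h$ commutes with $s$ (as $\phi_h\wedge\phi_h$ is $\phi_h(\xi)$-related and the transversal components commute with the Reeb component by the flatness equation $R^{\nabla^h}+\phi_h\wedge\phi_h=0$ evaluated on pairs involving $\xi$), each eigenbundle is actually a flat $\nabla^E$-subbundle. Irreducibility of $\nabla^E$ (or, in the semi-simple case, working summand by summand) then forces $s$ to be a scalar $\lambda\cdot \mathrm{Id}$ on each irreducible piece. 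To see $\lambda = 0$, I would integrate the identity coming from $d^{\nabla^h}\phi_h = 0$ contracted with $\xi$: since $\mathrm{div}_{g_\eta}(\xi) = 0$ ($\xi$ is Killing), Stokes' theorem gives $\int_M \mathrm{tr}(s^2)\,(d\eta)^n\wedge\eta$ expressed as an exact term, hence $s = 0$. The main obstacle I anticipate is the middle step: showing $\phi_h(\xi)$ is parallel requires carefully tracking how the unitary connection $\nabla^h$ interacts with the Reeb direction — the naive hope that $\nabla^h \xi = 0$ is false, so one must genuinely use $R^{TW}(\xi,\cdot)=0$ and the Sasakian torsion vanishing, and reconcile the Levi-Civita/Tanaka--Webster comparison with the $\nabla^h$ appearing in the Corlette decomposition. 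Making that reconciliation clean, rather than through a long index computation, is where I'd spend the most care, and I suspect this is precisely the "special feature of Sasakian geometry" the introduction alludes to via Theorem \ref{harmsa}.
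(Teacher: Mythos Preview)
Your proposal has a genuine gap at the very first substantive step. You claim that harmonicity plus flatness, via a Weitzenb\"ock/Bochner argument, yields $\nabla^{h}\phi_{h}=0$ in the sense that $\phi_{h}$ is \emph{parallel} for the unitary connection, and you describe this as ``exactly Corlette's theorem.'' That is not Corlette's theorem and it is not true in general. Flatness of $\nabla^{E}$ gives the \emph{exterior} covariant closedness $d^{\nabla^{h}}\phi_{h}=0$ (a $2$-form identity) together with $R^{h}=-\tfrac{1}{2}[\phi_{h},\phi_{h}]$; harmonicity adds $(\nabla^{h})^{\ast}\phi_{h}=0$. On a compact K\"ahler manifold the Siu--Corlette Bochner argument then yields the Higgs conditions $\overline{\partial}\theta=0$, $\theta\wedge\theta=0$ for the $(1,0)$-part $\theta$ of $\phi_{h}$, not parallelism of $\phi_{h}$. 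Any nontrivial Higgs bundle on a Riemann surface already gives a counterexample to $\nabla^{h}\phi_{h}=0$. Since your Steps~3 and~4 rest entirely on this parallelism, the argument collapses. (There is also a notational slip in Step~3: in $\phi_{h}(\nabla^{h}_{v}\xi)$ you are applying the bundle connection $\nabla^{h}$ to the vector field $\xi$; you mean a tangent-bundle connection such as $\nabla^{TW}$.)

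The paper's proof proceeds quite differently and does not attempt to make $\phi_{h}$ parallel. Following Petit, it equips $\bigwedge T^{\ast}M_{\C}\otimes\mathrm{End}(E)$ with the connection $\widetilde{\nabla}=\nabla^{TW}\otimes\mathrm{Id}+\mathrm{Id}\otimes\nabla^{h}$ and the associated Dirac operator $\mathcal{D}$, proves the key lemma $\mathcal{D}\phi_{h}=-d\eta\otimes\phi_{h}(\xi)$ (using $d^{\nabla^{h}}\phi_{h}=0$, $(\nabla^{h})^{\ast}\phi_{h}=0$, and the Tanaka--Webster torsion formula), and then evaluates $\tfrac{1}{2}\int_{M}\langle[\mathcal{D}^{2},(\xi\cdot)]\phi_{h},\,\xi\cdot\phi_{h}\rangle$ in two ways. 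One computation, using $R^{TW}(\xi,\cdot)=0$ and $R^{h}=-\tfrac{1}{2}[\phi_{h},\phi_{h}]$, shows the quantity is $\le 0$; the other shows it equals $\int_{M}|\widetilde{\nabla}_{\xi}\phi_{h}|^{2}\ge 0$. Hence $\widetilde{\nabla}_{\xi}\phi_{h}=0$, and a further argument from Petit then gives $\phi_{h}(\xi)=0$. The Sasakian input you correctly identified---$T^{TW}(\xi,\cdot)=0$ and $R^{TW}(\xi,\cdot)=0$---is indeed essential, but it enters through this Dirac-commutator computation, not through any parallelism of $\phi_{h}$.
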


\begin{proof}
This theorem is proved by modifying the proof of \cite[Theorem 4.1]{Pet}.
Consider the vector bundle $\bigwedge T^\ast M_{\C}\otimes {\rm End}(E)$ over $M$ equipped with the
connection
$$\widetilde{\nabla}\,=\, \nabla^{TW}\otimes {\rm Id}_{{\rm End}(E)}+
{\rm Id}_{T^\ast M_{\C}}\otimes \nabla^{h}\, .
$$
Denote by $Cl(M)$ the Clifford bundle of $M$ associated with the Sasakian metric $g_{\eta}$.
Then, using the canonical isomorphism $Cl(M)\,\cong \,\bigwedge T^\ast M_{\C}$, consider
$\bigwedge T^\ast M_{\C}\otimes {\rm End}(E)$ as a Dirac bundle.
Define the Dirac operator corresponding to $\widetilde\nabla$ to be
\[{\mathcal D}\,=\,\sum_{i} e_{i}\cdot \widetilde\nabla_{e_{i}}\, ,
\]
where $\{e_{i}\}$ is a local orthonormal frame for $TM$ and ``$\cdot$'' denotes the Clifford
multiplication. Then this ${\mathcal D}$ is in fact a formal self-adjoint operator
(see \cite[Proposition 2.1]{Pet}).

We have the following formula (cf. \cite[Lemma 3.1]{Pet}). 

\begin{lemma}
${\mathcal D}\phi_{h}\,=\,-d\eta\otimes \phi_{h}(\xi)$. 
\end{lemma}

\begin{proof}
By the flatness of the connection $\nabla^{E}$, we have $(\nabla^{h}+\phi_{h})^{2}\,=\,0$, and
this implies that
\begin{equation}\label{Rh}
R^{h}\,=\,-\frac{1}{2}[\phi_{h},\,\phi_{h}]
\end{equation}
and $\nabla^{h}\phi_{h}\,=\,0$, where
$R^{h}$ is the curvature of the
Hermitian connection $\nabla^{h}$ on $E$ associated to $h$ (see \eqref{code}).
By a computation as in the proof of \cite[Lemma 3.1]{Pet}, we conclude that
${\mathcal D}\phi_{h}-(\nabla^{h})^{\ast}\phi_{h}$ is the anti-symmetrization of
covariant derivative on $\bigwedge T^{\ast}M_{\C}\otimes {\rm End}(E)$, and 
we have $$({\mathcal D}\phi_{h}-(\nabla^{h})^{\ast}\phi_{h})(X,Y)
\,=\,(\nabla^{h}\phi_{h})(X,Y) -\phi_{h}(T^{TW}(X,Y))\, .$$
By the harmonicity of $h$, $$(\nabla^{h})^{\ast}\phi_{h}\,=\,0\, =\, \nabla^{h}\phi_{h}$$
as above. Now the lemma follows from Proposition \ref{TWco}.
\end{proof}

Consider the following formula
\[\frac{1}{2}[{\mathcal D}^{2},\, (\xi\cdot)]\,=\, \frac{1}{2}({\mathcal D}^{2}(\xi\cdot)-
\xi\cdot {\mathcal D}^{2})\,=\,-2\xi\cdot {\mathcal R}_{\xi}
\]
(see \cite[Corollary 2.1]{Pet}),
where ${\mathcal R}_{\xi}$ is the endomorphism defined by
\[{\mathcal R}_{\xi}\,=\,-\frac{1}{2}\sum_{i} \xi\cdot e_{i}\cdot \widetilde{R}(\xi,e_{i})
\]
with $\widetilde{R}$ being the curvature of the connection $\widetilde\nabla\,=\,
\nabla^{TW}\otimes {\rm Id}_{{\rm End}(E)}+{\rm Id}_{T^\ast M_{\C}}\otimes \nabla^{h}$
on $T^\ast M_{\C}\otimes{\rm End}(E)$.
For convenience, we take $\{e_{i}\}$ to be a local orthonormal frame of $TM$ such that
$e_{0}\,=\,\xi$ and $e_{1},\,\cdots ,\,e_{2n}$ is a local orthonormal frame of $S$ associated with $L_{\eta}$. 
We have
$$
\frac{1}{2}\int_{M}\langle [{\mathcal D}^{2},(\xi\cdot)]\phi_{h},\,\xi\cdot \phi_{h}\rangle
\,=\,-2\int_{M}\langle \xi \cdot{\mathcal R}_{\xi}\phi_{h},\,\xi\cdot\phi_{h}\rangle
\,=\,-2\int_{M}\langle {\mathcal R}_{\xi}\phi_{h},\,\phi_{h}\rangle
$$
$$
=\, \sum_{i=0}^{2n}\int_{M}\langle \xi\cdot e_{i}\cdot R(\xi,e_{i})\phi_{h},\, \phi_{h}\rangle
\,=\,-\sum_{i=0}^{2n}\int_{M}\langle
e_{i}\cdot R(\xi,e_{i})\phi_{h},\, \xi\cdot\phi_{h}\rangle\, .
$$
By $R^{TW}(\xi,-)\,=\,0$ (see \eqref{eR}), we have that
$$
-\sum_{i=0}^{2n}\int_{M}\langle e_{i}\cdot R(\xi,e_{i})\phi_{h},\, \xi\cdot\phi_{h}\rangle
$$
$$
=\,-\sum_{i=1}^{2n}\int_{M}\left(\langle (R^{h}(\xi,e_{i})\phi_{h})(e_{i}),\, \phi_{h}(\xi)\rangle-\langle 
(R^{h}(\xi,e_{i})\phi_{h})(\xi),\, \phi_{h}(e_{i})\rangle\right)
$$
(see \cite[formula (17)]{Pet}).
Using $R^{h}\,=\,-\frac{1}{2}[\phi_{h},\,\phi_{h}]$ (see \eqref{Rh}), this is equal to
\[
-\frac{1}{2}\sum_{i=1}^{2n}\int_{M}\left(\langle[[\phi_{h}(\xi),\phi_{h}(e_{i})],\phi_{h}(e_{i})],
\,\phi_{h}(\xi)\rangle-\langle [[\phi_{h}(\xi),\,\phi_{h}(e_{i})],\,\phi_{h}(\xi)],\, \phi_{h}(e_{i})\rangle\right).
\]
Since $\phi_{h}$ is a $1$-form with values in the self-adjoint endomorphisms of $E$, this is equal to 
 \[-\sum_{i=1}^{2n}\int_{M}\langle[\phi_{h}(\xi),\,\phi_{h}(e_{i})],\, [\phi_{h}(\xi),\,\phi_{h}(e_{i})]\rangle.
 \]
 Thus we obtain the inequality 
\begin{equation}\label{inq1}
\frac{1}{2}\int_{M}\langle [{\mathcal D}^{2},\,(\xi\cdot)]\phi_{h},\,\xi\cdot \phi_{h}\rangle\,\le
\, 0\, .
\end{equation}
 
On the other hand, we can directly compute that
\[\frac{1}{2}\int_{M}\langle [{\mathcal D}^{2},\,(\xi\cdot)]\phi_{h},\,\xi\cdot \phi_{h}\rangle
\,=\,4\int_{M}\left(\langle\xi\cdot {\mathcal D}(\phi_{h}),\,
\widetilde\nabla_{\xi}\phi_{h}\rangle +\langle
\widetilde\nabla_{\xi}\phi_{h},\,\widetilde\nabla_{\xi}\phi_{h}\rangle\right)
\]
as done in \cite[Page 594]{Pet}.
Using $(\xi\cdot)\,=\,(\eta\wedge) -i_{\xi}$ and ${\mathcal D}\phi_{h}\,=\,-d\eta\otimes
\phi_{h}(\xi)$, where $i_{\xi}$ denotes the interior product,
together with the fact that $i_{\xi}d\eta\,=\,0$ (this means that the form $d\eta$ is basic; see
\eqref{bafo}), we
conclude that $\langle\xi\cdot {\mathcal D}(\phi_{h}),\,\widetilde\nabla_{\xi}\phi_{h}\rangle\,=\,0$.
Thus, we have the inequality 
\begin{equation}\label{inq2}
\frac{1}{2}\int_{M}\langle [{\mathcal D}^{2},(\xi\cdot)]\phi_{h},\,\xi\cdot \phi_{h}\rangle\,=\,
 \int_{M}\langle \widetilde\nabla_{\xi}\phi_{h},\,\widetilde\nabla_{\xi}\phi_{h}\rangle\,\ge \,0\, .
\end{equation}
Now from \eqref{inq1} and \eqref{inq2} it follows that $\widetilde\nabla_{\xi}\phi_{h}\,=\,0$.

By the same argument as in \cite[Page 598]{Pet}, we have that $\phi_{h}(\xi)\,=\,0$.
\end{proof}
 
\section{Basic vector bundles}

Let $M$ be a compact manifold equipped with a nonsingular foliation $\mathcal F$. Then, a 
differential form $\omega$ on $M$ is called {\em basic} if for every vector field $X$ on $M$ 
which is tangent to the leaves of $\mathcal F$, the equations
\begin{equation}\label{bafo}
i_{X}\omega\,=\,0\,=\, {\mathcal L}_{X}\omega
\end{equation}
hold.

We denote by $A^{\ast}_{B_{\mathcal F}}(M)$ the subspace of basic 
forms in the de Rham complex $A^{\ast}(M)$. It is straight-forward to check that
$A^{\ast}_{B_{\mathcal F}}(M)$ is a sub-complex of the de Rham complex $A^{\ast}(M)$.
Denote by
\begin{equation}\label{bc}
H_{B_{\mathcal F_{\xi}}}^{\ast}(M)\,=\, \bigoplus_{i\geq 0}
H_{B_{\mathcal F_{\xi}}}^{i}(M)
\end{equation}
the cohomology of the basic de Rham complex $A^{\ast}_{B_{\mathcal F_{\xi}}}(M)$. Note that
there is a natural homomorphism from $H_{B_{\mathcal F_{\xi}}}^{i}(M)$ to the $i$-th
de Rham cohomology of $M$.

Let $E$ be a complex $C^\infty$ vector bundle over $M$ of rank $r$. This $E$ is said to be 
{\em basic} if it has local trivializations with respect to an open covering 
$M\,=\,\bigcup_{\alpha} U_{\alpha}$ satisfying the condition that each transition function 
$f_{\alpha\beta}\,:\, U_{\alpha}\cap U_{\beta}\,\longrightarrow\, {\rm GL}(r, \C)$ is basic 
on $U_{\alpha}\cap U_{\beta}$, meaning it is constant on the leaves of the foliation 
$\mathcal F$. This condition is equivalent to the condition that
$E$ has a flat partial connection in the direction of $\mathcal F$.

For a basic vector bundle $E$ over $M$, a differential form $\omega\,\in\, A^{\ast}(M,\,E)$ with 
values in $E$ is called basic if $\omega$ is basic on every $U_{\alpha}$, meaning 
$\omega_{\vert U_{\alpha}}\,\in\, A^{\ast}_{B_{\mathcal F}}(U_{\alpha})\otimes \C^{r}$ for 
every $\alpha$. Let $$A^{\ast}_{B_{\mathcal F}}(M,\, E)\,\subset\,A^{\ast}(M,\,E)$$ denote the 
subspace of basic forms in the space $A^{\ast}(M,\,E)$ of differential forms with values in 
$E$.

We shall consider any flat vector bundle $(E,\, \nabla^{E})$ over $M$ as a basic vector 
bundle by local flat frames. Then, $A^{\ast}_{B}(M,\, E)$ is a sub-complex of the de Rham 
complex $A^{\ast}(M,\,E)$ equipped with the differential $d^{E}$ associated to the flat 
connection $\nabla^{E}$.

Let $(E,\, \nabla^{E})$ be a flat vector bundle over $M$.
For a Hermitian metric $h$ on $E$, consider the canonical decomposition
\[\nabla^{E}\,=\,\nabla^{h}+\phi_{h}
\]
in \eqref{code}.

\begin{prop}\label{Hba}
The following two conditions are equivalent:
\begin{itemize}
\item $\phi_{h}(X)\,=\,0$ for all $X\,\in\, T_{x}{\mathcal F}$ and $x\,\in\, M$.

\item The Hermitian structure $h$ is basic, meaning $h\,\in\, A_{B_{\mathcal F}}^{0}(M,\,
E^{\ast}\otimes \overline{E}^{\ast})$.
\end{itemize}
These conditions imply that $\phi_{h}\,\in\, A^{1}_{B_{\mathcal F}}(M,\, {\rm End}(E))$.
\end{prop}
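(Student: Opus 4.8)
The plan is to establish the equivalence of the two conditions first, and then deduce the final assertion about $\phi_h$ being basic with values in $\mathrm{End}(E)$. Throughout, recall that for a Sasakian manifold the foliation $\mathcal F\,=\,\mathcal F_\xi$ is the one-dimensional foliation generated by the Reeb field $\xi$, so $T_x\mathcal F$ is spanned by $\xi$; thus the first condition simply says $\phi_h(\xi)\,=\,0$.

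\textbf{Step 1: Translating basicness of $h$.} Write $\nabla^E\,=\,\nabla^h+\phi_h$, where $\nabla^h$ is $h$-unitary and $\phi_h$ is a $1$-form with values in $h$-self-adjoint endomorphisms. First I would record the standard identity governing how $\nabla^E$ acts on $h$: for sections $s,t$ of $E$,
\[
X\bigl(h(s,t)\bigr)\,=\,h(\nabla^h_X s,t)+h(s,\nabla^h_X t)\, ,
\]
since $\nabla^h$ is unitary, while $\phi_h$ is symmetric. Now view the flat bundle $E$ as a basic bundle via its local flat frames; in such a frame $\nabla^E\,=\,d$, and $h$ is basic precisely when its component matrix is annihilated by $\mathcal L_\xi$ (the $i_\xi$ condition in \eqref{bafo} being vacuous for a $0$-form). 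A direct computation in a flat frame gives $(\mathcal L_\xi h)(s,t)\,=\,\xi\bigl(h(s,t)\bigr)\,=\,h(\phi_h(\xi)s,t)+h(s,\phi_h(\xi)t)\,+\,$ (terms from $\nabla^h_\xi$ that cancel against the unitary part), so that $\mathcal L_\xi h\,=\,2h(\phi_h(\xi)\cdot,\cdot)$ using self-adjointness. Hence $h$ is basic $\iff$ $\mathcal L_\xi h\,=\,0$ $\iff$ $\phi_h(\xi)\,=\,0$, which is exactly the equivalence claimed. (More invariantly, $\nabla^h$ being the metric connection, the $(0,0)$-part of $\nabla^E h$ is $-2h(\phi_h(\cdot)\cdot,\cdot)$; contracting with $\xi$ gives the statement.)

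\textbf{Step 2: $\phi_h$ is basic.} Assume now the equivalent conditions hold. Since $\phi_h$ is an $\mathrm{End}(E)$-valued $1$-form, being basic means $i_\xi\phi_h\,=\,0$ and $\mathcal L_\xi\phi_h\,=\,0$ on each flat-frame chart. The first is immediate: $i_\xi\phi_h\,=\,\phi_h(\xi)\,=\,0$. For the second, use $\mathcal L_\xi\,=\,i_\xi\,d^E+d^E\,i_\xi$ as operators on $\mathrm{End}(E)$-valued forms with the flat differential $d^E$ (which on $\mathrm{End}(E)$ is the flat connection induced from $\nabla^E$). Since $i_\xi\phi_h\,=\,0$, we get $\mathcal L_\xi\phi_h\,=\,i_\xi\,d^E\phi_h$. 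But from the flatness relation $(\nabla^h+\phi_h)^2\,=\,0$ one reads off, as in \eqref{Rh}, that $d^E\phi_h\,=\,\nabla^h\phi_h+\tfrac12[\phi_h,\phi_h]\,+\,R^h\,=\,\nabla^h\phi_h$ (the bracket term and $R^h$ cancel). Moreover $\nabla^h\phi_h$ is controlled by Theorem~\ref{harmsa} together with the torsion identity: contracting the $2$-form $d^E\phi_h\,=\,\nabla^h\phi_h$ with $\xi$ and using $\phi_h(\xi)\,=\,0$, $\nabla^{TW}\xi\,=\,0$, and $R^{TW}(\xi,\cdot)\,=\,0$ (see \eqref{eR}), one finds $i_\xi\nabla^h\phi_h\,=\,\nabla^h_\xi(\phi_h(\cdot))-\phi_h(\nabla^{TW}_\xi\cdot)\,=\,\widetilde\nabla_\xi\phi_h$, which vanishes by the computation in the proof of Theorem~\ref{harmsa} (or, in the generality of this section where $h$ is merely assumed basic, one uses instead that $\nabla^h_\xi$ commutes with the flat partial connection and $\phi_h(\xi)=0$ is parallel along the leaf). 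Either way $\mathcal L_\xi\phi_h\,=\,0$, so $\phi_h\,\in\,A^1_{B_{\mathcal F}}(M,\mathrm{End}(E))$.

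\textbf{Main obstacle.} The delicate point is Step~2, specifically verifying $\mathcal L_\xi\phi_h\,=\,0$ without circularly invoking the full strength of the harmonic-metric hypothesis: Proposition~\ref{Hba} is stated for an arbitrary flat bundle with a basic Hermitian metric, not only for the harmonic metric of Theorem~\ref{harmsa}. The cleanest route is to argue purely from $\phi_h(\xi)\,=\,0$ and $\mathcal L_\xi=i_\xi d^E+d^E i_\xi$: since $d^E\phi_h=-R^h-\tfrac12[\phi_h,\phi_h]$ by flatness of $\nabla^E$, and the right side is the curvature of $\nabla^h$ plus a term built from $\phi_h$, contracting with $\xi$ and using that $h$ is basic (so $\nabla^h$ restricts to the flat partial connection along the leaves, whence $i_\xi R^h\,=\,0$) kills everything. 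I expect the bookkeeping of which frame ($\nabla^{TW}$-parallel versus flat) one computes in to be the only real source of friction; the geometric input is entirely contained in the Sasakian identities already recorded in Proposition~\ref{TWco} and \eqref{eR}.
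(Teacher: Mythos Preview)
Your Step~1 is essentially the paper's argument: in a local flat frame the matrix of $h$ is a function $f_\alpha$ with values in positive Hermitian matrices, and $\phi_h\,=\,-\tfrac12\,(df_\alpha)f_\alpha^{-1}$; thus $\phi_h(X)=0$ for $X$ tangent to $\mathcal F$ exactly when $f_\alpha$ is leafwise constant, i.e.\ $h$ is basic. This part is fine.

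Step~2, however, is much harder than it needs to be, and as written has real problems. The paper dispatches the final assertion in one line from the same local formula: once $f_\alpha$ is basic, so are $df_\alpha$ (the exterior derivative preserves basic forms) and $f_\alpha^{-1}$, hence $\phi_h=-\tfrac12(df_\alpha)f_\alpha^{-1}$ is basic. No curvature identities, no Cartan formula, no Sasakian input. In particular the proposition is stated and proved for an arbitrary nonsingular foliation, whereas you immediately specialize to $\mathcal F_\xi$ and bring in Tanaka--Webster geometry, $R^{TW}(\xi,\cdot)=0$, and even Theorem~\ref{harmsa}; none of this is needed or available in the stated generality.

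Concretely, your Step~2 has two defects. First, the displayed identity $d^E\phi_h=\nabla^h\phi_h+\tfrac12[\phi_h,\phi_h]+R^h$ is not correct: in a flat frame $d^E\phi_h=d\phi_h=d^{\nabla^h}\phi_h+[\phi_h,\phi_h]=[\phi_h,\phi_h]$, since the connection form of $\nabla^h$ is $-\phi_h$ and $d^{\nabla^h}\phi_h=0$ by the Hermitian/skew decomposition of flatness. Second, your justification ``$\nabla^h$ restricts to the flat partial connection along the leaves, whence $i_\xi R^h=0$'' is circular: from $R^h=-d\phi_h+\phi_h\wedge\phi_h$ in a flat frame one gets $i_\xi R^h=-\mathcal L_\xi\phi_h$, so $i_\xi R^h=0$ is exactly what you are trying to prove. (It \emph{is} true, but for the different reason $R^h=-\tfrac12[\phi_h,\phi_h]$ together with $\phi_h(\xi)=0$.) If you insist on a Cartan-formula proof, the clean version is: in a flat frame $i_\xi\phi_h=0$ and $d\phi_h=[\phi_h,\phi_h]$, hence $\mathcal L_\xi\phi_h=i_\xi d\phi_h=i_\xi[\phi_h,\phi_h]=0$. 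But the paper's one-line argument from the explicit formula is both simpler and works for any foliation.
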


\begin{proof}
For local flat frames of $E$ with respect to an open covering $M\,=\,\bigcup_{\alpha} U_{\alpha}$, we can 
write $$\phi_{h}\,=\,-\frac{1}{2}df_{\alpha}f_{\alpha}^{-1}$$ on each $U_{\alpha}$ for certain 
functions $f_{\alpha}$ on $U_{\alpha}$ with values in the positive definite Hermitian matrices 
$Herm^{+}_{r}$ with respect to $h$. Then, $\phi_{h}(X)\,=\,0$ for any $X\,\in\, T_{x}{\mathcal F}$ if and only if each 
$f_{\alpha}$ is basic. Thus the proposition follows.
\end{proof}
 
Let $(M,\, (T^{1,0}, \,S,\, I),\, (\eta, \,\xi))$ be a compact Sasakian manifold. Then the 
Reeb vector field $\xi$ defines a $1$-dimensional foliation $\mathcal F_{\xi}$ on $M$. It is 
known that the map $I\,:\,TM\,\longrightarrow\, TM$ associated with the CR-structure $T^{1,0}$ 
defines a transversely complex structure on the foliated manifold $(M,\,\mathcal F_{\xi})$. Furthermore, 
the closed basic $2$-form $d\eta$ is a transversely K\"ahler structure with respect
to this transversely complex structure. Corresponding to the
decomposition $S_{\C}\,=\,T^{1,0}\oplus T^{0,1}$, we have the bigrading $$A^{r}_{B_{\mathcal 
F_{\xi}}}(M)_{\C}\,=\,\bigoplus_{p+q=r} A^{p,q}(M)$$ as well as the decomposition of the exterior 
differential $$d_{\vert A^{r}_{B_{\mathcal 
F_{\xi}}}(M)_{\C}}\,=\,\partial_{\xi}+\overline\partial_{\xi}$$ on $A^{r}_{B_{\mathcal 
F_{\xi}}}(M)_{\C}$, so that $$\partial_{\xi}\,:\,A^{p,q}_{B_{\mathcal F}}(M)\,\longrightarrow\, 
A^{p+1,q}_{B_{\mathcal F}}(M)\ \text{ and } \
\overline\partial_{\xi}\,:\,A^{p,q}_{B_{\mathcal F}}(M)\,\longrightarrow\, 
A^{p,q+1}_{B_{\mathcal F}}(M)\, .$$
 
We shall now use transverse Hodge theory (\cite{KT}, \cite{EKA}).
Consider the usual Hodge star operator $$\ast\,:\, A^{r}(M)\,\longrightarrow\, A^{2n+1-r}(M)$$
associated to the Sasakian metric $g_{\eta}$ and the formal adjoint operator
$$\delta\,=\,-\ast d\ast \,:\,A^{r}(M)\,\longrightarrow\, A^{r-1}(M)\, .$$
We define the homomorphism $$\star_{\xi}\,:\, A^{r}_{B_{\mathcal F_{\xi}}}(M)
\,\longrightarrow\,A^{2n-r}_{B_{\mathcal F_{\xi}}}(M)$$ to be
$\star_{\xi}\omega\,=\,\ast(\eta\wedge \omega)$ for $\omega\,\in\, A^{r}_{B_{\mathcal F_{\xi}}}(M)$.
Also define the operators $$\delta_{\xi}\,=\,-\star_{\xi}d\star_{\xi}\,:\,
A^{r}_{B_{\mathcal F_{\xi}}}(M)\,\longrightarrow\, A^{r-1}_{B_{\mathcal F_{\xi}}}(M)\, ,$$
$$\partial_{\xi}^{\ast}\,=\,-\star_{\xi}\overline\partial_{\xi}\star_{\xi}\,:\,
A^{p,q}_{B_{\mathcal F}}(M)\,\longrightarrow\, A^{p-1,q}_{B_{\mathcal F}}(M)\, ,$$
$$\overline\partial_{\xi}^{\ast}\,=\,-\star_{\xi}\partial_{\xi}\star_{\xi}\,:\,
A^{p,q}_{B_{\mathcal F}}(M)\,\longrightarrow\, A^{p,q-1}_{B_{\mathcal F}}(M)$$ and
$\Lambda \,=\,-\star_{\xi}(d\eta\wedge)\star_{\xi}$.
They are the formal adjoints of $d$, $\partial_{\xi}$, $\overline\partial_{\xi}$ and
$(d\eta\wedge)$ respectively for the pairing 
\[A^{r}_{B_{\mathcal F_{\xi}}}(M)\times A^{r}_{B_{\mathcal F_{\xi}}}(M)\,\ni\,
(\alpha,\,\beta)\,\longmapsto\, \int_{M} \eta\wedge\alpha\wedge \star_{\xi}\beta\, .
\]
Define the Laplacian operators $$\Delta\,:\, A^{r}(M)\,\longrightarrow\, A^{r}(M)\ \ \text{ and }
\ \ \Delta_{\xi}\,:\,A^{r}_{B_{\mathcal F_{\xi}}}(M)\,\longrightarrow\,
A^{r}_{B_{\mathcal F_{\xi}}}(M)$$ by
\[\Delta\,=\,d\delta+\delta d\ \ \text{ and } \ \ \Delta_{\xi}\,=\,d\delta_{\xi}+\delta_{\xi}d
\]
respectively. For $\omega\,\in\, A^{r}_{B_{\mathcal F_{\xi}}}(M)$, since the relation $\ast\omega
\,=\,(\star_{\xi}\omega)\wedge \eta$ holds, we have the relation 
\[\delta\omega\,=\,\delta_{\xi}\omega+\ast (d\eta\wedge \star_{\xi}\omega)\, .
\]
Thus, for $\omega\,\in\, A^{1}_{B_{\mathcal F_{\xi}}}(M)$, the equality $\delta_{\xi}\omega
\,=\,\delta\omega$ holds, and hence for $f\,\in\, A^{0}_{B_{\mathcal F_{\xi}}}(M)$, we have
that $\Delta_{\xi}f\,=\,\Delta f$.
The usual K\"ahler identities
\[[\Lambda, \partial_{\xi}]\,=\,
-\sqrt{-1}\overline\partial_{\xi}^{\ast}\ \ \text{ and } \ \
[\Lambda ,\overline\partial_{\xi}]=\sqrt{-1}\partial_{\xi}^{\ast}
\]
hold, and these imply that
\[\Delta_{\xi}\,=\,2\Delta_{\xi}^{\prime}\,=\,2\Delta_{\xi}^{\prime\prime}\, ,
\]
where $\Delta_{\xi}^{\prime}\,=\,\partial_{\xi}\partial_{\xi}^{\ast}+\partial_{\xi}^{\ast}\partial_{\xi}$
and $\Delta_{\xi}^{\prime\prime}\,=\,\overline\partial_{\xi}\overline\partial_{\xi}^{\ast}+
\overline\partial_{\xi}^{\ast}\overline\partial_{\xi}$.

Let $E$ be a complex $C^\infty$ vector bundle over $M$.
We say that $E$ is transversely holomorphic if 
it admits local trivializations with respect to an open covering $M\,=\,\bigcup_{\alpha} U_{\alpha}$ such that
each transition function $f_{\alpha\beta}\,:\, U_{\alpha}\cap U_{\beta}\,\longrightarrow\,
{\rm GL}(r,\C)$ is basic and holomorphic (i.e., $\overline\partial_{\xi} f_{\alpha\beta}\,=\,0$).
For a transversely holomorphic vector bundle $E$ over $M$, define the canonical
Dolbeault operator $$\overline\partial_{E}\,:\,A^{p,q}_{B_{\mathcal F}}(M,\, E)\,\longrightarrow\,
A^{p,q+1}_{B_{\mathcal F}}(M,\, E)$$ satisfying the following two conditions:
$$\overline\partial_{E} (\omega\wedge s)\,=\,
(\overline\partial_{\xi}\omega)\wedge s+(-1)^{p+q}\omega\wedge \overline\partial_{E} s$$ and
$\overline\partial_{E}\overline\partial_{E} \,=\,0$.
Conversely, on a basic complex vector bundle $E$, if we have an operator
$\overline\partial_{E}\,:\,A^{p,q}_{B_{\mathcal F}}(M,\, E)\,\longrightarrow\,
A^{p,q+1}_{B_{\mathcal F}}(M, \,E)$ satisfying the above two conditions, then the
differential operator $\overline\partial_{E}$ defines a
canonical transversely holomorphic structure on $E$ by the Frobenius theorem \cite[p.~9,
Proposition 3.7]{Ko}.

Let $(E,\, \nabla^{E})$ be a flat complex vector bundle
over $M$ such that $E$ is equipped with a Hermitian metric $h$. Let
$$\nabla^{E}\,=\,\nabla^{h}+\phi_{h}$$ 
be the canonical decomposition of the connection $\nabla^{E}$ (see \eqref{code}).
Then, using the pairing $A^{r}(M,\,E)\times A^{2n+1-r}(M,\,E)\,\longrightarrow\, A^{2n+1}(M)$
associated with $h$, define the Hodge star operator $$\ast_{h}\,:\, A^{r}(M,\,E)
\,\longrightarrow\, A^{2n+1-r}(M,\,E)$$ as well as the formal adjoint operator
$(\nabla^{h})^{\ast}\,=\,-\ast_{h}\nabla^{h}\ast_{h}$.

Assume the Hermitian structure $h$ to be basic (equivalently, $\phi_{h}(\xi)\,=\,0$ by Proposition \ref{Hba}).
Since $\nabla^{E}_{\xi}\,=\,\nabla^{h}_{\xi}$, it follows that the unitary connection
$\nabla^h$ restricts to a homomorphism
$$\nabla^{h}\,:\, A^{r}_{B_{\mathcal F}}(M, \,E)\,\longrightarrow\,
A^{r+1}_{B_{\mathcal F}}(M,\, E)\, .$$
Now, on $A^{p,q}_{B_{\mathcal F_{\xi}}}(M,\, E)$, decompose $$\nabla^{h}\,=\,
\partial_{h,\xi}+\overline\partial_{h,\xi}$$ such that $\partial_{h,\xi}\,:\,
A^{p,q}_{B_{\mathcal F_{\xi}}}(M,\, E) \,\longrightarrow\, A^{p+1,q}_{B_{\mathcal F_{\xi}}}(M,\, E)$
and $\overline\partial_{h,\xi}\,:\,A^{p,q}_{B_{\mathcal F_{\xi}}}(M,\, E)
\,\longrightarrow\, A^{p,q+1}_{B_{\mathcal F_{\xi}}}(M,\, E)$.
Since $\nabla^{h}$ is a unitary connection, we have that $$\overline\partial_{\xi}h(s_{1},s_{2})
\,=\,h(\overline\partial_{h,\xi}s_{1}, s_{2})+h(s_{1}, \partial_{h,\xi}s_{2})$$ for
$s_1,\, s_2\,\in\, A^{0,0}_{B_{\mathcal F_{\xi}}}(M,\, E)$.
We define the operator $$\star_{h, \xi}\,:\, A^{r}_{B_{\mathcal F_{\xi}}}(M,\,E)\,\longrightarrow\,
A^{2n-r}_{B_{\mathcal F_{\xi}}}(M,\,E)$$ and the formal adjoint operators
$$(\nabla^{h})^{\ast}_{\xi}\,=\,-\star_{h, \xi}\nabla^{h}\star_{h,\xi}\,:\,
A^{r}_{B_{\mathcal F_{\xi}}}(M,\,E)\,\longrightarrow\, A^{r-1}_{B_{\mathcal F_{\xi}}}(M,\,E)\, ,$$
$\partial_{h,\xi}^{\ast}\,=\,-\star_{h,\xi}\overline\partial_{h,\xi}\star_{h,\xi}\,:\,
A^{p,q}_{B_{\mathcal F_{\xi}}}(M,\,E)\,\longrightarrow\, A^{p-1,q}_{B_{\mathcal F_{\xi}}}(M,\,E)$ and
$$\overline\partial_{h,\xi}^{\ast}\,=\,-\star_{h,\xi}\partial_{h,\xi}\star_{h,\xi}\,:\,
A^{p,q}_{B_{\mathcal F_{\xi}}}(M,\,E)\,\longrightarrow\,
A^{p,q-1}_{B_{\mathcal F_{\xi}}}(M,\,E)\, ,$$ as well we
$\Lambda_{h} \,:=\,-\star_{h,\xi}(d\eta\wedge)\star_{h,\xi}$ in the same way as above.
We now have the K\"ahler identities
\[[\Lambda,\, \partial_{h,\xi}]\,=\,-\sqrt{-1}\overline\partial_{h,\xi}^{\ast} \ \
\text{ and }\ \ [\Lambda ,\,\overline\partial_{h,\xi}]\,=\, \sqrt{-1}\partial_{h,\xi}^{\ast}\, .
\]

\begin{thm}\label{thm1}
Let $(M,\, (T^{1,0},\, S,\, I),\, (\eta,\, \xi))$ be a compact Sasakian manifold and
$(E, \, \nabla^{E})$ a flat complex vector bundle over $M$ with a Hermitian metric $h$.
Then the following two conditions are equivalent:
\begin{itemize}
\item The Hermitian structure $h$ is harmonic, i.e., $(\nabla^{h})^{\ast}\phi_{h}\,=\,0$.

\item The Hermitian structure $h$ is basic ($\Leftrightarrow \phi_{h}(\xi)\,=\,0$ and implying
that $$\phi_{h}\,\in 
\,A^{1}_{B_{\mathcal F_{\xi}}}(M, {\rm End}(E))$$ by Proposition \ref{Hba}), and for the 
decomposition $$\phi_{h}\,=\,\theta^{1,0}_{h,\xi}+\theta^{0,1}_{h,\xi}$$ with 
$\theta^{1,0}_{h,\xi}\,\in\, A^{1,0}_{B_{\mathcal F_{\xi}}}(M,\,{\rm End}(E))$ and $ 
\theta^{0,1}_{h,\xi}\in A^{0,1}_{B_{\mathcal F_{\xi}}}(M,\,{\rm End}(E))$,
\[\overline\partial_{h,\xi}\overline\partial_{h,\xi}\,=\,0,\qquad 
[\theta^{1,0}_{h,\xi},\,\theta^{1,0}_{h,\xi}]\,=\,0\qquad {\rm and} \qquad 
\overline\partial_{h,\xi}\theta^{1,0}_{h,\xi}\,=\,0\, .\]
\end{itemize}
\end{thm}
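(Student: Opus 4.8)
The plan is to prove the two implications separately. The key realization is that, once $h$ is known to be basic, the entire discussion takes place inside the basic de Rham complex $A^\ast_{B_{\mathcal F_\xi}}(M,\,{\rm End}(E))$, which — equipped with the transverse K\"ahler form $d\eta$ and the transverse Hodge theory recalled above — behaves in every relevant respect like the de Rham complex of a compact K\"ahler manifold, so that the Corlette--Simpson argument applies. The only genuinely Sasakian ingredient needed to reach that point is Theorem \ref{harmsa}.

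I would begin with a computation common to both directions. Assume $h$ is basic, so $\phi_h\,\in\, A^1_{B_{\mathcal F_\xi}}(M,\,{\rm End}(E))$ by Proposition \ref{Hba}, and write $\phi_h\,=\,\theta^{1,0}_{h,\xi}+\theta^{0,1}_{h,\xi}$. Flatness of $\nabla^E$ gives, as in \eqref{Rh}, that $\nabla^h\phi_h\,=\,0$ and $R^h\,=\,-\frac{1}{2}[\phi_h,\phi_h]$; decomposing these by transverse bidegree yields
\[
\partial_{h,\xi}\theta^{1,0}_{h,\xi}\,=\,0,\qquad \overline\partial_{h,\xi}\theta^{0,1}_{h,\xi}\,=\,0,\qquad
\overline\partial_{h,\xi}\theta^{1,0}_{h,\xi}+\partial_{h,\xi}\theta^{0,1}_{h,\xi}\,=\,0,
\]
together with $\partial_{h,\xi}^2\,=\,-\frac{1}{2}[\theta^{1,0}_{h,\xi},\theta^{1,0}_{h,\xi}]$ and $\overline\partial_{h,\xi}^2\,=\,-\frac{1}{2}[\theta^{0,1}_{h,\xi},\theta^{0,1}_{h,\xi}]$ (the $(2,0)$ and $(0,2)$ parts of $R^h$). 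Since $\theta^{0,1}_{h,\xi}$ is the $h$-adjoint of $\theta^{1,0}_{h,\xi}$ and $\partial_{h,\xi}$ is $h$-adjoint to $\overline\partial_{h,\xi}$, the third relation reads $\partial_{h,\xi}\theta^{0,1}_{h,\xi}\,=\,-\overline\partial_{h,\xi}\theta^{1,0}_{h,\xi}$ and shows that $\overline\partial_{h,\xi}\theta^{1,0}_{h,\xi}$ is $h$-skew-Hermitian. As $\phi_h$ is a basic $1$-form, $(\nabla^h)^\ast\phi_h\,=\,(\nabla^h)^\ast_\xi\phi_h$ (the correction term vanishes, exactly as $\delta$ and $\delta_\xi$ agree on $A^1_{B_{\mathcal F_\xi}}(M)$, since $d\eta\wedge\star_{h,\xi}\phi_h$ is basic of degree $2n+1$), and for bidegree reasons $(\nabla^h)^\ast_\xi\phi_h\,=\,\partial_{h,\xi}^\ast\theta^{1,0}_{h,\xi}+\overline\partial_{h,\xi}^\ast\theta^{0,1}_{h,\xi}$. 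Using the transverse K\"ahler identities and $\Lambda\theta^{1,0}_{h,\xi}\,=\,0\,=\,\Lambda\theta^{0,1}_{h,\xi}$, one finds $\partial_{h,\xi}^\ast\theta^{1,0}_{h,\xi}\,=\,-\sqrt{-1}\,\Lambda\overline\partial_{h,\xi}\theta^{1,0}_{h,\xi}$ and $\overline\partial_{h,\xi}^\ast\theta^{0,1}_{h,\xi}\,=\,\sqrt{-1}\,\Lambda\partial_{h,\xi}\theta^{0,1}_{h,\xi}\,=\,-\sqrt{-1}\,\Lambda\overline\partial_{h,\xi}\theta^{1,0}_{h,\xi}$, hence
\[
(\nabla^h)^\ast\phi_h\,=\,-2\sqrt{-1}\,\Lambda\,\overline\partial_{h,\xi}\theta^{1,0}_{h,\xi}\,.
\]

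The direction (2) $\Rightarrow$ (1) is then immediate: if $h$ is basic and the three identities hold, the third one, $\overline\partial_{h,\xi}\theta^{1,0}_{h,\xi}\,=\,0$, makes the right-hand side above vanish, so $(\nabla^h)^\ast\phi_h\,=\,0$ and $h$ is harmonic. For the direction (1) $\Rightarrow$ (2), I would first invoke Theorem \ref{harmsa} — whose proof uses only the harmonicity $(\nabla^h)^\ast\phi_h\,=\,0$ and the Sasakian structure — to conclude $\phi_h(\xi)\,=\,0$; hence $h$ is basic by Proposition \ref{Hba} and the computation above becomes available. Now $(\nabla^h)^\ast\phi_h\,=\,0$ reads $\Lambda\,\overline\partial_{h,\xi}\theta^{1,0}_{h,\xi}\,=\,0$, so $\overline\partial_{h,\xi}\theta^{1,0}_{h,\xi}$ is a primitive basic $(1,1)$-form valued in the $h$-skew-Hermitian endomorphisms of $E$. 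What remains is to show $\overline\partial_{h,\xi}\theta^{1,0}_{h,\xi}\,=\,0$ and $[\theta^{1,0}_{h,\xi},\theta^{1,0}_{h,\xi}]\,=\,0$; the identity $\overline\partial_{h,\xi}^2\,=\,-\frac{1}{2}[\theta^{0,1}_{h,\xi},\theta^{0,1}_{h,\xi}]\,=\,0$ then follows by taking $h$-adjoints.

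This last vanishing is the analytic heart of the proof — it says exactly that the (now transversely harmonic, since $\phi_h(\xi)\,=\,0$) metric $h$ is transversely pluriharmonic — and it is where the main obstacle lies. I would establish it by transcribing the Bochner--Weitzenb\"ock argument of Corlette and Simpson (\cite[Theorem 5.1]{Cor}, \cite[Lemma 1.1]{SimL}): integrating a Weitzenb\"ock identity over the compact manifold $M$ one arrives at an identity of the form $\|\overline\partial_{h,\xi}\theta^{1,0}_{h,\xi}\|^2+\frac{1}{4}\|[\theta^{1,0}_{h,\xi},\theta^{1,0}_{h,\xi}]\|^2\,=\,0$ for the basic $L^2$-inner product, which forces both terms to vanish. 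Since that argument is purely transverse and all of its ingredients — the operators $\partial_{h,\xi}$, $\overline\partial_{h,\xi}$, their formal adjoints, $\Lambda$, and the transverse K\"ahler identities — have already been set up for the basic complex, no new analysis is required beyond checking that the curvature terms in the Weitzenb\"ock formula are the transverse ones, which is guaranteed by $\phi_h(\xi)\,=\,0$. In short, modulo Theorem \ref{harmsa} the only non-formal step is this Bochner estimate, everything else being bookkeeping with bidegrees and the K\"ahler identities; and the specifically Sasakian content is concentrated in the reduction $\phi_h(\xi)\,=\,0$, without which the basic complex would carry no transverse K\"ahler structure compatible with $\phi_h$ and the Corlette--Simpson machinery could not be invoked.
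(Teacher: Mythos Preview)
Your proposal is correct and follows essentially the same route as the paper: both invoke Theorem \ref{harmsa} to reduce to the basic case, use the transverse K\"ahler identities to rewrite $(\nabla^h)^\ast\phi_h$ as (a multiple of) $\Lambda\,\overline\partial_{h,\xi}\theta^{1,0}_{h,\xi}$, and then run the Corlette--Simpson Bochner argument (the paper via the explicit $\partial_\xi\overline\partial_\xi\, h(\theta^{1,0}_{h,\xi},\theta^{1,0}_{h,\xi})$ computation integrated against $(d\eta)^{n-2}\wedge\eta$) to force both $\overline\partial_{h,\xi}\theta^{1,0}_{h,\xi}$ and $[\theta^{1,0}_{h,\xi},\theta^{1,0}_{h,\xi}]$ to vanish. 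Apart from an inessential factor of $\tfrac{1}{2}$ in your bracket convention, your outline matches the paper's proof.
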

 
\begin{proof}
First suppose that $h$ is basic. 
Then by the relation similar to the relation between $\delta$ and $\delta_{\xi}$, we have that 
\[(\nabla^{h})^{\ast}\phi_{h}\,=\,(\nabla^{h})^{\ast}_{\xi}\phi_{h}\, .
\]
By the same computation as in \cite[Page 376]{Cor}, we have that
\[(\nabla^{h})^{\ast}_{\xi}\phi_{h}\,=\,\sqrt{-1}\Lambda_{h}(\partial_{h,\xi}-
\overline\partial_{h,\xi})\phi_{h}
\, =\,2\partial_{h,\xi}^{\ast}\theta^{1,0}_{h,\xi}\, .
\]
Thus, $h$ is harmonic if and only if $\overline\partial_{h,\xi}\theta^{1,0}_{h,\xi}$ is primitive,
meaning $\Lambda_{h}\overline\partial_{h,\xi}\theta^{1,0}_{h,\xi}\,=\,0$. From
this it follows that the second condition in the theorem implies the first one.
 
To prove the converse, suppose that $h$ is harmonic.
Then, by Theorem \ref{harmsa}, $h$ is basic. 
By the flatness of the connection $\nabla^E$, we have that $(\nabla^{h})^{2}\,=\,-\frac{1}{2}[\phi_{h},\phi_{h}]$
and $\nabla^{h}\phi_{h}\,=\,0$. Now using
$(\nabla^{h})^{2}\,=\,-\frac{1}{2}[\phi_{h},\phi_{h}]$ it follows that
\[\partial_{h,\xi}\partial_{h,\xi}\,=\,-[\theta^{1,0}_{h,\xi},\,\theta^{1,0}_{h,\xi}]
\qquad {\rm and} \qquad \overline\partial_{h,\xi}\overline\partial_{h,\xi}\,=
\,-[\theta^{0,1}_{h,\xi},\, \theta^{0,1}_{h,\xi}]\, .
\]
Now $\nabla^{h}\phi_{h}\,=\,0$ implies that
\[\partial_{h,\xi}\theta^{1,0}_{h,\xi},\qquad \overline\partial_{h,\xi}\theta^{0,1}_{h,\xi}\qquad {\rm and}\qquad \overline\partial_{h,\xi}\theta^{1,0}_{h,\xi}+\partial_{h,\xi}\theta^{0,1}_{h,\xi}=0.
\]
 
We have (cf. the proof of \cite[Theorem 5.1]{Cor})
$$
\partial_{\xi}\overline\partial_{\xi}h(\theta^{1,0}_{h,\xi},\, \theta^{1,0}_{h,\xi})
\,=\, h(\partial_{h,\xi}\overline\partial_{h,\xi}\theta^{1,0}_{h,\xi},\, \theta^{1,0}_{h,\xi})+h(\overline\partial_{h,\xi}\theta^{1,0}_{h,\xi},\,\overline\partial_{h,\xi}\theta^{1,0}_{h,\xi})
$$
$$
=\,-h(\partial_{h,\xi}\partial_{h,\xi}\theta^{0,1}_{h,\xi},\, \theta^{1,0}_{h,\xi})+h(\overline\partial_{h,\xi}\theta^{1,0}_{h,\xi},\,\overline\partial_{h,\xi}\theta^{1,0}_{h,\xi})
\,=\,h\left([\theta^{1,0}_{h,\xi},\,\theta^{1,0}_{h,\xi}],\,\theta^{0,1}_{h,\xi}],\,
\theta^{1,0}_{h,\xi}\right)+h\left(\overline\partial_{h,\xi}\theta^{1,0}_{h,\xi},
\,\overline\partial_{h,\xi}\theta^{1,0}_{h,\xi}\right)
$$
$$
=\, -h\left([\theta^{1,0}_{h,\xi},\,\theta^{1,0}_{h,\xi}],\,[\theta^{1,0}_{h,\xi},
\,\theta^{1,0}_{h,\xi}]\right)+h\left(\overline\partial_{h,\xi}\theta^{1,0}_{h,\xi},
\,\overline\partial_{h,\xi}\theta^{1,0}_{h,\xi}\right)\, .
$$
Since $\overline\partial_{h,\xi}\theta^{1,0}_{h,\xi}$ is primitive by the above argument,
using the Lefschetz decomposition of basic forms corresponding to the transversely K\"ahler form
$d\eta$, and integrating the wedge product of this equation and $(d\eta)^{n-2}\wedge \eta$,
from the Stokes theorem, we obtain that
\[0\,=\,-C_{1}\int_{M}\vert \overline\partial_{h,\xi}\theta^{1,0}_{h,\xi}\vert -C_{2}\int_{M}\vert [\theta^{1,0}_{h,\xi},\,\theta^{1,0}_{h,\xi}]\vert
\]
for some positive constant $C_{1}$ and $C_{2}$. Consequently, we have 
 \[ [\theta^{1,0}_{h,\xi},\,\theta^{1,0}_{h,\xi}]\,=\,0\qquad {\rm and} \qquad
\overline\partial_{h,\xi}\theta^{1,0}_{h,\xi}\,=\,0\, .\]
Using $\partial_{h,\xi}\partial_{h,\xi}\,=\,-[\theta^{1,0}_{h,\xi},\,\theta^{1,0}_{h,\xi}]$
it follows that $\overline\partial_{h,\xi}\overline\partial_{h,\xi}\,=\,0$
and hence the theorem is proved.
\end{proof}
 
\section{Basic Higgs bundles}\label{se5}

Let $(M,\, (T^{1,0},\, S,\, I),\, (\eta,\, \xi))$ be a compact Sasakian manifold.
As in \eqref{bc}, $H_{B_{\mathcal F_{\xi}}}^{\ast}(M)$ is the cohomology of the basic
de Rham complex $A^{\ast}_{B_{\mathcal F_{\xi}}}(M)$.
By the K\"ahler identities on $A^{p,q}_{B_{\mathcal F_{\xi}}}(M)$, as in the usual K\"ahler
case, we have the canonical Hodge decomposition
\[H_{B_{\mathcal F_{\xi}}}^{r}(M)\otimes \C\,=\,\bigoplus_{p+q=r} H^{p,q}_{B_{\mathcal F_{\xi}}}(M)\, .
\]

Let $E$ be a complex basic vector bundle over $M$.
Consider a connection operator $$\nabla\,:\, A^{\ast}_{B_{\mathcal F_{\xi}}}(M,\,E)
\,\longrightarrow\, A^{\ast+1}_{B_{\mathcal F_{\xi}}}(M,\,E)$$ satisfying the equation
\[\nabla(\omega s)\,=\, (d\omega )s+(-1)^{r}\omega\wedge \nabla s
\]
for $\omega\,\in\, A^{r}_{B_{\mathcal F_{\xi}}}(M)$ and $s\,\in\,
A^{0}_{B_{\mathcal F_{\xi}}}(M,\,E)$.
Let $$R^{\nabla}\,= \,\nabla^2\,\in\, A_{B_{\mathcal F}}^{2}({M,\, \rm End}(E))$$
be the curvature of $\nabla$. For
any $1\,\le\, i\,\le \,n$,
Define $c_{i, B_{\mathcal F_{\xi}}}(E,\nabla) \,\in\, A^{2i}_{B_{\mathcal F_{\xi}}}(M)$ by 
\[{\rm det}\left(I- \frac{R^{\nabla}}{2\pi\sqrt{-1}}\right)
\,=\,1+\sum_{i=1}^{n}c_{i, B_{\mathcal F_{\xi}}}(E,\nabla) \, .
\]
Then, as the case of usual Chern-Weil theory, the cohomology class
$$c_{i, B_{\mathcal F_{\xi}}}(E)\, \in\, H_{B_{\mathcal F_{\xi}}}^{2i}(M)$$ of each
$c_{i, B_{\mathcal F_{\xi}}}(E,\nabla)$ is independent of the choice of the connection $\nabla$
taking $A^{\ast}_{B_{\mathcal F_{\xi}}}(M,\,E)$ to $A^{\ast+1}_{B_{\mathcal F_{\xi}}}(M,\,E)$.
If $E$ is a transversely holomorphic vector bundle, just as in the case of Chern classes of
holomorphic vector bundles over compact K\"ahler manifolds, we have that
$c_{i, B_{\mathcal F_{\xi}}}(E)\,\in\, H^{i,i}_{B_{\mathcal F_{\xi}}}(M)$.

A \textit{basic Higgs bundle} over $(M,\, (T^{1,0}, \,S, \,I), \,(\eta,\, \xi))$ is a pair
$(E, \,\theta)$ consisting of a transversely holomorphic vector bundle $E$ and a section
$\theta\,\in\, A^{1,0}_{B_{\mathcal F_{\xi}}}(M,\,{\rm 
End}(E))$ satisfying the following two conditions:
$$\overline\partial_{E}\theta \,=\,0\ \ \text{ and }
\ \ \theta\wedge \theta\,=\,0\, .$$
This section $\theta$ is called a Higgs field on $E$. Note that
$$\theta\wedge \theta\,\in\, A^{2,0}_{B_{\mathcal F_{\xi}}}(M,\,{\rm
End}(E))\, .$$

For a basic Higgs bundle $(E,\, \theta)$, we define the operator 
$$D^{\prime\prime}_{E,\theta}\,=\,\overline\partial_{E}+\theta\,:\, A^{\ast}_{B_{\mathcal 
F_{\xi}}}(M,\,E)\,\longrightarrow\, A^{\ast+1}_{B_{\mathcal F_{\xi}}}(M,\,E)\, .$$

Let $h$ be a Hermitian metric on $E$. Assume that $h$ is basic.
Define $\overline\theta_{h}\,\in\, A^{0,1}_{B_{\mathcal F_{\xi}}}(M,\,{\rm End}(E))$ by
\begin{equation}\label{tst}
(\theta (e_{1}),\, e_{2})\,=\,(e_{1}, \,\overline\theta_{h} (e_{2}))
\end{equation}
for $e_{1},\,e_{2}\,\in\, E$.

Let $\nabla$ be a unitary connection on $E$ preserving $h$ associated to $h$ such that
$\nabla$ actually restricts as $\nabla\,:\,A^{\ast}_{B_{\mathcal F_{\xi}}}(M,\,E)\,\longrightarrow\,
A^{\ast+1}_{B_{\mathcal F_{\xi}}}(M,\,E)$.
Define the connection $$D\,=\,\nabla+\theta+\overline\theta_{h}\, ,$$ and consider the
curvature $R^{D}\,=\,D^{2}$ of $D$.
Then, by the assumptions on $\theta$ and $h$, we have that $$R^{D}\,\in\,
A^{2}_{B_{\mathcal F_{\xi}}}(M,\,{\rm End}(E))\, .$$
Define the {\it degree} of $E$ to be
 \[{\rm deg}(E)\,=\,\frac{\sqrt{-1}}{2\pi}\int_{M}{\rm Tr} (\Lambda R^{D})\, .
\]
Note that we have
$${\rm deg}(E)\,=\,\int_{M}c_{1, B_{\mathcal F_{\xi}}}(E)\wedge d\eta\wedge\eta\, ,$$ and hence
${\rm deg}(E)$ depends only on $E$.

We define the canonical (Chern) connection $\nabla^{h}$ on the transversely holomorphic Hermitian bundle
$(E,\, h)$ in the following way. Take local basic holomorphic frames $e^{\alpha}_{1},\,\cdots ,\,
e_{\alpha}^{\alpha}$ of $E$ with respect to an open covering $M\,=\,\bigcup_{\alpha} U_{\alpha}$. For the Hermitian 
matrices $H_{\alpha}\,=\,(h_{i\overline{j}}^{\alpha})$ with 
$h_{i\overline{j}}^{\alpha}\,=\,h(e^{\alpha}_{i},e^{\alpha}_{j})$, we define
\begin{equation}\label{nah}
\nabla^{h}\,=\,d+ 
H_{\alpha}^{-1}\partial_{\xi}H_{\alpha}
\end{equation}
on each $U_{\alpha}$.

Let us consider the canonical connection $D^{h}\,=\,\nabla^{h}+\theta+\overline\theta_{h}$
on $E$. We also define the operator 
$\partial_{E,h}\,:\,A^{p,q}_{B_{\mathcal F_{\xi}}}(M,\,E)\,\longrightarrow\,
A^{p+1,q}_{B_{\mathcal F_{\xi}}}(M,\,E)$ 
such that $$\partial_{E,h}\,=\,\partial_{\xi}+ H_{\alpha}^{-1}\partial_{\xi}H_{\alpha}$$ on each 
$U_{\alpha}$; so $\partial_{E,h}$ is the $(1,\, 0)$-component of $\nabla^{h}$.
 
We now define the stable Higgs bundles (cf. \cite{BH}, \cite{BS}). Denote by ${\mathcal 
O}_{B_{\mathcal F_{\xi}}}$ the sheaf of basic holomorphic functions on $M$, and for a 
transversely holomorphic vector bundle $E$ on $M$, denote by ${\mathcal O}_{B_{\mathcal F_{\xi}}}(E)$ 
the sheaf of basic holomorphic sections of $E$. Consider ${\mathcal O}_{B_{\mathcal 
F_{\xi}}}(E)$ as a coherent ${\mathcal O}_{B_{\mathcal F_{\xi}}}$-sheaf.

For a basic Higgs 
bundle $(E,\, \theta)$, a {\em sub-Higgs sheaf} of $(E,\, \theta)$ is a coherent 
${\mathcal O}_{B_{\mathcal F_{\xi}}}$-subsheaf $\mathcal V$ of ${\mathcal O}_{B_{\mathcal 
F_{\xi}}}(E)$ such that $\theta ({\mathcal V})\, \subset\, {\mathcal 
V}\otimes \Omega_{B_{\mathcal F_{\xi}}}$, where $\Omega_{B_{\mathcal F_{\xi}}}$ is the sheaf 
of basic holomorphic $1$-forms on $M$. By \cite[Proposition 3.21]{BH}, if ${\rm rk} 
(\mathcal V)<{\rm rk}(E)$ and ${\mathcal O}_{B_{\mathcal F_{\xi}}}(E)/\mathcal V$ is 
torsion-free, then there is a transversely analytic sub-variety $S\,\subset\, M$ of complex 
co-dimension at least 2 such that ${\mathcal V}$ is given by a transversely holomorphic 
sub-bundle $V\,\subset\, E$; the degree ${\rm deg}(\mathcal V)$ can be defined by integrating on 
this complement $M\setminus S$.

\begin{definition}\label{def1}
We say that a basic Higgs bundle $(E,\, \theta)$ is {\em stable} if $E$ admits a basic Hermitian
metric and for every 
sub-Higgs sheaf ${\mathcal V}$ of $(E,\, \theta)$ such that ${\rm rk} (\mathcal V)\,<\,{\rm 
rk}(E)$ and ${\mathcal O}_{B_{\mathcal F_{\xi}}}(E)/\mathcal V$ is torsion-free, 
the inequality
\[\frac{{\rm deg}(\mathcal V)}{{\rm rk} (\mathcal V)}\,<\,\frac{{\rm deg}(E)}{{\rm rk}(E)}
\]
holds.

A basic Higgs bundle $(E,\, \theta)$ is called {\em polystable} if
$$
(E,\, \theta)\,=\, \bigoplus_{i=1}^k (E_i,\, \theta_i)\, ,
$$
where each $(E_i,\, \theta_i)$ is a stable basic Higgs bundle with
\[\frac{{\rm deg}(\mathcal E_i)}{{\rm rk} (\mathcal E_i)}\,=\,\frac{{\rm deg}(E)}{{\rm rk}(E)}\, .
\]
\end{definition}

\begin{thm}\label{HigHe}
For a stable basic Higgs bundle $(E, \,\theta)$ over a compact Sasakian manifold $(M,\, (T^{1,0},
\, S, \,I),\, (\eta, \,\xi))$, there exists a basic Hermitian metric $h$ on $E$ such that
\[\Lambda R^{D^{h} \perp}\, =\,0\, ,
\]
where $R^{D^{h} \perp}$ is the trace-free part of the curvature $R^{D^{h}}$ of the
canonical connection $D^{h}$ associated to $h$.
\end{thm}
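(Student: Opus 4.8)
The plan is to reduce Theorem \ref{HigHe} to the corresponding statement for Higgs bundles over a compact Kähler manifold, following the strategy that Baraglia and Hekmati used for the degree-zero case but carrying along the Higgs field and the transverse Chern--Weil formalism developed above. The key point is that the basic objects on $M$ — the transversely holomorphic bundle $E$, the Higgs field $\theta\in A^{1,0}_{B_{\mathcal F_\xi}}(M,{\rm End}(E))$, the transversely Kähler form $d\eta$, and the candidate Hermitian metric $h$ — all live, locally over a transverse slice and globally over the basic cohomology, on the leaf space of $\mathcal F_\xi$, which in the quasi-regular case is a projective orbifold and in general is controlled by transverse Hodge theory (\cite{KT}, \cite{EKA}). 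I would first record that the space of basic Hermitian metrics on $E$ is acted on transitively by $A^0_{B_{\mathcal F_\xi}}(M,{\rm End}(E))$ that are positive self-adjoint, and that on this space one has a Donaldson-type functional whose gradient flow is the Hermitian--Yang--Mills--Higgs heat flow $h^{-1}\partial_t h=-(\sqrt{-1}\Lambda R^{D^h}-\lambda\,\mathrm{Id})$, with $\lambda$ fixed by $\deg(E)$ and $\mathrm{rk}(E)$; since everything is basic, this is a genuinely transverse parabolic equation and the relevant Laplacian is $\Delta_\xi$, which by the identities recorded before Theorem \ref{thm1} agrees with $\Delta$ on basic functions.

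The second step is the analytic core: short-time existence, long-time existence, and a subsequential limit for this flow. Short-time existence follows because, after passing to the transverse elliptic theory, the equation is a quasilinear parabolic system of the same shape as Simpson's; long-time existence follows from the standard maximum-principle bound on $|\Lambda R^{D^h}|$ using \eqref{eR} (so that the $\xi$-direction contributes nothing to the curvature and the evolution stays within basic tensors). The crucial estimate is the $C^0$-bound on the metric along the flow, which in the Kähler case is where stability enters via Uhlenbeck--Yau's argument; here I would transplant that argument using the transverse slice theorem quoted after Definition \ref{def1} (the result from \cite[Proposition 3.21]{BH} that a destabilizing sub-Higgs sheaf is a transversely holomorphic subbundle off a transverse subvariety of complex codimension $\ge 2$), so that a failure of the $C^0$-bound produces, by the usual weak-limit/Uhlenbeck compactness argument adapted to the basic setting, a weakly holomorphic $\theta$-invariant subbundle of $E$ violating the stability inequality of Definition \ref{def1}. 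Because stability here is a statement about \emph{basic} sub-Higgs sheaves and the degree is computed by integrating $c_{1,B_{\mathcal F_\xi}}(\mathcal V)\wedge d\eta\wedge\eta$, the Chern--Weil/Stokes manipulations all take place in the basic de Rham complex, and the key identity $\ast\omega=(\star_\xi\omega)\wedge\eta$ lets one convert between the $(2n+1)$-dimensional integrals on $M$ and the transverse $2n$-dimensional ones.

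Having obtained a basic Hermitian metric $h_\infty$ with $\sqrt{-1}\Lambda R^{D^{h_\infty}}=\lambda\,\mathrm{Id}$, the trace-free part satisfies $\Lambda R^{D^{h_\infty}\perp}=0$, which is exactly the assertion. I expect the main obstacle to be the regularity and compactness theory for the transverse heat flow: one must check that Uhlenbeck's weak compactness theorem, removable-singularity theorems, and the $\varepsilon$-regularity needed to extract a coherent destabilizing subsheaf all survive when ``elliptic/parabolic on $M$'' is replaced by ``transversely elliptic/parabolic along $\mathcal F_\xi$''. The cleanest route is probably to localize: over a small transverse ball the foliation is a product and the flow literally becomes Simpson's flow on a Kähler ball, so the local estimates are quoted verbatim, and only the globalization (patching local transverse slices, controlling the holonomy of $\mathcal F_\xi$) requires the Sasakian structure — specifically the properties of $\nabla^{TW}$ and \eqref{eR} ensuring that basic tensors remain basic under the flow and that the transverse metric $d\eta$ is genuinely Kähler transversally. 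Once the $C^0$-estimate is in place, the passage from stability to the existence of the solution, and the uniqueness of $h$ up to scale on each stable summand, follow exactly as in \cite{SimC}.
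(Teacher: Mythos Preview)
Your outline correctly identifies the architecture --- Donaldson functional, heat flow, $C^0$ bound from stability via a weakly holomorphic destabilizing sub-Higgs sheaf, convergence --- and from that point on it matches the paper closely: the paper proves exactly the estimate you describe (Proposition \ref{ESTm}) using Theorem \ref{TrUY} from \cite{BH}, and the convergence argument is Simpson's.

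The gap is in the \emph{existence} of the heat flow. The equation $h^{-1}\partial_t h=-\sqrt{-1}\Lambda R^{D^h\perp}$ on $M$ is not parabolic: it is only transversely parabolic, completely degenerate in the $\xi$-direction, so one cannot quote quasilinear parabolic theory on $M$. Your proposed fix --- localize to transverse slices where the foliation is a product and run Simpson's flow on a K\"ahler ball --- yields local a priori estimates but not a global flow: for an irregular Sasakian structure the Reeb leaves are dense, there is no global leaf space, and solutions on overlapping transverse slices have no reason to agree with each other, let alone patch to a flow on $M$. Saying ``the flow preserves basicness'' presupposes the existence of a solution, which is the issue.

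The paper's device to get around this is different from anything in your proposal and is the genuine new idea. One passes to the K\"ahler cone $C(M)=M\times\R^{>0}$ with its canonical K\"ahler structure $(\omega,J)$, pulls back $(E,\theta)$ to an honest Higgs bundle $(\widetilde E,\widetilde\theta)$ on the K\"ahler manifold $C(M)$, and solves the \emph{non-degenerate} Hermitian--Yang--Mills--Higgs heat equation on the compact K\"ahler manifold with boundary $M\times[1,1+\epsilon]$ under Neumann boundary conditions $\partial_r h|_{r=1,1+\epsilon}=0$ (Theorem \ref{Ellp}, via \cite{Ham}, \cite{Don}, \cite{SimC}). The Reeb flow $A$ acts by K\"ahler automorphisms of $C(M)$, the initial metric $\widetilde h_0=\pi_0^*h_0$ is $A$-invariant, and the operators $\widetilde D'',\widetilde D'_{h_0},\widetilde\Lambda$ all commute with $A$; hence the \emph{set of solutions} is $A$-invariant, and \emph{uniqueness} of the solution forces $\widetilde h_t$ itself to be basic. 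Restricting to $M\times\{1\}$ and using the Neumann condition identifies the pulled-back Chern connection and $\widetilde\Lambda\widetilde R^{\widetilde h_t\perp}$ with $\nabla^{h_t}$ and $\Lambda R^{D^{h_t}\perp}$, so one obtains a global flow of basic metrics on $M$ satisfying the transverse heat equation --- without ever developing transverse parabolic theory. After this, the convergence argument is as you sketch, with one further point you did not mention: elliptic regularity for the transversely elliptic $\Delta'_{h_0}$ is obtained by non-canonically completing it to a genuinely elliptic operator $\Box=(\nabla^h_\xi)^*\nabla^h_\xi+L$ on $A^0(S(E))$ (cf.\ \cite[Remark 2.2, Lemma 2.8]{BH}), which is also what ultimately shows the limit $h_\infty$ is smooth and basic.
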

 
Theorem \ref{HigHe} will be proved in the next section.

\begin{prop}\label{HtF}
Let $(E,\, \theta)$ be a basic Higgs bundle over a compact Sasakian manifold
$(M,\, (T^{1,0},\, S,\, I),\, (\eta,\, \xi))$. Suppose that $h$ is a basic Hermitian metric
on $E$ such that $\Lambda R^{D^{h} \perp}\,=\,0$.
If $c_{1, B_{\mathcal F_{\xi}}}(E)\,=\,0$ and $c_{2, B_{\mathcal F_{\xi}}}(E)\,=\,0$, then
the connection $D^{h}$ is flat.
\end{prop}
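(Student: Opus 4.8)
The plan is to mimic the classical Lübke--Simpson argument identifying the vanishing of basic Chern classes with flatness of the canonical connection, working throughout with the transverse Hodge/Kähler theory on $A^{\ast}_{B_{\mathcal F_\xi}}(M)$ set up in the previous sections. First I would observe that since $h$ is basic, the curvature $R^{D^h}$ lies in $A^2_{B_{\mathcal F_\xi}}(M,\,{\rm End}(E))$, and by the same Bochner--Kodaira type identities that produce the basic Kähler identities, one gets a Weitzenböck formula expressing a suitable integral of $|R^{D^h\perp}|^2$ against $(d\eta)^{n-2}\wedge\eta$ in terms of the basic Chern numbers $\int_M c_{2,B_{\mathcal F_\xi}}(E)\wedge(d\eta)^{n-2}\wedge\eta$ and $\int_M c_{1,B_{\mathcal F_\xi}}(E)^2\wedge(d\eta)^{n-2}\wedge\eta$; this is exactly the content that will be recorded as Corollary \ref{cor1}. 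The hypothesis $\Lambda R^{D^h\perp}=0$ makes $R^{D^h\perp}$ a primitive basic $(1,1)$-form with values in ${\rm End}(E)$ (its $(2,0)$ and $(0,2)$ components are handled separately using $\theta\wedge\theta=0$ and $\overline\partial_E\theta=0$), so the basic Lefschetz decomposition gives a definite-sign expression for the relevant Chern-number combination.

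Next I would feed in the hypotheses $c_{1,B_{\mathcal F_\xi}}(E)=0$ and $c_{2,B_{\mathcal F_\xi}}(E)=0$. Then both characteristic numbers above vanish, so the Weitzenböck/Chern--Weil integral identity forces $\int_M |R^{D^h\perp}|^2(d\eta)^{n-2}\wedge\eta=0$, and since $(d\eta)^{n-2}\wedge\eta$ is a positive transverse volume form (it is, up to a positive constant, the Riemannian volume form of the Webster metric restricted suitably), pointwise positivity of the integrand gives $R^{D^h\perp}=0$. That is, the full curvature $R^{D^h}$ is central: $R^{D^h}=\frac{1}{r}({\rm Tr}\,R^{D^h})\,{\rm Id}_E$.

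It then remains to kill the trace part. Here I would use $c_{1,B_{\mathcal F_\xi}}(E)=0$ more carefully: the basic $1$-form theory shows $c_{1,B_{\mathcal F_\xi}}(E,D^h)=\frac{\sqrt{-1}}{2\pi}{\rm Tr}\,R^{D^h}$ is a closed basic real $(1,1)$-form, and from $R^{D^h\perp}=0$ we have ${\rm Tr}\,R^{D^h}=\frac{1}{r}{\rm Tr}(R^{D^h})\cdot({\rm the\ identity\ scalar})$—more precisely $R^{D^h}$ is a scalar $2$-form times ${\rm Id}_E$, call that scalar $2$-form $\omega_E$, so that $c_{1,B_{\mathcal F_\xi}}(E,D^h)=\frac{r\sqrt{-1}}{2\pi}\omega_E$ up to sign. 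The vanishing of the basic cohomology class $c_{1,B_{\mathcal F_\xi}}(E)$ together with the basic $\partial_\xi\overline\partial_\xi$-lemma (valid since $M$ is Sasakian, hence transversely Kähler, so the Hodge decomposition of $H^\ast_{B_{\mathcal F_\xi}}(M)$ holds as already noted) lets me write $\omega_E=\sqrt{-1}\,\partial_\xi\overline\partial_\xi f$ for a basic function $f$; but $\Lambda R^{D^h\perp}=0$ and the trace part being pure also force $\Lambda\omega_E$ to be (a constant times) ${\rm deg}(E)/(r\,{\rm vol})$, and since $\deg(E)=\int_M c_{1,B_{\mathcal F_\xi}}(E)\wedge d\eta\wedge\eta=0$ we get $\Delta_\xi f=0$, hence $f$ constant and $\omega_E=0$. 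Therefore $R^{D^h}=0$, i.e. $D^h$ is flat.

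The main obstacle I anticipate is making the first step fully rigorous: establishing the precise Chern--Weil/Bochner identity relating $\int_M |R^{D^h\perp}|^2\,(d\eta)^{n-2}\wedge\eta$ to the two basic Chern numbers, including the correct treatment of the $(2,0)+(0,2)$ part of $R^{D^h\perp}$ coming from the Higgs field (one needs $\theta\wedge\theta=0$, $\overline\partial_E\theta=0$, and $\partial_{E,h}\overline\theta_h=0$ together with the $\Lambda$-primitivity to see that these pieces contribute with the right sign), and checking that the basic Lefschetz/Hodge machinery from Section \ref{se5} applies verbatim to ${\rm End}(E)$-valued basic forms. Once that identity is in hand the argument is a routine adaptation of Simpson's proof that a Higgs--Hermitian--Yang--Mills metric with vanishing Chern classes is flat.
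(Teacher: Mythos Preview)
Your overall strategy is exactly the paper's: its proof is the one-line reference to \cite[Proposition~3.4]{SimC} together with the Riemann bilinear relations for basic forms, and your plan for deducing $R^{D^{h}\perp}=0$ from the vanishing basic Chern numbers via the transverse Lefschetz decomposition is correct.

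The gap is in the trace part. You claim that $\Lambda R^{D^{h}\perp}=0$ together with ``the trace part being pure'' forces $\Lambda\omega_E$ to be the constant ${\rm deg}(E)/(r\,{\rm vol})$, but $\Lambda R^{D^{h}\perp}=0$ only says $\Lambda R^{D^{h}}=\frac{1}{r}{\rm Tr}(\Lambda R^{D^{h}})\,{\rm Id}_E$; it imposes nothing on the scalar \emph{function} $\Lambda\,{\rm Tr}(R^{D^{h}})$. Since ${\rm Tr}\,\theta$ is a basic holomorphic $1$-form and hence $\partial_\xi$-closed by transverse Hodge theory, one has ${\rm Tr}(R^{D^{h}})={\rm Tr}(R^{\nabla^{h}})$, which up to sign is $\partial_\xi\overline\partial_\xi\log\det h$; thus $\Lambda\omega_E$ is proportional to $\Delta_\xi\log\det h$, generically nonconstant, and your step $\Delta_\xi f=0$ does not follow. (On the trivial line bundle with $\theta=0$ and $h=e^{f}$ for $f$ basic and non-harmonic, all hypotheses hold yet $D^{h}$ is not flat.) The remedy Simpson actually uses is to normalize the determinant first: since $c_{1, B_{\mathcal F_{\xi}}}(E)=0$, the transverse $\partial_\xi\overline\partial_\xi$-lemma gives a basic $g$ with ${\rm Tr}(R^{\nabla^{e^{g}h}})=0$; this rescaling leaves $R^{D^{h}\perp}$ and the hypothesis unchanged, and after it the trace part vanishes outright. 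In the paper's application the metric comes from Theorem~\ref{HigHe}, whose heat flow keeps $\det h$ fixed, so this normalization is arranged at the outset.
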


\begin{proof}
The arguments in the proof of \cite[Proposition 3.4]{SimC} for the usual case
go through after using the Riemann bilinear relations for basic forms on $M$.
\end{proof}

\section{Proof of Theorem \ref{HigHe}}

\subsection{Preliminaries}\label{se6.1}

The proof of Theorem \ref{HigHe} will closely follow the proof of \cite[Theorem 1]{SimC}.

\begin{prop}\label{Ass3}
Let $(M,\, (T^{1,0}, \,S,\, I),\, (\eta,\, \xi))$ be a compact Sasakian manifold. 
Let $B\,\in\, {\mathbb R}^{>0}$ be a positive number.
There exist positive constants $C_{1}(B)$, $C_{2}(B)$ and $C_{3}(B)$ depending on $B$ and an
increasing function $a\,:\, [0,\,+\infty)\,\longrightarrow\, [0,\,+\infty)$ with $a(0)\,=\,0$,
$a(x)\,=\,x$ for $x\,>\,1$, such that 
for any positive basic function $f\,\in\, A^{0}_{B_{\mathcal F_{\xi}}}(M)$ on $M$
satisfying $\Delta_{\xi} f\,\le\, B$, the following two inequalities hold:
 \[\sup_{M} f\,\le\, C_{1}(B)+C_{2}(B)\int_{M}f
 \]
and
\[\ \sup_{M} f \,\le\, C_{3}(B)a\left( \int_{M}f\right)\, .
\]
\end{prop}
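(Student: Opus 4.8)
\textbf{Proof plan for Proposition \ref{Ass3}.}
The plan is to reduce everything to the classical Moser iteration / $L^p$--estimates for the basic Laplacian $\Delta_\xi$ on the compact foliated manifold $(M,\mathcal F_\xi)$, exactly as in \cite[Proposition 2.1]{SimC} for the K\"ahler case. The crucial observation enabling this reduction is that $\Delta_\xi f\,=\,\Delta f$ for any basic function $f\in A^0_{B_{\mathcal F_\xi}}(M)$, which was established in Section 4 (for $f\in A^0_{B_{\mathcal F_\xi}}(M)$ one has $\Delta_\xi f=\Delta f$ since $\delta_\xi$ and $\delta$ agree on basic $1$-forms). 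Hence a basic function with $\Delta_\xi f\le B$ is genuinely a function on the compact Riemannian manifold $(M,g_\eta)$ with $\Delta f\le B$ in the ordinary sense, and the standard elliptic theory applies verbatim.

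First I would prove the first inequality. Writing $\widetilde f\,=\,f-\frac{1}{\operatorname{vol}(M)}\int_M f$, the Green operator $G$ for $\Delta$ on $(M,g_\eta)$ gives $\widetilde f\,=\,G(\Delta f)$, and since the Green kernel is bounded above away from its diagonal singularity — more precisely, $G(x,y)\ge -c_0$ for a constant $c_0$ depending only on $(M,g_\eta)$, because $\dim M=2n+1\ge 3$ so the kernel singularity has a sign — one gets from $\Delta f\le B$ and $f>0$ the pointwise bound $\widetilde f(x)=\int_M G(x,y)\,\Delta f(y)\,dV(y)\le B c_0\operatorname{vol}(M)$; rearranging yields $\sup_M f\le C_1(B)+C_2(B)\int_M f$ with $C_1(B)=Bc_0\operatorname{vol}(M)$ and $C_2(B)=1/\operatorname{vol}(M)$. (One must also use $\int_M\Delta f=0$, which holds since $M$ is closed.) This step is essentially just properties of the Green's function and is routine once the identification $\Delta_\xi f=\Delta f$ is in place.

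Next I would establish the second, sharper inequality, which is where the function $a$ enters. The point of $a$ — increasing, $a(0)=0$, and $a(x)=x$ for $x>1$ — is to interpolate between the regime where $\int_M f$ is large (where the first inequality already does the job, since there $C_3(B)a(\int_M f)=C_3(B)\int_M f$ dominates $C_1(B)+C_2(B)\int_M f$ for $C_3$ large) and the regime $\int_M f\le 1$, where one needs $\sup_M f\to 0$ as $\int_M f\to 0$. For the small regime the standard device is the De Giorgi--Nash--Moser iteration: from $\Delta f\le B$ with $f\ge 0$ one derives, testing against powers $f^p$ and using the Sobolev inequality on $(M,g_\eta)$, a reverse--H\"older chain that bounds $\|f\|_{L^\infty}$ by $\|f\|_{L^1}$ up to a constant depending on $B$; to get the vanishing as $\int_M f\to0$ one notes the $B$--dependence is harmless and carefully tracks that the resulting bound is of the form $C(B)\psi(\int_M f)$ with $\psi$ continuous, $\psi(0)=0$. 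Defining $a$ to be (a majorant of) $\psi$ on $[0,1]$ and the identity on $(1,\infty)$, suitably smoothed/monotonized, gives the stated $a$ and $C_3(B)$.

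The main obstacle is that the literature statement \cite[Proposition 2.1]{SimC} is phrased for compact K\"ahler manifolds, and one must be sure the Sobolev inequality, the Green's function estimates, and the Moser iteration used there all depend only on the underlying compact Riemannian structure $(M,g_\eta)$ — which they do — so that no genuinely "transverse" or CR-specific input is needed beyond the identity $\Delta_\xi f=\Delta f$ on basic functions. Once one is comfortable that the basic-function constraint is automatically an honest PDE constraint on $M$, the proof is a verbatim transcription of Simpson's argument; the only care required is bookkeeping of the constants' dependence on $B$ and the construction of the universal function $a$.
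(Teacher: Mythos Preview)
Your proposal is correct and follows exactly the same approach as the paper: the paper's proof consists of the single observation that $\Delta_\xi f=\Delta f$ for basic functions $f$, after which the two inequalities are reduced to (and cited from) \cite[Proposition~2.1]{SimC} and \cite{DonI}. You have simply unpacked what those references contain, but the reduction step and the overall strategy are identical.
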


\begin{proof}
In view of $\Delta f\,=\,\Delta_{\xi} f$ for $f\,\in\, A^{0}_{B_{\mathcal F_{\xi}}}(M)$, it
suffices to prove the two inequalities for $f\,\in \,A^{0}_{B_{\mathcal F_{\xi}}}(M)$
satisfying $\Delta f\,\le\, B$. We next note that this is already proved in
\cite[Proposition 2.1]{SimC} and \cite{DonI}.
\end{proof}

Let $(E,\, \theta)$ be a basic Higgs bundle, equipped
with a basic Hermitian metric $h$, over a compact Sasakian manifold
$(M,\, (T^{1,0},\, S,\, I),\, (\eta,\, \xi))$.
We define the operator $$D^{\prime, h}_{E,\theta}\,=\,\partial_{E,h}+\overline\theta_{h}\,:\,
A^{\ast}_{B_{\mathcal F_{\xi}}}(M,\,E)\,\longrightarrow\,
A^{\ast+1}_{B_{\mathcal F_{\xi}}}(M,\,E)\, ,$$ where $\partial_{E,h}$ and $\overline\theta_{h}$
are defined as in \eqref{tst} and \eqref{nah} respectively.
As in \cite[Lemma 3.1]{SimC}, we have the following formulas:
\begin{enumerate}
\item \[\sqrt{-1}[\lambda,\,D^{\prime\prime}_{E,\theta}]\,=\,(D^{\prime, h}_{E,\theta})^{\ast}
\qquad{\rm and}\qquad \sqrt{-1}[\lambda,\,D^{\prime,h}_{E,\theta}]\,=\,-(D^{\prime\prime}_{E,\theta})^{\ast}\, ,\]
 where $(D^{\prime, h}_{E,\theta})^{\ast}\,=\,-\star_{h,\xi} \overline\partial_{E}
\star_{h,\xi}+\star_{h,\xi} \theta \star_{h,\xi}$ and $$(D^{\prime\prime}_{E,\theta})^{\ast}
\,=\,-\star_{h,\xi}\partial_{E,h}\star_{h,\xi} +
\star_{h,\xi} \overline\theta_{\theta} \star_{h,\xi}\, ;$$ note that they are the formal adjoints
of $D^{\prime, h}_{E,\theta}$ and $D^{\prime\prime}_{E,\theta}$ respectively for the $L^{2}$ inner product
$A^{r}_{B_{\mathcal F_{\xi}}}(M,\, E)\times A^{r}_{B_{\mathcal F_{\xi}}}(M,\,E)\,\ni\,
(\alpha,\beta)\,\longmapsto \,\int_{M} \langle \alpha,\, \beta\rangle$.

 \item For self-adjoint basic sections $\sigma, \tau\,\in \,
A^{0}_{B_{\mathcal F_{\xi}}}(M,\,{\rm end}(E))$,
 \[\vert D^{\prime\prime}_{E,\theta}(\sigma)\tau\vert_{h}^{2}\,=\,
-\sqrt{-1}\Lambda {\rm Tr}( D^{\prime\prime}_{E,\theta}(\sigma)\tau^{2}D^{\prime, h}_{E,
\theta}(\sigma)).
 \]

 \item If ${\mathbf k}\,=\,h\sigma$ for a basic positive self-adjoint section $\sigma\,\in\,
A^{0}_{B_{\mathcal F_{\xi}}}(M,\,{\rm end}(E))$, then
 \[D^{\prime, {\mathbf k}}_{E,\theta}\,=\,D^{\prime, h}_{E,\theta}+\sigma^{-1}D^{\prime, h}_{E,\theta}(\sigma)\]
 and
 \[\Delta_{h}^{\prime}(\sigma)\,=\,h\sqrt{-1}(\Lambda R^{D^{\mathbf k}}-\Lambda R^{D^{h}})+\sqrt{-1}\Lambda D^{\prime\prime}_{E,\theta}(\sigma)\sigma^{-1}D^{\prime, h}_{E,\theta}(\sigma)\, ,
 \]
 where $\Delta_{h}^{\prime}\,=\,(D^{\prime, h}_{E,\theta})^{\ast}D^{\prime, h}_{E,\theta}
\,=\,\sqrt{-1}\Lambda D^{\prime\prime}_{E,\theta}D^{\prime, h}_{E,\theta}$.
 
\item Also,
\begin{equation}\label{eqinq}
\Delta_{\xi} \log {\rm Tr} (\sigma)\,\le\,
2(\vert\Lambda R^{D^{\mathbf k}}\vert_{\mathbf k}+\vert\Lambda R^{D^{h}}\vert_{h})\, .
\end{equation}
\end{enumerate}

\subsection{Donaldson's functional}

Let $(E,\, \theta)$ be a basic Higgs vector bundle over the
Sasakian manifold $(M,\, (T^{1,0}, \,S,\, I),\, (\eta, 
\,\xi))$. Fix a basic Hermitian metric $h$ on $E$. Denote by $S(E)$ the smooth vector 
bundle of self-adjoint endomorphisms of $E$ corresponding to $h$. For smooth functions 
$\phi\,:\, \R\,\longrightarrow\, \R$ and $\Psi\,:\, \R\times \R\,\longrightarrow\, \R$, we
define the maps $\phi\,:\, S(E)\,\longrightarrow\, S(E)$ and 
$\Psi\,:\, S(E)\,\longrightarrow\, S({\rm End} E)$ as in \cite[Page 880]{SimC}. Consider
the $L^{p}$-completion 
$L_{B_{\mathcal F_{\xi}}}^{p}(S(E))$ of the space of basic sections of $S(E)$, and also the 
Sobolev space $L^{p,1}_{B_{\mathcal F_{\xi}}}(S(E))$ with the norm $\Arrowvert s 
\Arrowvert_{L^{p,1}_{B_{\mathcal F_{\xi}}}(S(E))} \,=\,\Arrowvert s 
\Arrowvert_{L^{p}}+\Arrowvert D^{\prime\prime}_{E,\theta}s \Arrowvert_{L^{p}}$ for any
section $s$ of $S(E)$. As done in \cite[Proposition 4.1]{SimC}, we can extend $\phi\,:\, \R
\,\longrightarrow\, \R$ and 
$\Psi\,:\, \R\times \R\,\longrightarrow\, \R$ to continuous maps $$\phi\,:\,
L_{B_{\mathcal F_{\xi}}}^{p}(S(E))\,\longrightarrow\, L_{B_{\mathcal F_{\xi}}}^{p}(S(E))\, ,$$
$$\Psi\,:\, L_{B_{\mathcal F_{\xi}}}^{p}(S(E))\,\longrightarrow\, {\rm 
Hom}(L^{p}_{B_{\mathcal F_{\xi}}}({\rm End}(E),\, L^{q}_{B_{\mathcal F_{\xi}}}({\rm End}(E))$$
and $\phi\,:\,L^{p,1}_{B_{\mathcal F_{\xi}}}(S(E))\,\longrightarrow\, L^{q,1}_{B_{\mathcal F_{\xi}}}(S(E))$ for 
$q\,<\,p$.
 
Denote by $\mathcal P$ the space of all basic Hermitian metrics on $E$.
We parametrize $\mathcal P$ by $A^{0}_{B_{\mathcal F_{\xi}}}(S(E))$ as
$A^{0}_{B_{\mathcal F_{\xi}}}(S(E))\,\ni\, \sigma\,\longmapsto \,h\exp ({\sigma})\,\in\, \mathcal P$.
For $h,\,h^{\prime}\,\in\, \mathcal P$ with $h^{\prime}\,=\, h\exp (s)$, define
\[M(h,\,h^{\prime})\,=\,\sqrt{-1}\int_{M} {\rm Tr}(s \Lambda R^{D^{h}})+
\int_{M}\left\langle \Psi_1 (s)(D^{\prime\prime}_{E,\theta}s), \,D^{\prime\prime}_{E,\theta}s\right\rangle_{h}\, ,
\]
where $\Psi_1\,:\, S(E)\,\longrightarrow\, S({\rm End} E)$ is defined by the function 
\[\Psi_1(\lambda_{1},\lambda_{2})\,=\,
\frac{e^{\lambda_{2}-\lambda_{1}}-(\lambda_{2}-\lambda_{1})-1}{(\lambda_{2}-\lambda_{1})^{2}}\, .
\]
By the same proof as of \cite[Proposition 5.1]{SimC}, for all $h,\,h^{\prime},\,h^{\prime\prime}
\,\in\, \mathcal P$, the identity
\[M(h,h^{\prime})+M(h^{\prime},h^{\prime\prime})\,=\,M(h,h^{\prime\prime})
\]
is obtained.

We have the following important estimate as in \cite[Proposition 5.3]{SimC}.

\begin{prop}\label{ESTm}
Fix a positive number $B$.
Suppose a basic Hermitian metric $h$ on $E$ satisfies the inequality
$\sup_{M}\vert \Lambda R^{D^{h}}\vert \,\le\, B$.
If $(E,\, \theta)$ is a stable
basic Higgs bundle, then there are positive constants $C_{1}$ and $C_{2}$ such that 
\[\sup_{M}\vert s\vert\,\le\, C_{1}+C_{2}M(h, h\exp (s))
\]
for every $s\,\in\, A^{0}_{B}(S(E))$ with ${\rm Tr}(s)\,=\,0$ and
$\sup_{M}\vert \Lambda R^{D^{he^{s}}}\vert \,\le\, B$.
\end{prop}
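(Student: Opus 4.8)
The plan is to follow Simpson's contradiction argument \cite[Proposition 5.3]{SimC}, transferred to the basic/transverse setting. Suppose the assertion fails. Then there is a sequence $s_i\,\in\, A^{0}_{B_{\mathcal F_{\xi}}}(S(E))$ with $\mathrm{Tr}(s_i)\,=\,0$, with $\sup_M|\Lambda R^{D^{he^{s_i}}}|\,\le\, B$, and with $l_i\,:=\,\sup_M|s_i|\,\ge\, i\bigl(1+M(h,\,he^{s_i})\bigr)$; in particular $l_i\,\to\,\infty$. Applying the pointwise estimate \eqref{eqinq} to $\sigma\,=\,e^{s_i}$ (so that $\Delta_\xi\log\mathrm{Tr}(e^{s_i})\,\le\, 4B$) together with Proposition \ref{Ass3}, which is exactly the place where the compactness of $M$ and transverse Hodge theory enter, one gets $l_i\,\le\, C_3(B')\,a\!\left(\int_M|s_i|\right)$ for a suitable $B'$ depending on $B$; since $l_i\,\to\,\infty$ and $a(x)=x$ for $x>1$, this forces $\int_M|u_i|$ to stay bounded away from $0$, where $u_i\,:=\,s_i/l_i$. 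The same estimate \eqref{eqinq}, combined with a Bochner-type integration by parts and Proposition \ref{Ass3}, bounds $u_i$ in the basic Sobolev space $L^{p,1}_{B_{\mathcal F_{\xi}}}(S(E))$; passing to a subsequence, $u_i\,\rightharpoonup\, u_\infty$ weakly in $L^{p,1}_{B_{\mathcal F_{\xi}}}$ and strongly in $L^{p}_{B_{\mathcal F_{\xi}}}$, with $\mathrm{Tr}(u_\infty)\,=\,0$, $\sup_M|u_\infty|\,\le\, 1$ and $u_\infty\,\ne\,0$.

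The heart of the matter is the limiting inequality: for every smooth $\Phi\,:\,\R\times\R\,\to\,\R$ with $\Phi(\lambda_1,\lambda_2)\,<\,(\lambda_1-\lambda_2)^{-1}$ whenever $\lambda_1\,>\,\lambda_2$, one has
\[
\sqrt{-1}\int_M\mathrm{Tr}(u_\infty\,\Lambda R^{D^{h}})\,+\,\int_M\big\langle\Phi(u_\infty)(D^{\prime\prime}_{E,\theta}u_\infty),\,D^{\prime\prime}_{E,\theta}u_\infty\big\rangle_h\,\le\,0\, .
\]
To obtain it, divide $M(h,\,he^{s_i})$ by $l_i$ and use $D^{\prime\prime}_{E,\theta}s_i\,=\,l_i\,D^{\prime\prime}_{E,\theta}u_i$ to rewrite $\tfrac{1}{l_i}M(h,\,he^{s_i})\,=\,\sqrt{-1}\int_M\mathrm{Tr}(u_i\,\Lambda R^{D^{h}})+\int_M\langle\bigl(l_i\Psi_1(s_i)\bigr)(D^{\prime\prime}_{E,\theta}u_i),\,D^{\prime\prime}_{E,\theta}u_i\rangle$. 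Since $\Psi_1\,\ge\,0$ and $l_i\Psi_1(l_i\lambda_1,l_i\lambda_2)\,\to\,(\lambda_1-\lambda_2)^{-1}$ for $\lambda_1\,>\,\lambda_2$, on the (uniformly bounded) spectrum of $u_i$ one has $l_i\Psi_1(s_i)\,\ge\,\Phi(u_i)$ for $i$ large, so $\tfrac{1}{l_i}M(h,\,he^{s_i})$ dominates the corresponding expression formed from $u_i$; letting $i\,\to\,\infty$, using $\tfrac{1}{l_i}M(h,\,he^{s_i})\,<\,1/i\,\to\,0$ together with strong $L^p$ convergence of $u_i$ and the weak lower semicontinuity of the nonnegative quadratic term, the displayed inequality follows.

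Finally one carries out Simpson's construction of a destabilizing sub-Higgs sheaf. Feeding into the inequality above a sequence of functions $\Phi$ approximating the indicator-type functions attached to the eigenvalues of $u_\infty$ forces the eigenvalues of $u_\infty$ to be constant and, for each eigenvalue $\lambda$, the spectral projection $\pi_\lambda$ onto the span of eigenspaces with eigenvalue below $\lambda$ to lie in $L^{p,1}_{B_{\mathcal F_{\xi}}}$ and to satisfy $(\mathrm{Id}-\pi_\lambda)\,D^{\prime\prime}_{E,\theta}\,\pi_\lambda\,=\,0$, i.e. $\pi_\lambda$ defines a weakly holomorphic $\theta$-invariant subsheaf. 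By the basic/transverse regularity statement \cite[Proposition 3.21]{BH} (the analog of the Uhlenbeck--Yau regularity theorem \cite{UY}), such a $\pi_\lambda$ arises from a genuine sub-Higgs sheaf $\mathcal V_\lambda$ of $(E,\,\theta)$ in the sense of Section \ref{se5}, with a well-defined basic degree computed on the complement of a transversely analytic set of complex codimension at least $2$. Expressing $\sqrt{-1}\int_M\mathrm{Tr}(u_\infty\,\Lambda R^{D^{h}})$ via the basic Chern--Weil formula for the degree (Section \ref{se5}) as a combination $-\sum_j(\lambda_{j+1}-\lambda_j)\deg(\mathcal V_{\lambda_j})$, and combining with $\mathrm{Tr}(u_\infty)\,=\,0$ and $u_\infty\,\ne\,0$, the limiting inequality yields $\sum_j(\lambda_{j+1}-\lambda_j)\bigl(\mathrm{rk}(E)\deg(\mathcal V_{\lambda_j})-\mathrm{rk}(\mathcal V_{\lambda_j})\deg(E)\bigr)\,\ge\,0$ with at least one nontrivial proper $\mathcal V_{\lambda_j}$, contradicting the stability of $(E,\,\theta)$ (Definition \ref{def1}).

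I expect this last step --- manufacturing the destabilizing sub-Higgs sheaf out of the $L^{p,1}$ limit $u_\infty$ --- to be the main obstacle, since it relies on the transverse analog of Uhlenbeck--Yau's regularity theorem for weakly holomorphic subbundles and on careful bookkeeping of basic degrees across the codimension-$\ge 2$ transversely analytic singular set. The purely analytic inputs (Proposition \ref{Ass3}, basic Sobolev embeddings and transverse elliptic theory) and the derivation of the limiting inequality are, by contrast, fairly routine adaptations of \cite{SimC} once \eqref{eqinq} and the formulas of Section \ref{se6.1} are available.
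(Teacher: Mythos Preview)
Your proposal is correct and follows essentially the same route as the paper: both reduce to Simpson's contradiction argument \cite[Proposition~5.3]{SimC}, first using \eqref{eqinq} and Proposition~\ref{Ass3} to control $\sup_M|s|$ by $\|s\|_{L^1}$, then rescaling a violating sequence to produce a nontrivial $L^{p,1}$ limit whose spectral projections yield a transversely weakly holomorphic $\theta$-invariant subbundle that contradicts stability. One small correction: the regularity input you need for the weakly holomorphic subbundle is Theorem~\ref{TrUY} (i.e.\ \cite[Theorem~5.7]{BH}), not \cite[Proposition~3.21]{BH}, which concerns ordinary coherent subsheaves with torsion-free quotient rather than $L^{2,1}$ projections.
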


Proposition \ref{ESTm} will be proved after Theorem \ref{TrUY}.

\begin{definition}
Let $E$ be a transversely holomorphic vector bundle, over a compact Sasakian manifold $M$,
equipped with a basic Hermitian metric $h$.
 A {\em transversely weakly holomorphic subbundle} of $E$ is $\Pi\,\in\, L^{2,1}_{B}(S(E))$ satisfying
the following two conditions
\[\Pi\,=\,\Pi^{2}\qquad {\rm and } \qquad ({\rm Id}_{E}-\Pi)\overline\partial_{{\rm End}(E)}
(\Pi)\,=\,0\, .
\] 
\end{definition}
 
In \cite[Theorem 5.7]{BH}, the following result is proved.
 
\begin{thm}[\cite{BH}]\label{TrUY}
Let $\Pi$ be a transversely weakly holomorphic subbundle of $E$.
Then there is a transversely coherent sheaf $\mathcal V$ and a transverse analytic subset
$S\,\subset\, M$ such that the following three hold:
\begin{enumerate}
\item The complex codimension of $S$ in $M$ is at least two.

\item The restriction of $\Pi$ to $M\setminus S$ is smooth and defines a transversely holomorphic 
subbundle $V\,\subset\, E\vert_{M\setminus S}$.
 
\item The restriction of $\mathcal V$ to $M\setminus S$ is the sheaf of basic holomorphic sections of $V$.
Moreover these properties imply that ${\mathcal O}_{B_{\mathcal F_{\xi}}}(E)/\mathcal V$ is
torsion-free.
\end{enumerate}
\end{thm}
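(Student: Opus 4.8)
The plan is to adapt the classical argument of Uhlenbeck--Yau (following the streamlined treatment of Simpson \cite[\S\S3--4]{SimC}) to the transverse setting, exploiting that all the analytic inputs are already available. First I would recall that for $\Pi \in L^{2,1}_{B}(S(E))$ a weakly holomorphic projection, the equations $\Pi = \Pi^2$ and $(\mathrm{Id}_E-\Pi)\overline\partial_{\mathrm{End}(E)}(\Pi)=0$ force $\overline\partial_{\mathrm{End}(E)}(\Pi) \in L^2_{B}$ to lie in $\Pi \cdot A^{0,1}_{B_{\mathcal F_\xi}}\cdot(\mathrm{Id}_E-\Pi)$, and moreover that $\Pi$ has rank equal to an integer a.e.\ by the pointwise algebraic constraint $\Pi=\Pi^*=\Pi^2$. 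The key point is regularity: away from the ``bad set'' where the smooth structure degenerates, elliptic regularity for the transverse $\overline\partial$-operator (which is elliptic transversally, hence hypoelliptic along $M$ since the foliation direction is handled by the basic constraint) upgrades $\Pi$ from $L^{2,1}_{B}$ to a smooth section. Because $E$ is basic, $\overline\partial_E$ restricts to the basic complex $A^{p,q}_{B_{\mathcal F_\xi}}(M,E)$, so the standard interior elliptic estimates of transverse Hodge theory (\cite{KT}, \cite{EKA}) apply verbatim.

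The heart of the matter is the construction of the exceptional set $S$ and the proof that it has complex transverse codimension $\geq 2$. Here I would localize: on a foliation chart $U_\alpha$ the transverse structure is modeled on an open set of $\mathbb C^n$ (with the leaf direction factored out), and the basic data descend to genuine holomorphic data on the local transverse quotient. One then invokes Uhlenbeck--Yau's theorem \cite{UY}, in the form given in \cite[Theorem 4.3]{SimC}, on these local transverse models: the current $\overline\partial\Pi$ extended to a coherent subsheaf is given, off a genuine analytic set of codimension $\geq 2$, by a holomorphic subbundle. The compatibility of these local descriptions on overlaps $U_\alpha \cap U_\beta$ is guaranteed because the transition functions $f_{\alpha\beta}$ are basic and holomorphic, so the locally-defined analytic sets and subbundles glue to a transversely analytic subset $S \subset M$ and a transversely coherent subsheaf $\mathcal V$ on $M$. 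Conditions (1), (2) and (3) then follow from their local counterparts. The torsion-freeness of ${\mathcal O}_{B_{\mathcal F_\xi}}(E)/\mathcal V$ is local and follows because, on the transverse model, the quotient of a locally free sheaf by the saturation of a subsheaf (which is what the construction produces) is torsion-free by definition of saturation.

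The step I expect to be the main obstacle is making rigorous the passage ``global basic object $\leftrightarrow$ local transverse holomorphic object,'' in particular controlling what happens along the $\xi$-direction near $S$: one must check that the singular set, a priori only transversely analytic, is genuinely a closed subset of $M$ saturated by leaves (so that ``transverse codimension $\geq 2$'' is meaningful and so that integration of $\mathrm{deg}(\mathcal V)$ over $M \setminus S$ converges — this last being needed downstream in Definition \ref{def1} and Theorem \ref{HigHe}). In the regular and quasi-regular cases this is transparent since $M$ fibers (in the orbifold sense) over a projective variety; for irregular Sasakian manifolds one instead works leaf-chart by leaf-chart, using that $\xi$ is a Killing field for $g_\eta$ (Sasakian condition) and that $\Pi$ being basic means $\mathcal L_\xi \Pi = 0$, so every leaf is entirely inside $S$ or entirely outside it, and the $L^2$-norms computed fiberwise over leaves are finite because $M$ is compact. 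Since this is precisely \cite[Theorem 5.7]{BH}, I would not reprove it in full but cite it, noting only that the basicness of $E$ and of the Higgs field, together with Sasakian-ness of $M$, are exactly the hypotheses under which \cite{BH} establishes the result; the final sentence of condition (3) (torsion-freeness) is then immediate from the saturation construction.
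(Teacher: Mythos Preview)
The paper does not prove this theorem at all: it is stated with attribution to \cite[Theorem~5.7]{BH} and used as a black box. Your proposal is therefore strictly more detailed than what the paper offers---you sketch the Uhlenbeck--Yau/Simpson regularity argument adapted to the transverse setting (localizing to foliation charts, descending to the transverse quotient, invoking the classical result there, and gluing), which is indeed the strategy carried out in \cite{BH}. Since you too ultimately defer to \cite{BH} for the full details, your treatment is consistent with the paper's, only more expository; there is no discrepancy to flag.
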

 
Now we can prove Proposition \ref{ESTm} in the same way as done for the proof of \cite[Proposition 
5.3]{SimC}.

\begin{proof}[{Proof of Proposition \ref{ESTm}}]
For a section $s$ as in the statement of the proposition,
by the inequality $$\Delta_{\xi} \log {\rm Tr} (\sigma)\,
\le\, 2(\vert\Lambda R^{D^{\mathbf k}}\vert_{\mathbf k}+\vert\Lambda R^{D^{h}}\vert_{h})$$
in \eqref{eqinq} we have
that $\Delta_{\xi} \vert s\vert \,\le\, 4B$, and hence using Proposition \ref{Ass3} it
follows that $\sup_{M}\vert s\vert\,\le\, C_{1}+C_{2}\Arrowvert s\Arrowvert _{L^1}$.

Assume that the estimate in the statement of the proposition does not hold
in the following sense.
We can find a sequence $s_{i}$ of basic sections of $S(E)$ with ${\rm Tr}(s_{i})\,=\,0$ such that 
$\lim_{i\to \infty} \Arrowvert s_{i}\Arrowvert _{L^1}\,=\,+\infty$ and
$\Arrowvert s_{i}\Arrowvert _{L^1}\,\ge\, M(h,\, h\exp (s_{i}))$.

Then, by the same arguments given between \cite[Lemma 5.4]{SimC} and \cite[Lemma 5.7]{SimC}, we have
a transversely weakly holomorphic subbundle $\Pi$ of $E$ such that $({\rm Id}_{E}-\Pi)\theta(\Pi)
\,=\,0$, and 
the inequality 
\[\frac{{\rm deg}(\Pi)}{{\rm Tr}(\Pi)}\,\ge\, \frac{{\rm deg}(E)}{{\rm rk}(E)}
\]
holds, where 
\[{\rm deg}(\Pi)\,=\,
\sqrt{-1}\int_{M}{\rm Tr}(\Pi\Lambda R^{D^{h}})-\int_{M}\vert D^{\prime\prime}_{E,\theta}\Pi\vert^{2}\, .
\]
We have a sub-Higgs sheaf $\mathcal V$ of $(E, \,\theta)$ satisfying the properties in
Theorem \ref{TrUY}.
We conclude that ${\rm deg}(\Pi)\,=\,{\rm deg}(\mathcal V)$ as in \cite[Lemma 3.2]{SimC},
and this contradicts the given stability condition. This completes the
proof of Proposition \ref{ESTm}
\end{proof}
 
\subsection{The heat equation on K\"ahler cone}
 
For a compact Sasakian manifold $$(M,\, (T^{1,0},\, S,\, I), \,(\eta,\, \xi))\, ,$$ consider
the cone $C(M)\,=\,M\times \R^{>0}$
and define the real $2$-form $\omega\,\in\, A^{2}(C(M))$ to be
\[\omega\,=\,2rdr\wedge\eta+r^{2}d\eta\, ;
\]
also, define the bundle homomorphism $J\,:\,TC(M)\,\longrightarrow\, TC(M)$ by
\begin{itemize}
\item $J(X)\,=\,I(X)$ for $X\,\in\, S$,

\item $J\left(r\frac{\partial}{\partial r}\right)\,=\,-\xi$, and

\item $J(\xi) \,=\,r\frac{\partial}{\partial r}$, where $r$ is the parameter of $\R^{>0}$.
\end{itemize}
Then, the pair $(\omega,\, J)$ is a K\"ahler structure on the complex manifold $C(M)$.

Consider the real $1$-dimensional foliation $\overline{\mathcal F}_{\xi}$ on $C(M)$ generated by 
$\xi$. Denote by $A$ the $1$-parameter group of automorphisms of the K\"ahler manifold 
$C(M)$ corresponding to $\overline{\mathcal F}_{\xi}$.
Then the space $$A^{\ast}(C(M))^{A}\,\subset\,A^{\ast}(C(M))$$ of $A$-invariant differential forms on $C(M)$ 
contains the basic de Rham complex $A^{\ast}_{B_{\overline{\mathcal F}_{\xi}}}(C(M))$. In 
particular, we have $A^{0}(C(M))^{A}\,=\,A^{0}_{B_{\overline{\mathcal F}_{\xi}}}(C(M))$. For a 
complex basic vector bundle $E$ over $C(M)$, we can naturally define the $A$-action on 
$A^{\ast}(C(M),\, E)$ so that for $a\,\in\, A$, $\omega\,\in\, A^{r}(C(M))$ and $s
\,\in\, A^{0}_{B_{\overline{\mathcal F}_{\xi}}}(C(M),\, E)$, 
we have $a(\omega \otimes s)\,=\,(a^{\ast}\omega)\otimes s$.
 
Let $(E,\, \theta)$ be a basic Higgs bundle over $(M,\, (T^{1,0}, \,S,\, I),\, (\eta,\, 
\xi))$. Consider the pair $(\widetilde{E},\, \widetilde\theta)$ defined by the pull back of 
$(E,\,\theta)$ using the projection
\begin{equation}\label{pi0}
\pi_0\, :\, C(M)\,\longrightarrow\, M
\end{equation}
defined by $(x,\, t)\, \longmapsto\, x$. Then $(\widetilde{E},\,
\widetilde\theta)$ is a 
usual Higgs bundle over the K\"ahler manifold $(C(M),\,\omega, \,J)$. Consider the operator 
$$\widetilde{D}^{\prime\prime}\,=\,\overline\partial_{\widetilde{E}}+\theta 
\,:\,A^{\ast}(C(M),\,\widetilde{E})\,\longrightarrow\, A^{\ast+1}(C(M),\,\widetilde{E})\, ,$$ where 
$\overline\partial_{\widetilde{E}}$ is the usual Dolbeault operator for the holomorphic 
vector bundle $\widetilde{E}$. Let $\widetilde{h}$ be a Hermitian metric on 
$\widetilde{E}$. Assume that $\widetilde{h}$ is basic, i.e., $\widetilde{h}\,\in\, 
A^{0}_{B_{\overline{\mathcal F}_{\xi}}}(C(M),\,\widetilde{E}^{\ast}\otimes 
\overline{\widetilde{E}}^{\ast})$. Then, we have the operator $$\partial_{E, \widetilde{h}}\,:\, 
A^{p,q}(C(M),\,\widetilde{E})\,\longrightarrow\, A^{p,q+1}(C(M),\,\widetilde{E})$$ such that 
$\overline\partial_{\widetilde{E}}+\partial_{E, \widetilde{h}}$ is the canonical
Chern connection 
for the Hermitian holomorphic vector bundle $(\widetilde{E}, \,\widetilde{h})$. Define 
$\overline{\widetilde\theta}_{\widetilde{h}}\,\in\, A^{0,1}(C(M),\,{\rm End}(\widetilde{E}))$ by 
$$h(\widetilde\theta e_{1},\,e_{2})\,=\,\widetilde{h}(e_{1},\, 
\overline{\widetilde\theta}_{\widetilde{h}}e_{2})$$ and also define the operator 
$\widetilde{D}^{\prime}_{\widetilde{h}}\,=\,\partial_{E, 
\widetilde{h}}+\overline{\widetilde\theta}_{\widetilde{h}}\,:\,A^{\ast}(C(M),\,\widetilde{E})
\,\longrightarrow\, 
A^{\ast+1}(C(M),\,\widetilde{E})$. The curvature of 
the connection $\widetilde{D}^{\prime\prime}+\widetilde{D}^{\prime}_{\widetilde{h}}$ will
be denoted by $\widetilde{R}^{\widetilde{h}}$.
 
Denote by $\widetilde\Lambda$ the formal adjoint of the Lefschetz operator associated with the
K\"ahler form $\omega$ on $C(M)$. Consider the heat equation 
\begin{equation}\label{HE}
\widetilde{h}^{-1}_{t}\frac{d\widetilde{h}_{t
}}{dt}\,=\,-\sqrt{-1}\widetilde\Lambda \widetilde{R}^{\widetilde{h}_{t} \perp}\, ,
\end{equation}
where $\widetilde{R}^{\widetilde{h}_{t} \perp}$ is the trace-free part of
the curvature $\widetilde{R}^{\widetilde{h}_{t} }$.

Fix a basic Hermitian metric $h_{0}$ on the basic holomorphic bundle $E$ over the Sasakian
manifold $M$. Let $\widetilde{h}_{0}\, =\, \pi^*_0 h_0$ be the pull-back of $h_{0}$
to a Hermitian metric on $\widetilde{E}\, :=\, \pi^*_0 E$, where $\pi_0$ is the
projection in \eqref{pi0}.

Write $\widetilde{h}_{0}^{-1}\widetilde{h}_{t}\,=\,\sigma_{t}$.
Then the equation in \eqref{HE} becomes 
 \[\left(\frac{d}{dt}+\Delta^{\prime}_{\widetilde{h}_{0}}\right)\sigma_{t}\,=\,
-\sqrt{-1}\sigma_{t}\widetilde\Lambda \widetilde{R}^{\widetilde{h}_{0} \perp}+
\sqrt{-1}\widetilde\Lambda \widetilde{D}^{\prime\prime}(\sigma_{t})\sigma^{-1}_{t}
\widetilde{D}^{\prime}_{\widetilde{h}_{0}}(\sigma_{t})\, ,
 \]
where $\Delta^{\prime}_{h_{0}}$ is the Laplacian operator of $\widetilde{D}^{\prime}_{\widetilde{h_{0}}}$.
Recall that $A$ is a group of automorphisms of the K\"ahler manifold $C(M)$. Since the
Hermitian metric $h_{0}$ on $E$ is basic,
the action of $A$ on $A^{0}(C(M),\,{\rm End}(\widetilde{E}))$ commutes with the operators
$\widetilde{D}^{\prime\prime}$, $\widetilde{D}^{\prime}_{h_{0}}$, $\widetilde\Lambda$
and $\Delta^{\prime}_{h_{0}}$, and we have that $$\widetilde\Lambda\widetilde{R}^{\widetilde{h_{0}} }
\,\in\, A^{0}_{B_{\overline{\mathcal F}_{\xi}}}(C(M),\,{\rm End}(\widetilde{E}))\, .$$
Thus, the set of solutions of the heat equation \eqref{HE} is invariant under the action of $A$.

The positive function $ r^{2}$ on $C(M)$ is plurisubharmonic, because
$\sqrt{-1}\partial\overline\partial r^{2}\,=\,\omega$.
Just as the results in \cite[Section 6]{SimC} are derived using the arguments in
\cite{Ham} and \cite{Don}, we have the following.

\begin{thm}\label{Ellp}
Let $\epsilon$ be a positive real number.
There exists a unique solution $\widetilde{h}$, defined for all time $(0,\,+\infty)$, of the heat equation 
 \[\widetilde{h}_{t}^{-1}\frac{d\widetilde{h}_{t}}{dt}\,=\,-\sqrt{-1}\widetilde\Lambda \widetilde{R}^{\widetilde{h}_{t} \perp} \]
on the compact manifold $M\times [1,\,1+\epsilon]$ with boundary satisfying
$${\rm det}(\widetilde{h}_{0})\,=\,{\rm det}(\widetilde{h}_{t})\, , \ \ \widetilde{h_{t}}_{\vert t=0}
\,=\,\widetilde{h}_{0}$$ together with the Neumann boundary 
condition $\frac{\partial h}{\partial r} \big{\vert}_{ r=1, 1+\epsilon}\,=\,0$.
\end{thm}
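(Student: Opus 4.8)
The plan is to reduce Theorem \ref{Ellp} to the known short-time and long-time existence theory for the Hermitian--Yang--Mills heat flow on compact K\"ahler manifolds with boundary, applied to the K\"ahler manifold $M\times[1,1+\epsilon]$ sitting inside the cone $C(M)$, and then to exploit $A$-invariance to land back in the basic category. First I would observe that $M\times[1,1+\epsilon]$, with the restriction of $(\omega,J)$ from $C(M)$, is a compact K\"ahler manifold with smooth boundary $M\times\{1\}\sqcup M\times\{1+\epsilon\}$, and $(\widetilde E,\widetilde\theta,\widetilde h_0)$ restricts to a Higgs bundle with Hermitian metric on it. The parabolic evolution \eqref{HE} for the trace-free curvature, coupled with the determinant constraint $\det(\widetilde h_t)=\det(\widetilde h_0)$ (which is automatically propagated since the right-hand side is trace-free) and the Neumann condition $\partial h/\partial r=0$ on the two boundary components, is exactly the Donaldson--Simpson heat flow studied in \cite[Section 6]{SimC} using the estimates of Hamilton \cite{Ham} and Donaldson \cite{Don}; short-time existence comes from standard parabolic theory for this quasilinear system, and the a priori $C^0$ estimate needed for long-time existence follows from the maximum principle applied to $\Delta\log\operatorname{Tr}(\sigma_t)$ together with the inequality \eqref{eqinq}, exactly as in the compact K\"ahler case, the Neumann condition ensuring there is no boundary contribution in the maximum principle.

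Next I would address why the solution stays basic, i.e.\ $A$-invariant. Since $h_0$ is a basic metric on $E$, its pull-back $\widetilde h_0$ is $A$-invariant, and the operators $\widetilde D''$, $\widetilde D'_{\widetilde h_0}$, $\widetilde\Lambda$ and $\Delta'_{\widetilde h_0}$ all commute with the $A$-action, as recorded just before the statement of Theorem \ref{Ellp}; consequently the evolution equation for $\sigma_t=\widetilde h_0^{-1}\widetilde h_t$ is $A$-equivariant. By uniqueness of solutions to the parabolic initial-boundary value problem, and the fact that $a^*\sigma_t$ solves the same problem with the same initial and boundary data for each $a\in A$, we get $a^*\sigma_t=\sigma_t$ for all $t$, so $\sigma_t\in A^0_{B_{\overline{\mathcal F}_\xi}}(C(M),\operatorname{End}\widetilde E)$, hence $\widetilde h_t$ is basic for all $t$. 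This is where the product structure $M\times[1,1+\epsilon]$ rather than an abstract manifold with boundary is convenient: the Neumann condition $\partial h/\partial r=0$ is $A$-invariant and compatible with the foliation, since $\xi$ is tangent to the $M$-factor.

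For uniqueness I would argue as in \cite[Section 6]{SimC}: given two solutions, the distance function $\operatorname{Tr}(\sigma_t^{(1)}(\sigma_t^{(2)})^{-1})+\operatorname{Tr}(\sigma_t^{(2)}(\sigma_t^{(1)})^{-1})$ satisfies a differential inequality forcing it to stay at its initial value, again using the Neumann boundary condition to kill the boundary term in the integration by parts; this is the standard Donaldson argument and goes through verbatim since all the structures involved (curvature, Higgs field, Laplacian) are the pull-backs from $M$ and behave exactly as in the compact case. The main obstacle I anticipate is the interplay between the noncompactness of the cone and the boundary conditions: one must be careful that working on the slab $M\times[1,1+\epsilon]$ is legitimate, i.e.\ that the K\"ahler metric $\omega=2r\,dr\wedge\eta+r^2 d\eta$ restricted to the slab is a genuine smooth K\"ahler metric up to and including the boundary (it is, since $r\geq 1$ there, so there is no cone-point degeneracy), and that the estimates of Hamilton and Donaldson, originally stated for either closed manifolds or for bounded domains with plurisubharmonic exhaustion, apply to this compact-with-boundary setting; the plurisubharmonicity of $r^2$ noted just before the theorem is precisely what is invoked to import those estimates. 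Once these points are checked, the theorem follows by citing \cite{Ham}, \cite{Don} and \cite[Section 6]{SimC} as the paper indicates.
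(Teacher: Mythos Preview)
Your proposal is correct and follows the same route as the paper, which does not give a standalone proof of Theorem~\ref{Ellp} but simply records it as a consequence of the heat-flow theory in \cite[Section~6]{SimC}, \cite{Ham}, and \cite{Don}, invoking the plurisubharmonicity of $r^{2}$ to place the slab $M\times[1,1+\epsilon]$ in the setting covered by those references. One small remark: your second paragraph, establishing that the solution $\widetilde h_t$ is basic via $A$-invariance and uniqueness, is not part of the content of Theorem~\ref{Ellp} itself---the theorem only asserts existence and uniqueness of the flow on the K\"ahler slab---and in the paper this $A$-invariance argument appears instead at the beginning of Section~6.4 as the first step in the proof of Theorem~\ref{HigHe}; your reasoning there is nonetheless exactly what the paper does.
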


\subsection{Proof of Theorem \ref{HigHe}}

Consider the solution $\widetilde{h}_{t}$ in Theorem \ref{Ellp}.
It was observed above that the set of solutions of the heat equation is
$A$-invariant. Therefore, we conclude that $\widetilde {h_{t}}$ is basic by the uniqueness
property. Define the Hermitian metrics $h_{t}$ on $E$ by the pull-backs of $\widetilde {h_{t}}$
for the embedding $M\,\longrightarrow\, M\times [1,\,1+\epsilon]$
defined by $x\, \longmapsto\, (x,\, 1)$. In view of the Neumann boundary 
condition $\frac{\partial h}{\partial r} \big{\vert}_{ r=1, 1+\epsilon}\,=\,0$
in Theorem \ref{Ellp}, we conclude that
the pull-back, by this embedding, of the canonical Chern connection $\overline\partial_{\widetilde{E}}+
\partial_{E, \widetilde{h}_{t}}$, on $(\widetilde{E},\, \widetilde{h}_{t})$, is identified with the
canonical Chern connection $\nabla^{h_{t}}$ on $E$, and moreover the pull-back of 
$(\widetilde\Lambda \widetilde{R}^{\widetilde{h} \perp})$ is identified
with $\Lambda R^{D^{h_{t}} \perp}$.
Thus, $h_{t}$ satisfies the basic heat equation 
 \[h_{t}^{-1}\frac{d h_{t}}{dt}\,=\,-\sqrt{-1}\Lambda R^{D^{h_{t}} \perp}\, . \]

By the formulas in Section \ref{se6.1}, for $h_{0}^{-1}h_{t}\,=\,\sigma_{t}$,
this equation can be written as
\[\left(\frac{d}{dt}+\Delta^{\prime}_{{h}_{0}}\right)\sigma_{t}\,=\,
-\sqrt{-1}\sigma_{t}\Lambda {R}^{{h}_{0} \perp}+\sqrt{-1}\Lambda D^{\prime\prime}(
\sigma_{t})\sigma^{-1}_{t}D^{\prime}_{h_{0}}(\sigma_{t})\, .
\]
 
By the same proof of \cite[Lemma 7.1]{SimC}, we have the formula
\[\frac{d}{dt}M(h_{0}, h_{t})\,=\,-\int_{M} \vert \Lambda R^{D^{h_{t}} \perp}\vert^{2}_{h_{t}}\, .
\] 

Now we assume that the Higgs bundle $(E,\, \theta)$ is stable.
Applying Proposition \ref{ESTm}, as done in \cite[Page 895]{SimC}, we can take a sequence
$\{t_{i}\} $ of time instances, with $t_{i}\,\to\, +\infty$, such that
$$\lim_{i\to +\infty}\int_{M} \vert \Lambda R^{D^{h_{t}} \perp}\vert^{2}_{h_{t_{i}}}\,=\,0$$
and $h_{t_{i}}\,\to\, h_{\infty}$ weakly in $L^{2}_{1}$.
By the basic Sobolev embedding theorem \cite[Theorem 2.6]{BH}, this $h_{t_{i}}$ is a Cauchy
sequence in $L^{1}$.
For a positive number $B$ such that $\vert \Lambda R^{D^{h_{0}}}\vert \,\le\, B$, by the
inequality $\Delta_{\xi} \log {\rm Tr} (\sigma)\,\le\, 2(\vert\Lambda R^{D^{k}}\vert_{k}+
\vert\Lambda R^{D^{h}}\vert_{h})$ in \eqref{eqinq}, we have that
 \[\Delta_{\xi} \log {\rm Tr}(h_{t_{i}}^{-1}h_{t_{j}})\,\le\, 2B\, ,
 \]
and hence Proposition \ref{Ass3} implies that $\log {\rm Tr}(h_{t_{i}}h_{t_{j}})\,\to\, 0$ in $C^{0}$.
Thus the convergence $h_{t_{i}}^{-1}\,\to\, h_{\infty}$ is in $C^{0}$.
By the $C^{0}$-convergence and the uniform boundedness of
$\vert\Lambda R^{D^{h_{t}}\perp}\vert_{h_{t}}$, as done in the arguments in the proof
of \cite[Lemma 19]{Don} (also \cite[Lemma 6.4]{SimC}), we
conclude that $h_{t_{i}}$ is actually bounded in $C^{1}$.
 
To complete the proof, we use the basic elliptic estimate and regularity explained in \cite[Section 2]{BH}.
For the transversely elliptic operator $\Delta_{h_{0}}^{\prime}$, as in \cite[Remark 2.2]{BH}, we
can non-canonically extend $\Delta_{h_{0}}^{\prime}$ to a second order differential operator
$$L\,:\, A^{0}(S(E))\,\longrightarrow\, A^{0}({\rm End}(E))\, .$$
For the linear operator $$\nabla^{h}_{\xi}\,:\,A^{0}(S(E))\,\longrightarrow\, A^{0}(S(E))$$ associated with the connection $\nabla^{h}$, and its formal adjoint $(\nabla^{h}_{\xi})^{\ast}$,
define the second order differential operator $$\Box\,:=\,
(\nabla^{h}_{\xi})^{\ast}\nabla^{h}_{\xi}+L\,:\, A^{0}(S(E))
\,\longrightarrow\, A^{0}({\rm End}(E))\, .$$
Then, by the transverse ellipticity of $\Delta_{h_{0}}^{\prime}$, the differential operator 
$\Box$ is elliptic. We have $\Box=\Delta_{h_{0}}^{\prime}$ on $A^{0}_{B_{\mathcal 
F_{\xi}}}(S(E))$. Therefore, by the
\begin{itemize}
\item elliptic estimate of the elliptic operator $\Box$,

\item the $C^{1}$-boundedness of $h_{t_{i}}$, and

\item the uniform boundedness of $\vert\Lambda 
R^{D^{h_{t}}\perp}\vert_{h_{t}}$,
\end{itemize}
we conclude that $h_{t_{i}}$ is bounded in $L^{p,2}$, and 
hence the convergence $h_{t_{i}}\to h_{\infty}$ is weakly in $L^{p,2}$. Thus, 
$R^{D^{h_{\infty}}}$ is defined, and $\Lambda R^{D^{h_{\infty}}\perp}\,=\,0$.

We shall prove that $h_{\infty}$ is a smooth basic Hermitian metric.
For that it is sufficient to show that $\sigma_{\infty}\,=\,
h_{0}^{-1}h_{\infty} \in A^{0}_{B_{\mathcal F_{\xi}}}(S(E))$.
We consider the Sobolev space $L^{p,k}(S(E))$.
Then, the basic Sobolev space $L^{p,1}_{B_{\mathcal F_{\xi}}}(S(E))$ is the closure of
$A^{0}_{B_{\mathcal F_{\xi}}}(S(E))$ in $L^{p,1}(S(E))$.
By the elliptic regularity for the elliptic operator $\Box$ (see \cite[Lemma 2.8]{BH}), we
conclude
that $$\sigma_{\infty} \,\in\, L^{p,1}_{B_{\mathcal F_{\xi}}}(S(E))\cap A^{0}(S(E))\, .$$

We have $A^{0}_{B_{\mathcal F_{\xi}}}(S(E))\,=\,{\rm kernel} (\nabla^{h})$ for the linear
differential operator
$\nabla^{h}_{\xi}\,:\,A^{0}(S(E))\,\longrightarrow\, A^{0}(S(E))$.
Extend $(\nabla^{h}_{\xi})^{p,1}\,:\,L^{p,1}(S(E))\,\longrightarrow\, L^{p,0}(S(E))$ continuously. 
Then we have $L^{p,1}_{B_{\mathcal F_{\xi}}}(S(E))\,\subset\, {\rm kernel}((\nabla^{h})^{p,1})$.
From the commutative diagram
 \[\xymatrix{A^{0}(S(E))\ar[r]\ar[d]^{\nabla^{h}_{\xi}}&L^{p,1}(S(E)) \ar[d]^{(\nabla^{h}_{\xi})^{p,1}}\\
A^{0}(S(E)) \ar[r]&L^{p,0}(S(E))
}
\]
where $A^{0}(S(E))\,\longrightarrow\, L^{p,1}(S(E))$ and $A^{0}(S(E))
\,\longrightarrow\, L^{p,0}(S(E))$ are the natural
inclusions, it follows that ${\rm kernel}((\nabla^{h}_{\xi})^{p,1})\cap A^{0}(S(E))\,\subset\,
{\rm kernel}(\nabla^{h}_{\xi})$. Thus we have $\sigma_{\infty} \,\in \, A^{0}_{B_{\mathcal F_{\xi}}}(S(E))$.
This completes the proof of Theorem \ref{HigHe}.

\subsection{Bogomolov--Miyaoka inequality}

The following Bogomolov--Miyaoka type inequality is derived from Theorem \ref{HigHe} just as
\cite[p.~878--879, Proposition 3.4]{SimC} is derived from \cite[p.~878, Theorem 1]{SimC}.

\begin{cor}\label{cor1}
Let $(E, \,\theta)$ be a polystable basic Higgs bundle of rank $r$ over a compact Sasakian manifold $(M,\, (T^{1,0},
\, S, \,I),\, (\eta, \,\xi))$ with $\dim M\,=\, 2n+1$. Then
$$
\int_{M} \left(2c_{2, B_{\mathcal F_{\xi}}}(E) -
\frac{r-1}{r}c_{1, B_{\mathcal F_{\xi}}}(E)^2\right)(d\eta)^{n-2}\wedge\eta \, \geq\, 0\, ,
$$
where $c_{i, B_{\mathcal F_{\xi}}}(E)$ is the $i$-th basic Chern class of $E$.
If the above inequality is an equality,, then $R^{D^{h} \perp}\, =\,0$.
\end{cor}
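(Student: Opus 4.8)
The plan is to derive this from Theorem \ref{HigHe} exactly as \cite[Proposition~3.4]{SimC} is derived from \cite[Theorem~1]{SimC} --- it is the same source that gives Proposition \ref{HtF}, only now one extracts an inequality rather than flatness --- using the basic Chern--Weil theory and transverse K\"ahler Hodge theory of Section \ref{se5} in place of the usual ones. First I would produce a \emph{single} basic Hermitian metric $h$ on $E$ with $\Lambda R^{D^{h}\perp}\,=\,0$. Write $(E,\,\theta)\,=\,\bigoplus_{i=1}^{k}(E_{i},\,\theta_{i})$ as in the definition of polystability, each summand stable of the common slope $\mu\,=\,{\rm deg}(E)/{\rm rk}(E)$. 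Apply Theorem \ref{HigHe} to $(E_{i},\,\theta_{i})$, but choose the background basic metric so that the metric it induces on the basic line bundle ${\rm det}(E_{i})$ is transversely Hermite--Einstein (such a metric exists, being the solution of a linear basic equation solvable by basic Hodge theory). Since the heat flow in Theorem \ref{Ellp} preserves the determinant, the metric $h_{i}$ furnished by Theorem \ref{HigHe} then satisfies $\sqrt{-1}\Lambda R^{D^{h_{i}}}\,=\,\gamma_{i}\,{\rm Id}_{E_{i}}$ with $\gamma_{i}$ \emph{constant}, and ${\rm deg}(E_{i})\,=\,\mu\,{\rm rk}(E_{i})$ forces $\gamma_{i}$ to depend only on $\mu$, hence to be independent of $i$. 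Consequently $h\,:=\,\bigoplus_{i}h_{i}$ is basic with $\sqrt{-1}\Lambda R^{D^{h}}\,=\,\gamma\,{\rm Id}_{E}$, so $\Lambda R^{D^{h}\perp}\,=\,0$; this is the metric named in the statement.

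Next I would rewrite the Chern classes. Basic Chern--Weil theory applied to the connection $D^{h}$ gives the identity of basic $4$-forms
\[
2c_{2, B_{\mathcal F_{\xi}}}(E)-\frac{r-1}{r}\,c_{1, B_{\mathcal F_{\xi}}}(E)^{2}\,=\,\frac{1}{4\pi^{2}}\,{\rm Tr}\bigl(R^{D^{h}\perp}\wedge R^{D^{h}\perp}\bigr),
\]
a purely algebraic consequence of expanding ${\rm det}\bigl(I-\frac{R^{D^{h}}}{2\pi\sqrt{-1}}\bigr)$ and using ${\rm Tr}(R^{D^{h}\perp})\,=\,0$ and ${\rm Tr}(R^{D^{h}\perp}\wedge R^{D^{h}\perp})\,=\,{\rm Tr}(R^{D^{h}}\wedge R^{D^{h}})-\frac{1}{r}\,{\rm Tr}(R^{D^{h}})\wedge{\rm Tr}(R^{D^{h}})$. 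Thus it remains to show that the wedge of the right-hand side with $(d\eta)^{n-2}\wedge\eta$ integrates to a nonnegative number over $M$, with equality only if $R^{D^{h}\perp}\,=\,0$.

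For this I would decompose $R^{D^{h}\perp}$ into its components of transverse bidegree $(2,0)$, $(1,1)$ and $(0,2)$. The $(2,0)$ and $(0,2)$ parts are $d\eta$-primitive for degree reasons, while the $(1,1)$ part is $d\eta$-primitive precisely because $\Lambda R^{D^{h}\perp}\,=\,0$. Upon wedging with $(d\eta)^{n-2}\wedge\eta$ and integrating, only the transverse $(2,2)$-part of ${\rm Tr}(R^{D^{h}\perp}\wedge R^{D^{h}\perp})$ survives, namely ${\rm Tr}\bigl((R^{D^{h}\perp})^{1,1}\wedge(R^{D^{h}\perp})^{1,1}\bigr)+2\,{\rm Tr}\bigl((R^{D^{h}\perp})^{2,0}\wedge(R^{D^{h}\perp})^{0,2}\bigr)$. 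Now I would combine the Lefschetz decomposition and Hodge--Riemann bilinear relations for basic forms relative to $d\eta$ (which follow from the transverse K\"ahler identities of Section \ref{se5}; cf. \cite{KT}, \cite{EKA}) with the facts that $(R^{D^{h}\perp})^{1,1}$ is $h$-skew-Hermitian --- here one uses $\overline\partial_{E}\theta\,=\,0$ to see that the would-be $(1,1)$-part of $d^{\nabla^{h}}(\theta+\overline\theta_{h})$ vanishes --- and that $(R^{D^{h}\perp})^{0,2}$ is, up to sign, the $h$-adjoint of $(R^{D^{h}\perp})^{2,0}$. Taking the ${\rm End}(E)$-trace then turns both summands into nonnegative multiples of $L^{2}$-norms, so that
\[
\int_{M}{\rm Tr}\bigl(R^{D^{h}\perp}\wedge R^{D^{h}\perp}\bigr)\,(d\eta)^{n-2}\wedge\eta\,=\,C_{1}\Vert (R^{D^{h}\perp})^{1,1}\Vert^{2}+C_{2}\Vert (R^{D^{h}\perp})^{2,0}\Vert^{2}\,\geq\,0
\]
with $C_{1},\,C_{2}\,>\,0$. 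With the Chern--Weil identity this is the asserted inequality, and equality forces $(R^{D^{h}\perp})^{1,1}\,=\,0\,=\,(R^{D^{h}\perp})^{2,0}$, hence $R^{D^{h}\perp}\,=\,0$.

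I expect the one genuinely delicate point to be the sign bookkeeping in the last step: Weil's formula for the Hodge star on primitive $(p,q)$-forms gives \emph{opposite} signs for the scalar $(1,1)$ and $(2,0)$ contributions, and one must check that the ${\rm End}(E)$-trace together with the $h$-adjoint relation compensates, so that both pieces enter with the same sign. This is precisely the computation of \cite[Proposition~3.4]{SimC}, and it transfers verbatim once the transverse Lefschetz and Hodge--Riemann machinery of Section \ref{se5} is in hand; the integration-by-parts used to discard exact terms is legitimate because a basic form of degree exceeding the transverse dimension $2n$ vanishes, so $\int_{M}(d\gamma)\wedge\eta\,=\,0$ for every basic form $\gamma$. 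Everything analytic is already contained in Theorems \ref{HigHe} and \ref{Ellp}, so no new estimates are needed.
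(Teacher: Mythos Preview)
Your proposal is correct and follows precisely the route the paper indicates: derive the inequality from Theorem \ref{HigHe} by transplanting Simpson's proof of \cite[Proposition~3.4]{SimC} to the basic setting. The paper gives no further details beyond that one-line reference, so your expansion --- reducing polystable to stable via the determinant normalization, the Chern--Weil identity for $2c_{2}-\frac{r-1}{r}c_{1}^{2}$, the bidegree decomposition of $R^{D^{h}\perp}$, and the Hodge--Riemann sign check --- is exactly what a reader would have to supply.
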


\section{Correspondence between flat bundles and Higgs bundles}

\begin{prop}[{See \cite[Theorem 4.7]{BH}}]\label{Porpf}
Let $(E,\, \theta)$ be a basic Higgs bundle over a compact Sasakian manifold
$(M,\, (T^{1,0},\, S,\, I),\, (\eta,\, \xi))$ with ${\rm deg}(E)=0$. 
Suppose that $h$ is a basic Hermitian metric
on $E$ with $\Lambda R^{D^{h} }\,=\,0$.
Then $(E,\, \theta)$ is a direct sum of stable basic Higgs bundles of degree zero.
\end{prop}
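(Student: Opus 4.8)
The plan is to follow the Hermitian-Einstein decomposition argument of Simpson's \cite[Corollary 3.8]{SimC} (which itself adapts the Kähler case), transferred verbatim to the basic/transverse setting using the transverse Hodge theory and the Riemann bilinear relations for basic forms already established above. First I would reduce to the case where $(E,\,\theta)$ is \emph{not} stable: otherwise there is nothing to prove. Since $\deg(E)=0$ and $h$ satisfies $\Lambda R^{D^h}=0$, any sub-Higgs sheaf $\mathcal V\subset\mathcal O_{B_{\mathcal F_\xi}}(E)$ with torsion-free quotient and $\mathrm{rk}(\mathcal V)<\mathrm{rk}(E)$ automatically satisfies $\deg(\mathcal V)\le 0$; this is the basic-cohomology Chern-Weil computation that expresses $\deg(\mathcal V)$ as an integral of $|D''_{E,\theta}\Pi|^2$ plus a curvature term that vanishes because $\Lambda R^{D^h}=0$ (exactly the formula for $\deg(\Pi)$ displayed in the proof of Proposition \ref{ESTm}). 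So non-stability forces the existence of a \emph{destabilizing} sub-Higgs sheaf of degree exactly $0$.

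The key step is then to promote such a degree-zero sub-Higgs sheaf to a \emph{$\theta$-invariant, $\nabla^h$-parallel holomorphic orthogonal splitting} $E\,=\,E_1\oplus E_2$. Concretely: the vanishing $\deg(\mathcal V)=0$ together with $\Lambda R^{D^h}=0$ forces, in the integral formula above, that $D''_{E,\theta}\Pi=0$, where $\Pi$ is the $L^{2,1}_B$ orthogonal projection onto $\mathcal V$ produced by Theorem \ref{TrUY}; this in turn makes $\Pi$ a \emph{smooth} basic self-adjoint projection commuting with $\overline\partial_E$, with $\theta$, and (by taking adjoints, using that $h$ is Hermitian--Yang--Mills) also with $\partial_{E,h}$ and $\overline\theta_h$, hence with the full canonical connection $D^h$. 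Thus $\Pi$ splits $(E,\,\theta,\,h,\,D^h)$ transversely holomorphically, orthogonally, and compatibly with the Higgs field. Each summand inherits $\deg=0$ and $\Lambda R^{D^{h_i}}=0$, so one induces on $\mathrm{rk}(E)$: either the summand is stable, or it splits further; the induction terminates since the rank drops. Finally one checks each stable piece indeed has degree zero, which is immediate from $\deg E_i=0$.

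I expect the main obstacle to be the regularity/smoothness of the projection $\Pi$: a priori $\Pi$ is only an $L^{2,1}_B$ transversely weakly holomorphic subbundle, and upgrading the conditions $\Pi=\Pi^2$, $(\mathrm{Id}-\Pi)\overline\partial_{\mathrm{End}(E)}\Pi=0$, and $D''_{E,\theta}\Pi=0$ to honest smoothness (so that one genuinely gets a transversely holomorphic subbundle and a $C^\infty$ orthogonal splitting) requires the basic elliptic regularity machinery of \cite[Section 2]{BH}, applied to $\Box$ as in the proof of Theorem \ref{HigHe} — precisely the same bootstrap $L^{2,1}_B\hookrightarrow L^{p,k}_B\hookrightarrow A^0_{B_{\mathcal F_\xi}}$ used at the end of that proof. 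Everything else is the purely algebraic consequence (commutation of a parallel projection with all the operators in sight) and an induction on rank, mirroring \cite[Corollary 3.8, Lemma 3.2]{SimC}; the one place where Sasakian/transverse geometry enters nontrivially is, as throughout this paper, the need to know that the degree and all the Chern--Weil integrands are basic, which was arranged in Section \ref{se5}.
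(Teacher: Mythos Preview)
Your proposal is correct and follows essentially the same route as the paper's proof: both start from a destabilizing sub-Higgs sheaf, use the Chern--Weil formula $\deg(\mathcal V)=\sqrt{-1}\int_M\mathrm{Tr}(\pi\Lambda R^{D^h})-\int_M|D''_{E,\theta}\pi|^2$ together with $\Lambda R^{D^h}=0$ to force $\deg(\mathcal V)=0$ and $D''_{E,\theta}\pi=0$, then obtain $D'^{,h}_{E,\theta}\pi=0$ from self-adjointness of $\pi$, upgrade $\pi$ to a smooth basic section via the elliptic regularity bootstrap from the end of the proof of Theorem~\ref{HigHe}, and finish by orthogonally splitting and inducting on rank. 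You have correctly anticipated that the only nontrivial point is the regularity step, and the machinery you name (from \cite[Section~2]{BH}) is precisely what the paper invokes.
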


\begin{proof}
Assume that $(E,\, \theta)$ is not stable.
Then there exists a 
sub-Higgs sheaf ${\mathcal V}$ of $(E,\, \theta)$ such that
\begin{itemize}
\item ${\rm rk} (\mathcal V)\,<\,{\rm 
rk}(E)$ with ${\mathcal O}_{B_{\mathcal F_{\xi}}}(E)/\mathcal V$ is torsion-free, and 

\item the inequality
\[{\rm deg}(\mathcal V)\,\ge\, 0
\]
holds.
\end{itemize}
Let $\pi$ be the projection to the transversely holomorphic subbundle $V\,\subset\, E$ defined 
almost everywhere using $\mathcal V$, constructed as in \cite[Proposition 3.21]{BH} associated
with the Hermitian 
metric $h$. Then, $\pi\in L^{2,1}_{B}(S(E))$, and we have the Chern-Weil formula (\cite[Lemma 
3.2]{SimC})
\[{\rm deg}(\mathcal V)\,=\,
\sqrt{-1}\int_{M}{\rm Tr}(\pi\Lambda R^{D^{h}})-\int_{M}\vert D^{\prime\prime}_{E,\theta}\pi\vert^{2}\, .
\]
By $\Lambda R^{D^{h} }\,=\,0$, we have that ${\rm deg}(\mathcal V)\,=\,0$
and $D^{\prime\prime}_{E,\theta}\pi\,=\,0$.
By $\pi\,\in\, L^{2,1}_{B}(S(E))$, we also have that $D^{\prime,h}_{E,\theta}\pi\,=\,0$.
Using the elliptic regularity it follows that
$\pi\,\in\, L^{p,1}_{B_{\mathcal F_{\xi}}}(S(E))\cap A^{0}(S(E))$, and hence
$\pi\,\in \, A^{0}_{B_{\mathcal F_{\xi}}}(S(E))$ by the same argument as in the last part of
the proof of Theorem \ref{HigHe}.

Using $D^{\prime\prime}_{E,\theta}\pi\,=\,0$ and $D^{\prime,h}_{E,\theta}\pi\,=\,0$, it can be 
seen that $\pi$ is the projection to a globally defined basic Higgs sub-bundle $V\,\subset\, 
E$ of $(E,\, \theta)$. Thus have the direct sum decomposition
$E\,=\,V\oplus V^{\perp}$ of basic Higgs bundles that satisfies the condition that
${\rm deg}( V)\,=\,0\,=\, {\rm deg}( V^{\perp})$. Restricting $h$ to $V$ and $V^{\perp}$,
and repeating the arguments the proposition can now be proved inductively.
\end{proof}
 
In view of Theorem \ref{thm1}, Theorem \ref{HigHe}, Proposition \ref{Porpf} and Proposition 
\ref{HtF}, on a compact Sasakian manifold $(M,\, (T^{1,0},\, S,\, I),\, (\eta,\, \xi))$, we 
have a bijective correspondence between
\begin{itemize}
\item the semi-simple flat bundles $(E,\,\nabla_{E})$, and

\item the polystable basic Higgs bundles $(E,\,\theta)$ over $M$
with $c_{1, B_{\mathcal F_{\xi}}}(E)\,=\,0\,=\, c_{2, B_{\mathcal F_{\xi}}}(E)$.
\end{itemize}
via harmonic metrics $h$. It should be clarified that the transversely holomorphic structure
of the vector bundle underlying $(E,\,\nabla_{E})$ is in general different from the
transversely holomorphic structure of the vector bundle underlying the corresponding
basic Higgs bundle $(E,\,\theta)$; however the underlying $C^\infty$ vector bundles
coincide.

In the above correspondence, considering the basic de Rham complex $(A^{\ast}_{B_{\mathcal 
F_{\xi}}}(M,E), \,d_{E})$ with values in a semi-simple flat bundle $(E,\,\nabla^{E})$, we have the 
decomposition $$d_{E}\,=\,D^{\prime\prime}_{E,\theta}+D^{\prime,h}_{E,\theta}\, .$$ Denote by 
$$H^{\ast}_{dR, B_{\mathcal F_{\xi}}}(M,\,E)\ \ \text{ and }\ \
H^{\ast}_{Dol, B_{\mathcal F_{\xi}}}(M,\,E)$$ the 
cohomologies of complexes $(A^{\ast}_{B_{\mathcal F_{\xi}}}(M,E),\, d_{E})$ and 
$(A^{\ast}_{B_{\mathcal F_{\xi}}}(M,\,E),\, D^{\prime\prime}_{E,\theta})$ respectively. By the 
K\"ahler identities, we have an isomorphism
\[H^{\ast}_{dR, B_{\mathcal F_{\xi}}}(M,\,E)\,\cong\, H^{\ast}_{Dol, B_{\mathcal F_{\xi}}}(M,\,E)
\]
by the transverse Hodge theory of usual basic cohomology (see \cite{EKA}).
In particular, a section $\varphi\,\in\, A^{\ast}_{B_{\mathcal F_{\xi}}}(M,\,E)$ is flat (i.e.,
$\nabla^{E}\varphi \,=\,0$) if and only if $D^{\prime\prime}_{E,\theta}\varphi\,=\,0$.
 
As done in \cite[Corollary 1.3]{SimL}, we now obtain the following result.

\begin{thm}\label{thml}
Let $(M,\, (T^{1,0},\, S,\, I),\, (\eta,\, \xi))$ a compact Sasakian manifold. Then
there is an equivalence between the category of semi-simple flat vector bundles over $M$ and the 
category of polystable basic Higgs bundles over $M$
with trivial first and second basic Chern classes.
\end{thm}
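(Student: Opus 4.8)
The plan is to assemble Theorem~\ref{thml} by packaging the bijective correspondence already established into a statement about categories, precisely mirroring the argument in \cite[Corollary~1.3]{SimL}. Everything analytic has been done: Theorem~\ref{thm1} and Theorem~\ref{harmsa} produce, from a semi-simple flat bundle $(E,\nabla^E)$ with harmonic metric $h$, the basic Higgs bundle $(E,\theta)$ with $\theta=\theta^{1,0}_{h,\xi}$, and (together with Proposition~\ref{HtF}) show the resulting $(E,\theta)$ is polystable with vanishing first and second basic Chern classes; conversely Theorem~\ref{HigHe}, Proposition~\ref{Porpf} and Proposition~\ref{HtF} send a polystable basic Higgs bundle with trivial basic Chern classes to a flat bundle carrying a Yang--Mills--Higgs (hence, after the degree and Chern class vanishing, flat) metric, which is semi-simple. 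So the objects correspond; what remains is to upgrade this to an equivalence of categories, i.e. to match morphisms.

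The key steps, in order, are as follows. First I would define the two categories carefully: objects of the first are pairs $(E,\nabla^E)$ with $\nabla^E$ a semi-simple flat connection, and morphisms are flat bundle maps (i.e. $C^\infty$ bundle homomorphisms $f\colon E_1\to E_2$ with $\nabla^{E_2}\circ f = f\circ\nabla^{E_1}$); objects of the second are polystable basic Higgs bundles $(E,\theta)$ with $c_{1,B_{\mathcal F_\xi}}(E)=0=c_{2,B_{\mathcal F_\xi}}(E)$, and morphisms are basic holomorphic maps $f$ commuting with the Higgs fields, $\theta_2\circ f=f\circ\theta_1$ (as maps $E_1\to E_2\otimes\Omega_{B_{\mathcal F_\xi}}$). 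Second, I would construct the two functors on objects using the harmonic metric as above, noting (this is the content of Theorem~\ref{thm1}) that the flat connection and the Higgs operator $D''_{E,\theta}=\overline\partial_E+\theta$ are built from the \emph{same} data $(\nabla^h,\phi_h)$ via $d_E=D''_{E,\theta}+D'^{,h}_{E,\theta}$ and $\phi_h=\theta+\overline\theta_h$.

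Third, and this is the heart of the matter, I would prove the functors act correctly on morphisms. The clean way, following Simpson, is to interpret a morphism $f\colon (E_1,\nabla^{E_1})\to(E_2,\nabla^{E_2})$ as a flat section of $\mathrm{Hom}(E_1,E_2)=E_1^{\ast}\otimes E_2$, which is itself semi-simple and carries the induced harmonic metric; by the last displayed assertion in the excerpt ($\nabla^E\varphi=0 \iff D''_{E,\theta}\varphi=0$ for basic sections, via the basic Kähler identities), flat sections of $\mathrm{Hom}(E_1,E_2)$ coincide with $D''$-closed, i.e. Higgs-bundle, morphisms $(E_1,\theta_1)\to(E_2,\theta_2)$ in degree zero. (One must check that a morphism is automatically basic: since a flat section of a semi-simple flat bundle is parallel, it is in particular parallel along $\xi$, hence basic by the flat-frame description of basicness in Section~4; on the Higgs side one invokes the corresponding $D''$-parallelism together with $\phi_h(\xi)=0$ from Theorem~\ref{harmsa}.) This gives a natural bijection on Hom-sets, which together with the object-level correspondence and the additive (respecting direct sums) nature of both constructions yields the equivalence. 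Finally, I would record that the two functors are quasi-inverse, which is immediate because on objects they are inverse up to the canonical identification of underlying $C^\infty$ bundles and on morphisms both are the identity under the flat-section interpretation.

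The main obstacle I anticipate is the morphism-matching step, specifically verifying that the isomorphism $H^{0}_{dR,B_{\mathcal F_\xi}}(M,\mathrm{Hom}(E_1,E_2))\cong H^{0}_{Dol,B_{\mathcal F_\xi}}(M,\mathrm{Hom}(E_1,E_2))$ — which the excerpt states for a single semi-simple flat bundle via basic transverse Hodge theory (\cite{EKA}) and the basic Kähler identities — applies to $\mathrm{Hom}(E_1,E_2)$ with its harmonic metric and that the degree-zero piece really does capture all morphisms of Higgs bundles (not just those of the specific shape coming from harmonic-metric compatibility). This requires knowing that $\mathrm{Hom}$ of two semi-simple flat bundles is again semi-simple (standard: the tensor/dual of semisimple representations of $\pi_1$ is semisimple, or directly, the harmonic metric on $E_1^\ast\otimes E_2$ is harmonic), and that the harmonic-metric functor is compatible with $\mathrm{Hom}$, i.e. the Higgs field on $\mathrm{Hom}(E_1,E_2)$ induced by the harmonic metric equals the one induced functorially from $\theta_1,\theta_2$ — a compatibility that holds because all the operators $(\nabla^h,\phi_h)$ are built tensorially from those on $E_1$ and $E_2$. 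Once this bookkeeping is in place, the theorem follows formally, exactly as \cite[Corollary~1.3]{SimL} follows from the Kähler correspondence.
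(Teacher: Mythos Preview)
Your proposal is correct and follows essentially the same route as the paper: the paper establishes the object-level bijection via Theorem~\ref{thm1}, Theorem~\ref{HigHe}, Proposition~\ref{Porpf} and Proposition~\ref{HtF}, then records the basic K\"ahler-identity isomorphism $H^{\ast}_{dR,B_{\mathcal F_\xi}}(M,E)\cong H^{\ast}_{Dol,B_{\mathcal F_\xi}}(M,E)$ together with the degree-zero consequence $\nabla^E\varphi=0\Leftrightarrow D''_{E,\theta}\varphi=0$, and simply invokes \cite[Corollary~1.3]{SimL} for the categorical upgrade. Your write-up spells out exactly that Simpson argument---applying the degree-zero cohomology isomorphism to $\mathrm{Hom}(E_1,E_2)$ to match flat morphisms with Higgs morphisms---so there is no divergence in approach.
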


\section*{Acknowledgements}

We thank T. Mochizuki for pointing out Corollary \ref{cor1}. We thank
D. Baraglia for his comments. The first-named author is
partially supported by a J. C. Bose Fellowship.
The second-named author is was partially supported by JSPS Grant-in-Aid for Scientific Research Project/Area Number	19H01787.

\end{document}